\documentclass[11pt]{article}
\usepackage{amsmath,amsfonts,amssymb,amsthm}
\usepackage{mathrsfs}
\usepackage{latexsym}
\usepackage[english]{babel}
\usepackage{bbm}

\usepackage{graphicx}
\usepackage[usenames,dvipsnames]{xcolor}


\usepackage{tikz}
\usepackage{enumitem}
\usepackage{multicol}

\definecolor{dkred}  {rgb}{0.58,0.05,0.3}
\definecolor{pinegreen}{rgb}{0.0, 0.47, 0.44}
\definecolor{mygreen}{rgb}{0.09, 0.45, 0.27}

\renewenvironment{proof}[1][\proofname]{{\bfseries #1.} }{\qed}

\setcounter{MaxMatrixCols}{10}
\def\<{\langle}
\def\>{\rangle}

\def\Cov{{\rm Cov\,}}

\voffset-.7in \hoffset-.7in
\textwidth6in \textheight9.1in
\parindent1em
\newcommand{\field}[1]{\mathbb{#1}}

\newcommand{\DD}{\mathbb{D}}
\newcommand{\Leb}{\mathrm{Leb}}

\newcommand{\R}{\field{R}}

\newcommand{\X}{\tilde X}

\newcommand{\N}{\field{N}}

\newcommand{\C}{\field{C}}

\newcommand{\Var}{{\rm Var}}

\newcommand{\G}{{\mathcal G}}
\newcommand{\e}{{\rm e}}
\newcommand{\F}{{\mathscr{F}}}

\newcommand{\B}{{\mathscr B}}

\newcommand{\var}{\operatorname{Var}}

\def\C{{\mathcal{C}}}
\def\dTV{{d_{\mathrm{TV}}}}
\def\dW{{d_{\mathrm{W}}}}
\def\SSd{{\mathbb{S}^d}}
\def\SS2{{\mathbb{S}^2}}

\def\E{{\mathbb{ E}}}

\def\P{{\mathbb{P}}}
\def\F{{\mathscr{F}}}

\def\1{{\mathbbm{1} }}
\newtheorem{theorem}{Theorem}[section]

\newtheorem{proposition}[theorem]{Proposition}
\newtheorem{lemma}[theorem]{Lemma}

\newtheorem{definition}[theorem]{Definition}

\newtheorem{assumption}[theorem]{Assumption}

\newtheorem{remark}[theorem]{Remark}

\numberwithin{equation}{section}

\begin{document}
	
\title{Convergence in Total Variation for nonlinear functionals of random hyperspherical harmonics}
\author{
	{\sc Lucia Caramellino}\thanks{%
	Dipartimento di Matematica,
	Universit\`a di Roma Tor Vergata, and INdAM-GNAMPA - {\tt caramell@mat.uniroma2.it}}\\
	{\sc Giacomo Giorgio}\thanks{%
	Dipartimento di Matematica,
	Universit\`a di Roma Tor Vergata, and INdAM-GNAMPA - {\tt giorgio@mat.uniroma2.it}}\\
	{\sc Maurizia Rossi}\thanks{%
Dipartimento di Matematica e Applicazioni, Universit\`a  di Milano-Bicocca, and INdAM-GNAMPA  - {\tt maurizia.rossi@unimib.it}}
}
\maketitle

\begin{abstract}
Random hyperspherical harmonics are Gaussian Laplace eigenfunctions on the unit $d$-dimensional sphere ($d\ge 2$). We study the convergence in Total Variation distance for their nonlinear statistics in the high energy limit, i.e., for diverging sequences of Laplace eigenvalues. Our approach takes advantage of a recent result by Bally, Caramellino and Poly (2020): combining the Central Limit Theorem in Wasserstein distance obtained by Marinucci and Rossi (2015) for Hermite-rank $2$ functionals with new results on the asymptotic behavior of their Malliavin-Sobolev norms, we are able to establish second order Gaussian fluctuations in this stronger probability metric as soon as the functional is regular enough. Our argument requires some novel estimates on moments of products of Gegenbauer polynomials that may be of independent interest, which we prove via the link between graph theory and diagram formulas.
\end{abstract}

\smallskip

\noindent
Mathematics Subject Classification 60G60, 60B10, 42C10, 60H07.

\tableofcontents

\section{Introduction}

Random hyperspherical harmonics $\lbrace T_\ell \rbrace_{\ell\in \mathbb N}$ are Gaussian Laplace eigenfunctions on the unit $d$-dimensional sphere $\mathbb S^d$ ($d\ge 2$). They are the Fourier components of isotropic Gaussian spherical random fields,
 therefore used in a wide range of disciplines;
in particular, for $d=2$ they play a key role in cosmology -- in connection with the analysis of the Cosmic Microwave Background radiation data -- as well as in medical imaging and atmospheric sciences, see \cite[Chapter 1]{MPbook} for more details. For these reasons, in the last years the investigation of their geometry received a great attention, in particular the asymptotic behavior, for large eigenvalues (as $\ell\to +\infty$), of their nonlinear statistics $\lbrace \X_\ell\rbrace_{\ell\in \mathbb N}$, see \cite{MW11, MW14, ROS20, Dur16, CM18, Tod19, MRW20} and the references therein. 
The main goal of most of these papers is to study first and second order fluctuations for $\X_\ell$ to be some geometric functional of the excursion sets of $T_\ell$, such as the so-called Lipschitz-Killing curvatures
\cite[Section 6.3]{AT} that in dimension $2$ are the area, the boundary length and the Euler-Poincar\'e characteristic. Hence it is clear that $\X_\ell$ may be a function of the sole $T_\ell$ (in the case of the excursion measure for instance) or a function of $T_\ell$ and its derivatives. 

The above mentioned references take advantage of Wiener-It\^o theory, the random variables $\lbrace \X_\ell \rbrace_{\ell\in \mathbb N}$ being square integrable functionals of Gaussian fields. In this framework, the techniques developed allow one to establish Central Limit Theorems (CLTs) via a powerful combination of chaotic expansions and fourth moment theory by Nourdin and Peccati \cite{NP12}. It is well known that the link between Malliavin calculus and Stein's method established by these two authors permits to get estimates on the rate of convergence to the limiting Gaussian law in various probability metrics \cite[Appendix C.2]{NP12}, at least when a finite number of chaoses are involved. For general functionals instead, the so-called second order Poincar\'e inequality \cite{NPR09} may be evoked, even in its improved version \cite{Vid19}.   

However, the existing results in the literature for the above mentioned geometric functionals $\lbrace \X_\ell\rbrace_{\ell\in \mathbb N}$ (which do have an infinite chaos expansion) of random hyperspherical harmonics $\lbrace T_\ell \rbrace_{\ell\in \mathbb N}$ only deal with the Wasserstein distance, see e.g. \cite{ROS20, CM18, Ros19}. The typical situation is a single chaotic component dominating the whole series expansion, entailing the Wasserstein distance to be controlled by the square root of the fourth cumulant of this leading term plus the $L^2(\mathbb P)$-norm of the series tail. Moreover, generally there are no information on the optimal speed of convergence.

A natural question is whether or not these results could be upgraded to stronger probability metrics. Here we address this issue, indeed we are interested in quantitative CLTs in Total Variation distance \cite[Section C.2]{NP12} for nonlinear statistics $\lbrace \X_\ell\rbrace_{\ell\in \mathbb N}$ of random hyperspherical harmonics $\lbrace T_\ell\rbrace_{\ell\in \mathbb N}$ in the high energy limit (as $\ell\to+\infty$). We are able to solve the problem for integral functionals of the sole $T_\ell$, that are regular enough in the Malliavin sense, by taking advantage of a recent result in \cite{BCP19}. 
In this paper, the authors prove some regularization lemmas that enable one to upgrade the distance of convergence from smooth Wasserstein to Total Variation (in a quantitative way) for any sequence of random variables which are smooth and non-degenerate in some sense. The price to pay is to control the smooth Wasserstein distance between the sequence of their Malliavin covariance matrices and its limit, that however does \emph{not} need to be the Malliavin covariance  matrix of the limit. Remarkably, this technique requires neither the sequence of random variables of interest to be functionals of a Gaussian field nor the limit law to be Normal, situations that naturally occur since the underlying randomness may be not Gaussian \cite{BCPzeri, CNN20} or related functionals may show non-Normal second order fluctuations \cite{MPRW16}.

%

Let us write down explicitly our functional of interest: we consider
	\begin{equation*}
	\X_{\ell}=\frac{X_\ell-\E[X_\ell]}{\sqrt{\var(X_\ell)}} \quad\mbox{where}\quad X_\ell := \int_{\mathbb S^d} \varphi(T_\ell(x)) dx,
	\end{equation*}
 	$\varphi:\mathbb R\to \mathbb R$ being square integrable w.r.t. the Gaussian density.  In \cite{ROS20}, the authors prove that, under mild assumptions, the above functional $\X_{\ell}$ converges in Wasserstein distance towards a Gaussian random variable as $\ell\to +\infty$ ;
 in order to strengthen this result, in light of \cite{BCP19}, we need to investigate the asymptotic behavior of the Malliavin covariance of $\X_\ell$, that we denote by $\sigma_\ell$. Under some additional regularity properties on the function $\varphi$ which are needed to ensure the existence of Malliavin derivatives of $\X_\ell$, we are able to prove the convergence in Wasserstein distance of $\sigma_\ell$ towards a non-degenerate deterministic limit, that together with the uniform boundedness of Malliavin-Sobolev norms of $\X_\ell$ guarantees the convergence in Total Variation distance for $\X_\ell$. 
To the best of our knowledge, ours is the first quantitative Limit Theorem in Total Variation distance for nonlinear functionals of random hyperspherical harmonics having an infinite chaotic expansion, generalizing in particular the work 
\cite{ROS20}. 

As a bonus, we gain some new results on the asymptotic behavior of Malliavin derivatives of these functionals, and some novel estimates on the moments of products of powers of Gegenbauer polynomials (the latter describing the covariance structure of the random hyperspherical harmonics $\lbrace T_\ell\rbrace_{\ell\in \mathbb N}$) thus extending some formulas in \cite{Mar08, Ros19}. 
For our investigation we exploit an explicit link between the diagram formula for moments of Hermite polynomials and the graph theory, inspired by \cite{Mar08}. In particular, we extrapolate a graph from each of these diagrams and use the fact that every connected graph can be covered by a tree, eventually studying only the contribution coming from these trees. In order to make the reading pleasant and smooth, we collect the proofs of these key results on Gegenbauer integrals in Appendix \ref{Diag} and Appendix \ref{sect:graph}.

Finally, it is worth stressing that in the context of Gaussian approximations for random variables that are functionals of an underlying Gaussian field, the second order Poincar\'e inequality by Vidotto \cite{Vid19}  has led to quantitative CLTs for nonlinear functionals of stationary Gaussian fields related to
the Breuer-Major theorem, with
presumably optimal rates of convergence in Total Variation distance. However, we choose to exploit the technique developed in \cite{BCP19} with a view to a subsequent generalization of our result to the interesting case of random eigenfunctions of the standard flat torus (arithmetic random waves), where the attainable limit laws include linear combinations of independent chi-square distributed random variables \cite{MPRW16, Cam19}. Moreover, it turns out that in order to obtain fruitful bounds via the second order Poincar\'e inequality for the Gaussian approximation of our functional of interest $\tilde X_\ell$, the estimates on moments of products of powers of Gegenbauer polynomials should be much finer than those required by the approach developed in \cite{BCP19} (the one that we follow).

\subsection{Notation} 

Throughout this manuscript we denote with $\nu$ the standard Gaussian law on $\R$ and with $Z\sim \mathcal N(0,1)$ a standard Gaussian random variable (r.v.).  When we will speak about Malliavin calculus and chaos expansion based on $Z$, we just intend the classical one dimensional approach in the space $L^2(\nu):=L^2(\R,\B(\R), \nu)$ (see e.g. \cite[Chapter 1]{NP12}), where $\B(\R)$ denotes the Borel-$\sigma$ field on the real line. In particular, we will denote by $L$ and $D^ k$ (for integers $k\ge 1$) the Ornstein-Uhlenbeck operator and  the $k$th order Malliavin derivative, respectively. As usual, $Dom(L)$ and $\DD^ {k,p}$ (for $p\geq 1$) will stand, respectively, for the set of random variables measurable w.r.t. $\sigma(Z)$ on which $L$ is well defined and that are derivable in Malliavin sense up to order $k$, whose derivatives all belong to $L^p(\mathbb P):=L^p(\Omega, \mathcal F, \mathbb P)$. Here and in what follows $(\Omega, \mathcal F, \mathbb P)$ will denote a probability space and without loss of generality we may assume the random objects in this paper are defined on this common probability space.

\emph{Conventions.} In this paper we set $\mathbb N:=\lbrace 0,1, 2\dots \rbrace$ and $\N^*=\N\setminus \{0\}$. Given two sequences of positive numbers $\{a_n\}_{n\in \mathbb N}$ and $\{b_n\}_{n\in \mathbb N}$, we write $a_n\sim b_n$ if $\lim_{n\to\infty}\frac {a_n}{b_n}=1$,
$a_n= O(b_n)$ if $\{\frac {a_n}{b_n}\}_n$ is asymptotically bounded and $a_n= o(b_n)$ if $\lim_{n\to\infty}\frac {a_n}{b_n}=0$. 

\subsection*{Acknowledgements} The authors would like to thank Domenico Marinucci for useful discussions. 
L.C. and G.G. acknowledge support from the MIUR \emph{Excellence} Project MatMod@TOV awarded to the Department of Mathematics, University of Rome ``Tor Vergata'', CUP E83C23000330006, and the Project \textit{Asymptotic Properties in Probability}, CUP E83C22001780005. 
The research of M.R. has been supported by the ANR-17-CE40-0008 Project \emph{Unirandom}.
L.C. and M.R. acknowledge support from the Beyond Borders Project  AMP CUP E89C20000680005. 
\section{Motivations and main results}

Let $d\geq 2$ be a positive integer, we denote by $\mathbb S^d\subset \R^{d+1}$ the $d$-dimensional unit sphere. Accordingly, we set $\B(\mathbb S^d)$ to be its Borel $\sigma$-field and we write $\Leb(dx)=dx$ for the Lebesgue measure on $(\mathbb S^d, \B(\mathbb S^d))$. It is known that $\int_{\mathbb S^d} dx =\mu_d$ where 
$$
\mu_d=\frac{2\pi^{\frac{d+1}{2}}}{\Gamma(\frac{d+1}{2})},
$$
$\Gamma$ being the Gamma function. Let $\Delta_{\mathbb S^d}$ denote the spherical Laplacian operator on $\mathbb S^d$, we are interested in non-trivial solutions of the Helmholtz equation
\begin{equation}\label{helmholtz}
\Delta_{\mathbb S^d} f =- \lambda f,
\end{equation}
where $f:\mathbb S^d \to \mathbb R$ and $\lambda\ge 0$. It is known that the Laplacian eigenvalues are of the form $-\lambda = -\lambda_{\ell;d} = -\ell(\ell + d-1)$ for $\ell\in \mathbb N$, and the dimension of the $\ell$-th eigenspace is, for $\ell \in \mathbb N^*$,
$$
n_{\ell;d} = \frac{2\ell+d-1}{\ell} \binom{\ell+d-2}{\ell-1}
$$ 
($n_{0;d} = 1$). Notice that
$
n_{\ell;2}=2\ell + 1$ and 
\begin{equation}\label{n-ell}
n_{\ell;d} \sim \frac{2}{(d-1)!} \ell^{d-1}\quad \text{as}\quad \ell\to +\infty.
\end{equation}
We choose the family of real-valued hyperspherical harmonics \cite[Section 9.3]{VK93}
$(Y_{\ell,m;d})_{m=1}^{n_{\ell;d}}$ as orthonormal system of the $\ell$-th eigenspace. We recall that they are the restriction to the sphere of harmonic polynomials of degree $\ell\in\N$ in $d+1$ variables. Note that every eigenfunction $f=f_{\ell}$ in (\ref{helmholtz}) of eigenvalues $-\lambda_{\ell;d}$ can be written in the form
\begin{equation}\label{eigenfunction}
f_\ell(x)=\sqrt{\frac{\mu_d}{n_{\ell;d}}}\sum_{m=1}^{n_{\ell;d}} c_{\ell,m} Y_{\ell,m;d}(x),\quad x\in \mathbb{S}^d,
\end{equation}
where $c_{\ell,m}$ are real numbers, and $\sqrt{\mu_d}/\sqrt{n_{\ell;d}}$ is a normalizing factor whose role will be clarified just below. In this paper we are interested in \emph{random} eigenfunctions, i.e. linear combinations of hyperspherical harmonics as in (\ref{eigenfunction}) with $c_{\ell,m}$ replaced by random coefficients.  

\subsection{Random hyperspherical harmonics}

 For $\ell \in \mathbb N^*$, we define the $\ell$th random hyperspherical harmonic $T_\ell$ on $\mathbb S^d$ through
\begin{equation}\label{Tl}
T_\ell(x)=\sqrt{\frac{\mu_d}{n_{\ell;d}}}\sum_{m=1}^{n_{\ell;d}} a_{\ell,m} Y_{\ell,m;d}(x),\quad x\in \mathbb{S}^d,
\end{equation} 
where $(a_{\ell,m})_{m=1}^{n_{\ell;d}}$ are standard Gaussian i.i.d. random variables in $\R$. Then,
$$
T_\ell : \Omega \times \mathbb S^d \longrightarrow \R; \quad (\omega, x)\mapsto T_\ell(\omega, x)
$$
 is an isotropic and centered Gaussian random field on $\mathbb S^d$ (we will omit the dependence of $\omega$ in $T_\ell(\omega,x)$, as usual). The isotropy property for $T_\ell$  means that the random fields $T_\ell(\cdot)$ and $T_\ell(g \cdot)$ are equal in law for every $g\in SO(d+1)$, the special orthogonal group of $(d+1)\times (d+1)$-matrices. Indeed, the covariance kernel of $T_\ell$ is (by the addition theorem for hyperspherical harmonics  \cite[Section 9.6]{AAR99}) 
 \begin{equation}\label{covT}
 \Cov(T_\ell(x),T_\ell(y)) = \frac{\mu_d}{n_{\ell;d}}\sum_{m=1}^{n_{\ell;d}} Y_{\ell,m;d}(x) Y_{\ell,m;d}(y)= G_{\ell;d}(\<x,y\>), \quad x,y\in \mathbb S^d,
 \end{equation}
 where $G_{\ell;d}$ denotes the $\ell$th Gegenbauer polynomial \cite[\S 4.7]{Sze39} (for $d=2$, $G_{\ell; 2}\equiv P_\ell$, that is, the Legendre polynomial of degree $\ell$), and $\<x,y\>=\cos d(x,y)$ where $d(x,y)$ is the geodesic distance between points $x,y\in \mathbb S^d$. Gegenbauer polynomials are orthogonal polynomials on the interval $[-1,1]$ w.r.t. the weight $(1-t^2)^{(d-1)/2}$, and we normalize them such that 
 $$
 G_{\ell;d}(1) = 1. 
 $$
 In particular, this implies that $\Var(T_\ell(x)) = 1$ for every $x\in \mathbb S^d$. Note also that, by construction (\ref{Tl}), $T_\ell$ a.s. satisfies the Helmholtz equation (\ref{helmholtz}) with eigenvalue $-\lambda_\ell = -\ell(\ell+d-1)$. 

\subsection{Statistics of random hyperspherical harmonics}

We are interested in functionals of random hyperspherical harmonics of the type 
\begin{equation}\label{Xl}
X_\ell := \int_{\mathbb S^d} \varphi(T_\ell(x)) dx,
\end{equation}
where $\varphi\in L^2(\nu)$. In particular, we study the asymptotic behavior of the sequence of random variables $\lbrace X_\ell\rbrace_{\ell\in \mathbb N}$ as $\ell\to +\infty$ by means of chaotic decompositions \cite[\S 2.2]{NP12}: if $Z\sim \mathcal N(0,1)$, then $\varphi(Z)$ can be written as an orthogonal series in $L^2(\mathbb P)$ as follows 
\begin{equation}\label{chphi}
\varphi(Z)=\sum_{q\geq 0} \frac{b_q}{q!} H_q(Z),\quad \text{ where }\quad b_q:=\E[\varphi(Z)H_q(Z)],
\end{equation}
and from now on $H_q$ denotes the Hermite polynomial of order $q$ --  properties of Hermite polynomials and the Wiener chaos expansion will be recalled in Section \ref{background}.  Substituting (\ref{chphi}) into (\ref{Xl}) gives the chaotic expansion for $X_\ell$:
 \begin{equation}\label{chXl}
X_\ell = \sum_{q\ge 0} X_\ell[q],\quad \text{ where }\quad  X_\ell[q]:= \frac{b_q}{q!} \int_{\mathbb S^d} H_q(T_\ell(x))\,dx,
\end{equation}
see Section \ref{Conv0} for more details. Note that the term corresponding to $q=1$ in the series (\ref{chXl}) is null, indeed $H_1(x)=x$ and by orthogonality properties of hyperspherical harmonics, we have $\int_{\mathbb S^d} Y_{\ell,m;d}(x)\,dx = 0$ for $\ell\in \mathbb N^*$.

By standard properties of Hermite polynomials \cite[Section 2.2]{NP12} it is immediate to check that 
\begin{equation}\label{first_moments}
\mathbb E[X_\ell] = \mathbb E [X_\ell[0]]= \mathbb E[\varphi(Z)] \mu_d,\quad \Var(X_\ell) = \sum_{q\ge 2}\frac{b_q^2}{q!} \int_{(\mathbb S^d)^2} G_{\ell;d}(\<x,y\>)^q\,dx dy. 
\end{equation}
\begin{remark}\rm
By the symmetry property of Gegenbauer polynomials, i.e., 
$$
G_{\ell;d}(t)=(-1)^\ell G_{\ell; d}(-t),
$$
 if both $\ell$ and $q$ are odd the $q$th moment of $G_{\ell;d}$ vanishes. From now on, we take only even $\ell$, hence by $\ell\to +\infty$ we mean \emph{as $\ell$ goes to infinity along even $\ell$}. 
\end{remark}
  It is known (see Proposition \ref{GegProp} for more details) that the $q$th moment of Gegenbauer polynomials $G_{\ell;d}$ behaves, as $\ell\to +\infty$, as $1/n_{\ell;d}$ up to positive constants for $q=2$ while for $q\ge 3$ it is $o(1/n_{\ell;d})$. 
  
Now we recall the notion of \emph{Hermite rank}: we say that $X_\ell$ has Hermite rank $k\ge 2$ if $k$ is the smallest $q\ge 2$ for which $X_\ell[q]\ne 0$ (cf. (\ref{chXl})). Then we recall the following result for Hermite-$2$ rank statistics. Before, we need to introduce some more notation: we define the standardized statistic 
  \begin{equation}\label{tildeXl}
\X_\ell=\frac{ X_\ell-\E[X_\ell]}{\sqrt{\var(X_\ell)}},
\end{equation} 
and the Wasserstein distance between two integrable random variables $X,Y$:
\begin{equation}
\dW(X,Y) := \sup_{h\in \text{Lip}(1)} \left |\mathbb E[h(X)] - \mathbb E[h(Y)] \right |,
\end{equation}
where $\text{Lip}(1)$ denotes the space of functions $h:\mathbb R\to \mathbb R$ which are Lipschitz continuous with Lipschitz constant $\le 1$. 
 \begin{theorem} [Theorem 1.7 in \cite{ROS20}]\label{ROS}
Let $\varphi$ be as in \eqref{chphi} such that $b_2\neq 0$. Then, as $\ell\to +\infty$,
\begin{equation}\label{varXl}
\Var(X_\ell) \sim \frac{b_2^2}{2} \frac{(\mu_d)^2}{n_{\ell;d}}, 
\end{equation}
and moreover 
\begin{equation}\label{dW-th}
\dW(\X_{\ell}, Z)=O\big(\ell^{-\frac{1}{2}}\big).
\end{equation}
\end{theorem}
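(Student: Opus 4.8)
The plan is to establish the two claims in turn, in both cases feeding the Gegenbauer moment asymptotics of Proposition~\ref{GegProp} into the chaotic expansion \eqref{chXl}. Write $m_q(\ell):=\int_{(\mathbb S^d)^2}G_{\ell;d}(\langle x,y\rangle)^q\,dx\,dy$, so that \eqref{first_moments} reads $\Var(X_\ell)=\sum_{q\ge2}\frac{b_q^2}{q!}m_q(\ell)$. For the variance asymptotics \eqref{varXl} I would first compute the leading term exactly: inserting the addition theorem \eqref{covT} and using $\int_{\mathbb S^d}Y_{\ell,m;d}Y_{\ell,m';d}\,dx=\delta_{m,m'}$ collapses the double sum to $m_2(\ell)=\mu_d^2/n_{\ell;d}$, whence the $q=2$ contribution is exactly $\frac{b_2^2}{2}\mu_d^2/n_{\ell;d}$. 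To discard the rest I use the crude but uniform domination $m_q(\ell)\le m_2(\ell)$ valid for all $q\ge2$ (since $|G_{\ell;d}|\le G_{\ell;d}(1)=1$ forces $|G_{\ell;d}|^q\le G_{\ell;d}^2$), together with $n_{\ell;d}m_q(\ell)\to0$ for each fixed $q\ge3$ from Proposition~\ref{GegProp}. As $\sum_q\frac{b_q^2}{q!}\mu_d^2=\mu_d^2\,\E[\varphi(Z)^2]<\infty$, dominated convergence applied to $\sum_{q\ge3}\frac{b_q^2}{q!}n_{\ell;d}m_q(\ell)$ shows this tail tends to $0$, so $n_{\ell;d}\Var(X_\ell)\to\frac{b_2^2}{2}\mu_d^2$, which is \eqref{varXl}.

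For the quantitative CLT \eqref{dW-th} I split $\X_\ell=\frac{X_\ell[2]}{\sigma_\ell}+\frac{R_\ell}{\sigma_\ell}$, where $\sigma_\ell:=\sqrt{\Var(X_\ell)}$ and $R_\ell:=\sum_{q\ge3}X_\ell[q]$, the two summands being centered and orthogonal. The key point is that the leading chaos trivialises: expanding $T_\ell^2$ via \eqref{Tl} and integrating against $\int_{\mathbb S^d}Y_{\ell,m;d}Y_{\ell,m';d}\,dx=\delta_{m,m'}$ gives $\int_{\mathbb S^d}H_2(T_\ell(x))\,dx=\frac{\mu_d}{n_{\ell;d}}\sum_{m=1}^{n_{\ell;d}}H_2(a_{\ell,m})$, so that, with $v_\ell^2:=\Var(X_\ell[2])=\frac{b_2^2}{2}\mu_d^2/n_{\ell;d}$,
\begin{equation*}
\frac{X_\ell[2]}{v_\ell}=\frac{1}{\sqrt{2\,n_{\ell;d}}}\sum_{m=1}^{n_{\ell;d}}(a_{\ell,m}^2-1)
\end{equation*}
is \emph{exactly} a normalised sum of $n_{\ell;d}$ i.i.d.\ centred $\chi^2$ variables. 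The classical CLT with its Berry--Esseen/Wasserstein rate then yields $\dW(X_\ell[2]/v_\ell,Z)=O(n_{\ell;d}^{-1/2})=O(\ell^{-(d-1)/2})$. Equivalently, since $X_\ell[2]$ sits in the second Wiener chaos, the fourth moment theorem \cite{NP12} applies, the relevant fourth cumulant reducing through the reproducing identity $\int_{\mathbb S^d}G_{\ell;d}(\langle x,z\rangle)G_{\ell;d}(\langle z,y\rangle)\,dz=\frac{\mu_d}{n_{\ell;d}}G_{\ell;d}(\langle x,y\rangle)$ to $\mu_d^4 n_{\ell;d}^{-3}$, so that $\kappa_4(X_\ell[2]/v_\ell)=O(n_{\ell;d}^{-1})$.

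It then remains to absorb the tail and the normalisation mismatch. Using the triangle inequality, the scaling $\dW(aF,aZ)=a\,\dW(F,Z)$ and $\dW(Y,Z)\le\|Y-Z\|_{L^1(\mathbb P)}$, I obtain
\begin{equation*}
\dW(\X_\ell,Z)\le\frac{v_\ell}{\sigma_\ell}\,\dW\!\Big(\frac{X_\ell[2]}{v_\ell},Z\Big)+\Big|\frac{v_\ell}{\sigma_\ell}-1\Big|\,\E|Z|+\frac{\|R_\ell\|_{L^2(\mathbb P)}}{\sigma_\ell}.
\end{equation*}
Here $v_\ell/\sigma_\ell\to1$, and since $\sigma_\ell^2=v_\ell^2+\|R_\ell\|_{L^2(\mathbb P)}^2$ the middle term is of the order of $\|R_\ell\|_{L^2(\mathbb P)}^2/\sigma_\ell^2$ and is thus dominated by the last one. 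Everything therefore reduces to the tail estimate $\|R_\ell\|_{L^2(\mathbb P)}^2/\sigma_\ell^2=\sum_{q\ge3}\frac{b_q^2}{q!}m_q(\ell)\big/\sigma_\ell^2$, which, fed with the quantitative bounds of Proposition~\ref{GegProp} and combined with the leading rate $O(\ell^{-(d-1)/2})\le O(\ell^{-1/2})$, produces \eqref{dW-th}.

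The main obstacle is precisely this last tail estimate. The uniform domination $m_q(\ell)\le m_2(\ell)$ that suffices for the variance is far too lossy here, since it only gives $\|R_\ell\|_{L^2(\mathbb P)}^2/\sigma_\ell^2=o(1)$; to reach the announced rate one needs the genuinely quantitative decay of $m_q(\ell)$ for $q\ge3$, with enough uniformity in $q$ to sum the series against the coefficients $\frac{b_q^2}{q!}$. These bounds hinge on the true size of $\int_{(\mathbb S^d)^2}G_{\ell;d}^q$ --- in particular on the cancellation occurring in the odd moments, which sup-norm estimates of $G_{\ell;d}$ cannot detect --- and are exactly the delicate Gegenbauer-integral estimates of Proposition~\ref{GegProp}. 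This is the analytic heart of the argument and the step I expect to be hardest; everything else is bookkeeping around the exact second-chaos reduction above.
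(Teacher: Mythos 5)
The paper never proves this statement --- it is imported verbatim from \cite{ROS20} --- so your proposal can only be measured against the strategy of that source, which it essentially reproduces: compute the second--chaos variance exactly via the reproducing formula \eqref{G1}, dominate the higher chaoses, and for the CLT use a ``dominant second chaos plus $L^2$--tail'' decomposition. Your variance argument is complete and correct ($m_2(\ell)=\mu_d^2/n_{\ell;d}$ exactly, $m_q\le m_2$ uniformly, dominated convergence for the tail of the series), and the observation that $\int_{\SSd}H_2(T_\ell(x))\,dx=\frac{\mu_d}{n_{\ell;d}}\sum_m(a_{\ell,m}^2-1)$, so that the normalised leading chaos is literally an i.i.d.\ centred chi-square sum with Berry--Esseen/Wasserstein rate $O(n_{\ell;d}^{-1/2})$, is correct and a clean elementary substitute for the fourth moment theorem. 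The triangle-inequality bookkeeping (scaling of $\dW$, $|v_\ell/\sigma_\ell-1|\le\|R_\ell\|_{L^2}^2/\sigma_\ell^2$) is also fine.

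The gap is where you locate it, but it is more than ``delicate'': as set up, the argument does not deliver \eqref{dW-th} when $d=2$. The uniformity-in-$q$ issue is the minor one: for every $q\ge4$ one has $|G_{\ell;d}|^q\le G_{\ell;d}^4$ pointwise, hence $0\le m_q(\ell)\le m_4(\ell)$, and only $q=3$ needs the signed asymptotics of Proposition~\ref{GegProp}; the series then sums against $\sum_q b_q^2/q!<\infty$ with no further input. The real obstruction is the fourth chaos in dimension $2$: by \eqref{asLegq}, $\int_{\mathbb S^2}P_\ell(\<x,y\>)^4\,dx\asymp\log\ell/\ell^2$, so if $b_4\neq0$ then $\Var(X_\ell[4])/\Var(X_\ell)\gtrsim\log\ell/\ell$ and your controlling quantity $\|R_\ell\|_{L^2}/\sigma_\ell$ is of order $\sqrt{\log\ell/\ell}$, which is \emph{not} $O(\ell^{-1/2})$. (The same logarithm surfaces in the paper's own Theorem~\ref{Conv0}, where $\eta_{\ell;2}$ carries a $\log\ell$ exactly when $b_4\neq0$.) Hence your route fully proves \eqref{varXl}, and proves $\dW(\X_\ell,Z)=O(\ell^{-1/2})$ for $d\ge3$ and for $d=2$ with $b_4=0$, but only $O(\sqrt{\log\ell}\,\ell^{-1/2})$ for $d=2$ with $b_4\neq0$; removing the logarithm requires something sharper than the crude bound $\dW(F+G,F)\le\E|G|$ applied to the whole tail (or an appeal to the finer analysis in \cite{ROS20}). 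The loss is harmless for the purposes of this paper, since Theorem~\ref{mainThm} only claims $O_\eps(\ell^{-(1-\eps)/2})$, but it means the statement as written is not established by the proposal.
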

Theorem \ref{ROS} only deals with Wasserstein distance, and gives no information on the speed of convergence in stronger probability metrics such as Total Variation. We recall that, for random variables $X$ and $Y$
\begin{equation}\label{TVdistance}
\dTV(X,Y) := \sup_{A\in \B(\mathbb R)}\left | \mathbb P(X\in A) - \mathbb P(Y\in A)\right |.
\end{equation}

The main goal of this paper is to strenghten and upgrade Theorem \ref{ROS} from Wasserstein to Total Variation distance for suitably regular nonlinear functionals of $T_\ell$ of the form (\ref{Xl}). 

\subsection{Statement of the main results} 

The assumptions on $\varphi$ in Theorem \ref{ROS} are rather weak: it suffices that $\varphi$ is a square integrable function w.r.t. the Gaussian measure $\nu$ and $b_2\neq 0$.  In order to investigate the convergence for $X_\ell$ towards the Gaussian law in Total Variation distance, we need $\varphi(Z)$ to satisfy some additional regularity properties in the Malliavin sense.  These are summarized in the following condition. 


\begin{assumption}\label{ASSUMPTION}
	Let $\varphi(Z)$ fulfill  \eqref{chphi}. We assume that $b_2\neq 0$. Moreover,  
	$\varphi(Z)\in Dom(L)$ and 	
	$\varphi(Z), L\varphi(Z)\in \cap_{k\geq 0}\cap_ {p\geq 2}\DD^{k,p}$, that is,  for every $k\in \N$ and $p\geq 2$ the $k$th order Malliavin derivative of $\varphi(Z)$ and of $L\varphi(Z)$, given by
	\begin{equation}\label{Dkphi}
	D^ k\varphi(Z)=\sum_{q\geq k} \frac{b_{q}}{(q-k)!} H_{q-k}(Z)
	\quad \text{ and } \quad D^ kL\varphi(Z)=-\sum_{q\geq k} q\,\frac{b_{q}}{(q-k)!} H_{q-k}(Z),
	\end{equation}
 do exist and belong to $L^p(\mathbb P)$.
	Furthermore, the same properties are satisfied by the function $\phi\in L^2(\nu)$ defined by
	\begin{equation}\label{Tpsi}
	\phi(z)= \sum_{q\geq 2} \frac{|b_q|}{q!} H_{q}(z),
	\end{equation}
	that is, $\phi(Z)\in Dom(L)$ and 	 $\phi(Z), L\phi(Z)\in  \cap_{k\geq 0}\cap_ {p\geq 2}\DD^{k,p}$: for $k\in\N$ and $p\geq 2$, 
	\begin{equation}
	\label{DkTphi}
	D^ k\phi(Z)=\sum_{q\geq 2\vee k} \frac{|b_{q}|}{(q-k)!} H_{q-k}(Z)\quad
	\text{ and }\quad
	D^ kL\phi(Z)=-\sum_{q\geq 2\vee k} q\frac{|b_{q}|}{(q-k)!} H_{q-k}(Z)
	\end{equation}
	both belonging to $L^p(\mathbb P)$. 
	%
	%
\end{assumption}
From now on we assume that Assumption \ref{ASSUMPTION} holds. The requested Malliavin regularity will not be really surprising once the mathematical tools we are going to use will become clear (namely, the use of Proposition \ref{Main}).
Let us  give right away a condition allowing $\varphi$ to satisfy Assumption \ref{ASSUMPTION}.
	\begin{proposition}\label{exponential}
	Suppose  that there exist $C, R>0$ such that $|b_q|\leq CR^ q$ for every $q\geq 0$ in (\ref{chphi}). Then Assumption \ref{ASSUMPTION} holds.
\end{proposition}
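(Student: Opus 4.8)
The plan is to verify each membership required by Assumption \ref{ASSUMPTION} directly at the level of chaos coefficients, reducing everything to a single $L^p$ estimate for Hermite series with exponentially controlled coefficients; the hypothesis $b_2\neq 0$ is part of the standing Hermite-rank-$2$ framework and is carried along. The key simplification I would exploit is that we work in the one-dimensional setting $L^2(\nu)$, so the Cameron--Martin space is $\R$: hence for a random variable $F=\sum_{q}\gamma_q H_q(Z)$, its $k$th Malliavin derivative $D^kF=\sum_{q\ge k}\gamma_q\frac{q!}{(q-k)!}H_{q-k}(Z)$ is again a genuine scalar function of $Z$, not an element of a tensor space. Consequently, proving $F\in\cap_{k\ge 0}\cap_{p\ge 2}\DD^{k,p}$ amounts simply to showing that each such series defines an element of $L^p(\mathbb P)$ for every $p\ge 2$; existence of the derivatives then follows from the closability of $D$ together with $L^2$-convergence of the defining series.

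The heart of the argument is an elementary lemma that I would isolate first: if $g(z)=\sum_{j\ge 0}\gamma_j H_j(z)$ with $|\gamma_j|\le A\,Q(j)\,R^j/j!$ for some constant $A$, some polynomial $Q$ and the same $R$, then $g(Z)\in L^p(\mathbb P)$ for every $p\ge 2$. Indeed, by the triangle inequality in $L^p$ and hypercontractivity applied to the single chaos spanned by $H_j$ (which gives $\|H_j(Z)\|_{L^p}\le (p-1)^{j/2}\sqrt{j!}$, see \cite{NP12}), one obtains
\begin{equation*}
\|g(Z)\|_{L^p}\le \sum_{j\ge 0}|\gamma_j|\,\|H_j(Z)\|_{L^p}\le A\sum_{j\ge 0}Q(j)\,\frac{(R\sqrt{p-1})^{\,j}}{\sqrt{j!}}<\infty,
\end{equation*}
the last series converging for every value of $R\sqrt{p-1}$ by the ratio test. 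This is the crucial cancellation: the factor $1/q!$ carried by the chaos coefficients $b_q/q!$ beats the $\sqrt{q!}$ produced by hypercontractivity, leaving a super-exponentially convergent series that tolerates both the $R^q$ growth and any polynomial prefactor.

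It then remains to check that every random variable in Assumption \ref{ASSUMPTION} falls under this lemma. Reindexing $j=q-k$, the coefficients of $D^k\varphi(Z)$, $D^kL\varphi(Z)$, $D^k\phi(Z)$ and $D^kL\phi(Z)$ are, up to sign, $b_{j+k}/j!$, $(j+k)\,b_{j+k}/j!$, $|b_{j+k}|/j!$ and $(j+k)\,|b_{j+k}|/j!$ respectively (cf.\ \eqref{Dkphi}, \eqref{DkTphi}); using $|b_{j+k}|\le CR^{j+k}=CR^k\,R^j$ these are all bounded by $A\,Q(j)R^j/j!$ with $A=CR^k$ and $Q$ either constant or linear. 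The lemma yields membership in every $L^p(\mathbb P)$, $p\ge 2$, whence $\varphi(Z),\phi(Z)$ and $L\varphi(Z),L\phi(Z)$ all lie in $\cap_{k\ge 0}\cap_{p\ge 2}\DD^{k,p}$. Finally $\varphi(Z),\phi(Z)\in Dom(L)$ follows from $\sum_{q}q^2\,b_q^2/q!\le C^2\sum_q q^2 R^{2q}/q!<\infty$, which is exactly the $L^2$-convergence of the series defining $L\varphi(Z)$ and $L\phi(Z)$.

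I do not expect a serious obstacle: once one-dimensionality of the derivative is used and hypercontractivity is invoked, the whole statement collapses to the convergence of $\sum_j Q(j)x^j/\sqrt{j!}$. The only points requiring a little care are the justification that formulas \eqref{Dkphi}--\eqref{DkTphi} genuinely represent Malliavin derivatives (closability plus $L^2$-convergence, rather than formal term-by-term differentiation) and the bookkeeping of the linear prefactors $(j+k)$ introduced by the operator $L$ — neither of which affects convergence.
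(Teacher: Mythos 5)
Your proof is correct, but it follows a genuinely different route from the paper's. The paper reduces everything, exactly as you do, to showing that a single Hermite series $\sum_{q}\frac{|a_q|}{q!}|H_q(Z)|$ with $|a_q|\le CR^q$ has all moments; however, it then proceeds \emph{pointwise}: it inserts the explicit coefficient formula for $H_q$, splits into even and odd $q$, resums, and dominates the whole series almost surely by the single random variable $C\e^{R^2+R|Z|}$, which has moments of every order. You instead work at the level of $L^p$ norms, invoking hypercontractivity on each fixed chaos, $\|H_j(Z)\|_{L^p}\le (p-1)^{j/2}\sqrt{j!}$, and then the triangle inequality, so that convergence collapses to that of $\sum_j Q(j)(R\sqrt{p-1})^j/\sqrt{j!}$. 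Both arguments hinge on the same cancellation (the $1/q!$ in the chaos coefficients beating the $\sqrt{q!}$ of $\|H_q(Z)\|_2$), and your bookkeeping of the reindexed coefficients $b_{j+k}/j!$ and of the linear prefactors coming from $L$, as well as the $Dom(L)$ check via $\sum_q q^2 b_q^2/q!<\infty$, is accurate; your remark that closability plus $L^p$-convergence of the truncated series is what legitimizes the term-by-term formulas \eqref{Dkphi}--\eqref{DkTphi} is the right way to close that gap (the paper leaves this implicit too). What each approach buys: the paper's pointwise domination yields a single integrable majorant and hence almost sure absolute convergence of the series, a slightly stronger conclusion; your hypercontractive argument avoids any explicit formula for Hermite polynomials, gives an explicit dependence of the $L^p$ bound on $p$, and transfers verbatim to the genuinely infinite-dimensional isonormal setting where pointwise manipulations of $H_q$ are less convenient.
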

The proof of Proposition \ref{exponential} is postponed to Appendix \ref{appendix1}.

As a meaningful example, let $t\in\R$ denote any parameter and set $\varphi(z)=\e^{tz}$. Then $\varphi$ satisfies Assumption \ref{ASSUMPTION}, as a consequence of Proposition \ref{exponential} and of the well known representation
	$$
	\e^{tz}=\e^ {\frac{t^2}2}\sum_{q\geq 0} \frac{t^ q}{q!}\, H_q(z), \quad z\in\R.
	$$
	Notice that the above function provides a nonlinear functional that does not have a finite chaos expansion.

We are now in a position to state the main result of this paper.

\begin{theorem}\label{mainThm}
Let $\varphi$ satisfy Assumption \ref{ASSUMPTION}, then, for any $0< \varepsilon < 1$, as $\ell\to +\infty$,
\begin{equation}\label{TVrate}
\dTV(\X_{\ell}, Z) =O_\varepsilon\big ( \ell^{-\frac{1-\varepsilon}{2}} \big)
\end{equation}
where $O_\varepsilon$ means that the constants involved in the $O$-notation depend on $\varepsilon$. 
\end{theorem}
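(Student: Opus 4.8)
The plan is to derive the Total Variation bound \eqref{TVrate} from the abstract regularization result of Bally, Caramellino and Poly, namely Proposition \ref{Main}, whose purpose is precisely to upgrade convergence in a smooth Wasserstein distance $d_{\tilde W}$ to convergence in $\dTV$ in a quantitative way. Applying it to the sequence $\X_\ell$ with Gaussian limit $Z$ requires three ingredients: (i) a rate for the smooth Wasserstein distance $d_{\tilde W}(\X_\ell,Z)$; (ii) uniform control, over $\ell$, of the Malliavin--Sobolev norms of $\X_\ell$ and of $L\X_\ell$ in every $\DD^{k,p}$; and (iii) convergence of the Malliavin covariance $\sigma_\ell$ of $\X_\ell$ towards a deterministic, strictly positive limit, again at a controlled speed. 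The decisive feature of \cite{BCP19} is that this limit need \emph{not} equal $\sigma_Z=1$, which is what makes the scheme applicable here, since I will show that $\sigma_\ell\to 2$.

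Ingredient (i) comes for free: every smooth test function with derivative bounded by $1$ is in particular $1$-Lipschitz, so $d_{\tilde W}\le \dW$ and Theorem \ref{ROS} immediately yields $d_{\tilde W}(\X_\ell,Z)=O(\ell^{-1/2})$. Ingredient (ii) is where Assumption \ref{ASSUMPTION} enters. Working on the Wiener chaos generated by $\{a_{\ell,m}\}$, the $q$th chaotic component $X_\ell[q]=\frac{b_q}{q!}\int_{\mathbb S^d}H_q(T_\ell(x))\,dx$ satisfies $\E[X_\ell[q]^2]=\frac{b_q^2}{q!}\int_{(\mathbb S^d)^2}G_{\ell;d}(\langle x,y\rangle)^q\,dx\,dy$, and by Proposition \ref{GegProp} the Gegenbauer moment is $O(1/n_{\ell;d})$ for every $q\ge 2$. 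Using hypercontractivity to pass from $L^2$ to $L^p$, the action of $D^k$ and $L$ on each chaos, and majorising the coefficients by those of the dominating function $\phi$ of \eqref{Tpsi} (whose absolute coefficients $|b_q|$ make the series converge uniformly in $\ell$), I would bound the Sobolev norms of the centred functional $X_\ell-\E[X_\ell]$ by a factor $O(n_{\ell;d}^{-1/2})$. Since the standardisation divides by $\sqrt{\Var(X_\ell)}\sim\sqrt{b_2^2\mu_d^2/(2n_{\ell;d})}$, this factor is exactly absorbed, giving $\sup_\ell\|\X_\ell\|_{\DD^{k,p}}<\infty$ and likewise for $L\X_\ell$.

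The heart of the matter, and the main obstacle, is ingredient (iii). By the addition theorem \eqref{covT} the Malliavin covariance of the unnormalised functional is
\begin{equation*}
\sigma_{X_\ell}=\int_{(\mathbb S^d)^2}\varphi'(T_\ell(x))\,\varphi'(T_\ell(y))\,G_{\ell;d}(\langle x,y\rangle)\,dx\,dy,
\end{equation*}
so that $\sigma_\ell=\sigma_{X_\ell}/\Var(X_\ell)$. Expanding $\varphi'$ in Hermite polynomials and using $\E[H_p(T_\ell(x))H_{p'}(T_\ell(y))]=\delta_{p,p'}\,p!\,G_{\ell;d}(\langle x,y\rangle)^p$ gives $\E[\sigma_{X_\ell}]=\sum_{p\ge 0}\frac{b_{p+1}^2}{p!}\int_{(\mathbb S^d)^2}G_{\ell;d}^{\,p+1}$; the $p=0$ term vanishes by orthogonality of $G_{\ell;d}$ to constants, the $p=1$ term equals $b_2^2\,\mu_d^2/n_{\ell;d}$ exactly, and the terms $p\ge 2$ are $o(1/n_{\ell;d})$ by Proposition \ref{GegProp}. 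Dividing by $\Var(X_\ell)\sim b_2^2\mu_d^2/(2n_{\ell;d})$ yields $\E[\sigma_\ell]\to 2$ with an explicit rate. Since the target is a constant, it then remains to show $\Var(\sigma_\ell)\to 0$ with a rate, because $d_{\tilde W}(\sigma_\ell,2)\le|\E[\sigma_\ell]-2|+\sqrt{\Var(\sigma_\ell)}$. Estimating $\Var(\sigma_\ell)$ is the crux: $\E[\sigma_{X_\ell}^2]$ is a fourfold integral of $\E[\varphi'(T_\ell(x))\varphi'(T_\ell(y))\varphi'(T_\ell(z))\varphi'(T_\ell(w))]\,G_{\ell;d}(\langle x,y\rangle)G_{\ell;d}(\langle z,w\rangle)$, whose chaotic and diagram expansion produces integrals over $(\mathbb S^d)^4$ of products of powers of Gegenbauer polynomials evaluated at the six pairwise inner products. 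Showing that every configuration other than those reproducing $\E[\sigma_{X_\ell}]^2$ is of strictly smaller order is exactly the content of the new Gegenbauer moment bounds of Appendices \ref{Diag} and \ref{sect:graph}, where each diagram is turned into a graph, each connected component is covered by a spanning tree, and only the tree contributions are summed; the dominating function $\phi$ again guarantees that the infinite chaos tail is harmless. This step carries the bulk of the technical work and produces $d_{\tilde W}(\sigma_\ell,2)=O(\ell^{-1/2})$ up to a possible logarithmic or $\ell^{\varepsilon}$ factor.

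Finally I would feed the three ingredients into Proposition \ref{Main}. The regularization lemma returns a bound of the form $\dTV(\X_\ell,Z)\le C_\theta\,\big(d_{\tilde W}(\X_\ell,Z)+d_{\tilde W}(\sigma_\ell,2)\big)^{\theta}$ with an interpolation exponent $\theta\in(0,1)$ that may be chosen arbitrarily close to $1$, at the cost of a constant $C_\theta$ depending on the uniformly bounded Sobolev norms of ingredient (ii) and on the strictly positive lower bound $2$ of the limit covariance. Since both smooth Wasserstein quantities are $O(\ell^{-1/2})$, the choice $\theta=1-\varepsilon$ gives $\dTV(\X_\ell,Z)=O_\varepsilon(\ell^{-(1-\varepsilon)/2})$, which is \eqref{TVrate}. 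The loss of the arbitrarily small exponent $\varepsilon$ is thus attributable solely to the interpolation in the regularization step, not to the Gaussian approximation itself.
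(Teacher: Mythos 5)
Your overall architecture is exactly the paper's: apply the Bally--Caramellino--Poly regularization result (Proposition \ref{Main}) with $F=\X_\ell$, $G=Z$ and a \emph{deterministic} limit $U=2$ for the Malliavin covariance, feeding in the Wasserstein rate of Theorem \ref{ROS}, the uniform Malliavin--Sobolev bounds (the paper's Proposition \ref{UnifLim}) and the convergence $\sigma_\ell\to 2$ (Theorem \ref{Conv0}). Your computation of $\E[\sigma_\ell]$ via $\sigma_{X_\ell}=\int_{(\mathbb S^d)^2}\varphi'(T_\ell(x))\varphi'(T_\ell(y))G_{\ell;d}(\langle x,y\rangle)\,dx\,dy$ (after the reproducing formula \eqref{G1}) and the identification of the limit $2$ are correct and coincide with the paper's argument.

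The genuine gap is in your treatment of $\Var(\sigma_\ell)$ when $d=2$. The spanning-tree/graph bound (Lemma \ref{STIMA}), applied as you describe, yields $\Var(\sigma_\ell)=O(\ell^{-(d-1)/2})$; for $d=2$ this is only $O(\ell^{-1/2})$, hence $\dW(\sigma_\ell,2)=O(\ell^{-1/4})$ --- a polynomial loss, not ``a possible logarithmic or $\ell^{\varepsilon}$ factor''. Fed into Proposition \ref{Main}, this would only give $\dTV(\X_\ell,Z)=O_\varepsilon(\ell^{-(1-\varepsilon)/4})$ in dimension $2$, strictly weaker than \eqref{TVrate}. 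The paper must abandon the graph argument for $d=2$ and run a separate, substantially longer analysis (Proposition \ref{propDIM2} in Appendix B), based on convolutions of Gaunt integrals and an exhaustive case-by-case study of all admissible diagrams $\kappa\in\mathcal{C}_{q_1-1,\ldots,q_4-1}$, in order to recover $\Var(\sigma_\ell)=O(\ell^{-1})$ and hence the stated rate. A secondary caveat: controlling the Sobolev norms ``by hypercontractivity'' chaos-by-chaos is delicate for a functional with an infinite chaos expansion, since the hypercontractivity constant $(p-1)^{q/2}$ grows with the chaos order $q$ and the resulting series over $q$ need not converge under Assumption \ref{ASSUMPTION} alone; the paper instead expands $\E[|D^{(k)}\X_\ell|^{2p}]$ with the diagram formula and resums the combinatorial factors exactly into $\E[|D^k\phi(Z)|^{2p}]$, which is finite by hypothesis.
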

	It is worth noticing that, for any dimension $d$, the upper bound in (\ref{TVrate}) for the Total Variation distance coincides (morally) with the quantitative bound of $\dW(\X_{\ell}, Z)$ in \eqref{dW-th}. So, conditionally on Theorem \ref{ROS} (and the use of Proposition \ref{Main} below), our result cannot be improved.

To the best of our knowledge, Theorem \ref{mainThm} is the first result on the convergence of statistics of random hyperspherical harmonics (in particular having an infinite chaos expansion) in Total Variation distance. For $\varphi = H_q$ the $q$th Hermite polynomial with $q\ge 2$, or for $\varphi$ equal to a linear combination of  such Hermite polynomials, bounding from above $\dTV(\X_{\ell}, Z)$ is an application of the fourth moment theorem by Nourdin and Peccati, see \cite{MW14} for results in the two-dimensional case and \cite{ROS20, Ros19} for higher dimensions. 

An intermediate key step to prove our main result relies on the investigation of the asymptotic behavior of the sequence of Malliavin derivatives of $X_\ell$; we stress that this analysis is new and leads to some results of independent interest, see Proposition \ref{Conv0} and Proposition \ref{UnifLim} for more details. 

Besides the case of higher Hermite rank functionals, that we do believe it can be dealt with by using the same approach as the one developed for the proof of Theorem \ref{mainThm} though involving heavier computations, we leave as a topic for future research the interesting case of the indicator function: for $u\in \mathbb R$, 
\begin{equation*}
\varphi(z) = \1_{[u,+\infty)}(z),\quad z\in \mathbb R,
\end{equation*}
thus $X_\ell$ is the so-called excursion area at level $u$, see \cite{MW14}. Indeed, $\varphi(Z)$ is \emph{not} derivable in Malliavin sense, and the Assumption \ref{ASSUMPTION} is not satisfied. 

\section{Proofs of the main results}

In this Section we explain the main ideas behind our argument, eventually giving the proof of our main result.

\subsection{On the proofs}\label{sect:proofs}

To show the main ideas of the proof of Theorem \ref{mainThm} and of the results that we are going to use, we need to introduce some properties associated with the Malliavin regularity of the random variables at hands. We give here a result developed in \cite{BCP19}, holding in a \textit{purely abstract} Malliavin calculus setting (see \cite[Section 2.1]{BCP19}), that is, based on a random noise that does not need to be Gaussian (see e.g. the one used in \cite{BCPzeri}). Let us resume it here briefly.	First of all, it is assumed that the following ingredients are given:
	\begin{itemize}
		\item a set $\mathcal{E}\subset \cap_{p\geq 2}L^ p(\Omega)$ such that for every $n\in\N^*$, $f\in C_p^ \infty(\R^ n)$ and $F=(F_1,\ldots, F_n)\in\mathcal{E}^ n$ then $f(F)\in\mathcal{E}$ (so, $\mathcal{E}$ is an algebra);
		\item a Hilbert space $\mathcal{H}$, whose inner product and associated norm will be denoted by $\<\cdot,\cdot\>_{\mathcal{H}}$ and $|\cdot|_{\mathcal{H}}$ respectively; we let $L^ p(\Omega;\mathcal{H}) $ stand for the set of the r.v.'s taking values in $\mathcal{H}$ whose norm has moment of order $p$. 
	\end{itemize}
	%
	%
	In this environment, it is assumed that there exist two linear operators 
	$$
	D\,:\,\mathcal{E}\to \cap_{p\geq 2}L^ p(\Omega;\mathcal{H}) 
	\quad\mbox{and}\quad
	L\,:\,\mathcal{E}\to \mathcal{E}
	$$
	such that
	\begin{itemize}
		\item[\textbf{(M1)}] for every $F\in\mathcal{E}$ and $h\in\mathcal{H}$, $D_hF:=\<DF,h\>_{\mathcal{H}}\in \mathcal{E}$;
		\item[\textbf{(M2)}] 
		for every $n\in\N^*$, $f\in C_p^ \infty(\R^ n)$ and $F=(F_1,\ldots, F_n)\in\mathcal{E}^ n$ one has 
		$$
		Df(F)=\sum_{i=1}^n \partial_{x_i}f(F)DF_i\in\mathcal{E};
		$$
		\item [\textbf{(M3)}] for every $F,G\in \mathcal{E}$ one has 
		$\E[LF\, G]=-\E[\<DF,DG\>_\mathcal{H}]=\E[F\, LG]$.
	\end{itemize}
	Thus, we recognize that these are settings and properties typically fulfilled in Malliavin calculus (but not in \textit{any} Malliavin calculus framework - for example this is not in the case of jump processes, where the chain rule \textbf{(M2)} does not hold in general, see e.g. the discussion and the references quoted  in \cite[Section1]{BCP19}). Hence, we call $D$ the Malliavin derivative and $L$ the Ornstein-Uhlenbeck operator. The higher order Malliavin derivatives can be defined straightforwardly: for $k\geq 2$, 
	$$
	D^ k\,:\, \mathcal{E}\to \cap_ {p\geq 2}L^p(\Omega;\mathcal{H}^ {\otimes k})
	$$ 
	is the multilinear operator such that for every $h_1,\ldots,h_k\in \mathcal{H}$ and $F\in\mathcal{E}$,
	$$
	D^ k_{h_1,\ldots,h_k}F:= \<D^ kF, h_1\otimes\cdots\otimes h_k\>_{\mathcal{H}^ {\otimes k}}=
	D_{h_k}D^ {k-1}_{h_1,\ldots,h_{k-1}}F.
	$$ 
	Notice that, when dealing with a concrete Malliavin calculus, one can choose $\mathcal{E}$ either the set of the simple functionals or  the set $\DD^ \infty$ of the r.v.'s  whose Malliavin derivative of any order does exist and has finite moment of any power.

In order to introduce the result in \cite{BCP19} that we are going to use, we first need to define the involved Malliavin-Sobolev norms: for $F=(F_1,\ldots,F_n)\in \mathcal{E}^ n$, we set
	\begin{equation}\label{mall0}
	|F|_{1,q}=\sum_{k=1}^q\sum_{i=1}^ n|D^ kF_i|_{\mathcal{H}^{\otimes k}},\quad |F|_q=|F|+|F|_{1,q},\quad \|F\|_{k,p}=\| |F|_k\|_p,
\end{equation}
where $\|\cdot \|_p$ is the standard norm in $L^p(\Omega)$. Then, for $k\in\N^ *$ and $p\geq 2$, we set
$$
\DD^ {k,p}=\overline{\mathcal{E}}^ {\|\cdot\|_{k,p}}\quad \mbox{and}\quad \DD^ {k,\infty}=\cap_ {p\geq 2}\DD^{k,p}.
$$
We also extend the operator $L$ in the usual way: for $F=(F_1,\ldots,F_n)\in \mathcal{E}^ n$, we set $LF=(LF_1,\ldots,LF_n)$ and $\|F\|_{\mathrm{OU}}=\|F\|_2+\|LF\|_2$. And we define $Dom(L)=\overline{\mathcal{E}}^ {\|\cdot\|_{{\mathrm{OU}}}}$.

Now,  fix $q\in\N$ and $F=(F_1,\ldots,F_n)\in (\DD^ {q+1,\infty})^n$. If $F=(F_1,\ldots,F_n)\in (Dom(L))^n$ and $LF=(LF_1,\ldots,LF_n)\in (\DD^ {q,\infty})^n$, the following quantities are well posed:
	\begin{equation}\label{mall}
	\begin{array}{ll}
	&\mathcal{C}_q(F)=\big(|F|_{1,q+1}+|LF|_{q}\big)^ q\big(1+|F|_{1,q+1}\big)^ {4nq},\\
    &\mathcal{C}_{q,p}(F)=\|\mathcal{C}_q(F)\|_ p,\\
    &\mathcal{Q}_{q}(F)=\mathcal{C}_{q,2}(F)\|(\det \sigma_F)^ {-1}\|_{2q}^ q,
	%
	\end{array}
	\end{equation}
	in which $p\geq 2$ and $\sigma_F$ is the Malliavin covariance matrix of $F$, that is,
	\begin{equation}\label{MallCov}
	(\sigma_F)_{i,j}=\<DF_i,DF_j\>_{\mathcal{H}},\quad i,j=1,\ldots, n.
	\end{equation}
	Notice that the quantity   $\mathcal{C}_{q,p}(F)$, respectively $\mathcal{Q}_{q}(F)$, in \eqref{mall} is in principle well posed whenever $F_i\in\DD^ {q+1,\bar p}$ for a suitable $\bar p\geq p$, respectively $\bar p\geq 2$.

We are now ready to state the result in \cite{BCP19} on which our asymptotic analysis will be based:
		
\begin{proposition}\label{Main}
Let $F$ and $G$ be random vectors in $\R^ n$ such that
$$
M_q(F,G):=\mathcal{C}_{q,1}(F)+\mathcal{Q}_{q}(G)<\infty, 
$$
for every $q\geq 1$. Let $U>0$ be a real random variable such that $\|U^{-1}\|_ q<\infty$ for every $q\geq 1$.
Then for every $\varepsilon>0$ there exist $C_\varepsilon>0$ and $q_\varepsilon>1$ such that
$$
\dTV(F,G)\leq C_\varepsilon\big(M_{q_\varepsilon}(F,G)+\|U^ {-1}\|_{2/\varepsilon}\big)\big(\dW(F, G)+\dW(\det \sigma_{F}, U)\big)^{1-\varepsilon}.
$$

\end{proposition}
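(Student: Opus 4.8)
The plan is to prove the inequality by a mollification/interpolation argument that reduces the total variation distance to a controlled combination of the mollification errors for $F$ and $G$ and the smooth distance between them. Fix a bounded measurable $f:\R^n\to\R$ with $\|f\|_\infty\le 1$; since $\dTV(F,G)=\sup_f|\E[f(F)]-\E[f(G)]|$ over such $f$, it suffices to bound this quantity uniformly in $f$. I would choose a higher-order smoothing kernel $\phi$ on $\R^n$ (a Schwartz function with $\int\phi=1$ and vanishing moments $\int x^\alpha\phi(x)\,dx=0$ for $1\le|\alpha|\le m-1$), set $\phi_\delta(x)=\delta^{-n}\phi(x/\delta)$, $f_\delta=f*\phi_\delta$, and split
\[
\E[f(F)]-\E[f(G)]=\underbrace{\E[(f-f_\delta)(F)]}_{A}+\underbrace{\E[f_\delta(F)]-\E[f_\delta(G)]}_{B}+\underbrace{\E[(f_\delta-f)(G)]}_{C},
\]
bounding each piece and optimizing at the end over $\delta$ and an auxiliary threshold $\rho>0$.

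Term $B$ is the easy one: $f_\delta$ is Lipschitz with $\|\nabla f_\delta\|_\infty\le C\delta^{-1}\|f\|_\infty$, so $|B|\le C\delta^{-1}\dW(F,G)$. Term $C$ uses the non-degeneracy of $G$ encoded in $\mathcal{Q}_q(G)$: because $\|(\det\sigma_G)^{-1}\|_{2q}<\infty$, repeated Malliavin integration by parts (via the chain rule (M2), the duality (M3) and the matrix $\sigma_G^{-1}$) shows that the law of $G$ has a density $p_G$ of class $C^{m-1}$ with integrable derivatives up to order $m$ and $\|\nabla^m p_G\|_1\le C\,\mathcal{Q}_q(G)$ for $q$ large enough in terms of $m$. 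Since $\phi$ has vanishing moments through order $m-1$, a Taylor expansion gives $\|p_G-p_G*\phi_\delta\|_1\le C\delta^m\|\nabla^m p_G\|_1$, whence $|C|\le C\delta^{m}\mathcal{Q}_q(G)$.

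The delicate term is $A$, since $F$ is \emph{not} assumed non-degenerate, and this is exactly where the comparison with $U$ enters. I would localize with a smooth cutoff $\Theta=\Theta(\det\sigma_F)$ equal to $1$ on $\{\det\sigma_F\ge 2\rho\}$ and $0$ on $\{\det\sigma_F\le\rho\}$, and write $A=\E[(f-f_\delta)(F)\Theta]+\E[(f-f_\delta)(F)(1-\Theta)]$. On the support of $\Theta$ one has $\det\sigma_F\ge\rho$, so localized integration by parts now produces weights involving $\sigma_F^{-1}$ (hence powers of $\rho^{-1}$) together with $|F|_{1,q+1}$ and $|LF|_q$, all controlled in $L^1$ by $\mathcal{C}_{q,1}(F)$; this shows the $\Theta$-weighted law of $F$ has a $C^{m-1}$ density, and the Taylor argument as above yields $|\E[(f-f_\delta)(F)\Theta]|\le C\delta^m\rho^{-c}\mathcal{C}_{q,1}(F)$ for suitable $c$ and $q$. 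The remaining piece is bounded by the probability of the bad set, $|\E[(f-f_\delta)(F)(1-\Theta)]|\le C\,\P(\det\sigma_F\le 2\rho)$, which I would control through an optimal coupling of the scalars $\det\sigma_F$ and $U$: $\P(\det\sigma_F\le 2\rho)\le \P(U\le 3\rho)+\P(|\det\sigma_F-U|>\rho)\le (3\rho)^{2/\varepsilon}\|U^{-1}\|_{2/\varepsilon}^{2/\varepsilon}+\rho^{-1}\dW(\det\sigma_F,U)$, using Markov's inequality and the Kantorovich identity $\dW(\det\sigma_F,U)=\E|\det\sigma_F-U|$ along the optimal coupling.

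Collecting the bounds, with $D:=\dW(F,G)+\dW(\det\sigma_F,U)$, gives
\[
|\E[f(F)]-\E[f(G)]|\le C\Big(\delta^{-1}D+\delta^m\big(\mathcal{Q}_q(G)+\rho^{-c}\mathcal{C}_{q,1}(F)\big)+\rho^{2/\varepsilon}\|U^{-1}\|_{2/\varepsilon}^{2/\varepsilon}+\rho^{-1}D\Big).
\]
I would first choose $\rho$ to balance the $\rho$-dependent terms, then optimize $\delta\sim D^{1/(m+1)}$, which turns the two genuinely small contributions ($\delta^m$ and $\delta^{-1}D$) into $D^{m/(m+1)}$; choosing $m\sim 1/\varepsilon$ (so $q=q_\varepsilon\sim 1/\varepsilon$) converts the exponent $m/(m+1)$ into $1-\varepsilon$ and leaves $M_{q_\varepsilon}(F,G)+\|U^{-1}\|_{2/\varepsilon}$ as the prefactor, matching the claimed form. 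The main obstacle is precisely term $A$: establishing the localized integration-by-parts estimate with \emph{explicit} dependence on the lower bound $\rho$ for $\det\sigma_F$ (so that $F$ need not be globally non-degenerate, the degenerate part being paid for only through the coupling with $U$), and then carrying out the two-parameter optimization in $(\delta,\rho)$ together with the choice of $m$, so that the final exponent is $1-\varepsilon$ rather than the $1/2$ that a first-order kernel would produce.
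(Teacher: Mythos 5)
This proposition is not proved in the paper at all: it is imported verbatim from \cite{BCP19} (Proposition 3.12 there, with $p=p'=1$), so the only meaningful comparison is with the proof in that reference. Your sketch reconstructs essentially the same argument used there: the three-term mollification split with a higher-order kernel (vanishing moments up to order $m-1$ so that the smoothing error is $O(\delta^m)$ rather than $O(\delta)$, which is what ultimately yields the exponent $1-\varepsilon$ instead of $1/2$), Malliavin integration by parts against the non-degenerate vector $G$ to control the smoothing error through $\mathcal{Q}_q(G)$, a localization in $\det\sigma_F$ above a threshold $\rho$ for the possibly degenerate $F$, and the estimate $\P(\det\sigma_F\le 2\rho)\le\P(U\le 3\rho)+\rho^{-1}\dW(\det\sigma_F,U)$ via the monotone coupling and Markov's inequality --- this last step being exactly the mechanism by which the hypothesis on $U$ and the term $\dW(\det\sigma_F,U)$ enter the final bound. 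The decomposition, the role of each hypothesis, and the shape of the final two-parameter optimization are all correct. What your write-up does not supply (and honestly flags) is the quantitative localized integration-by-parts lemma: the statement that the weights produced on $\{\det\sigma_F\ge\rho\}$ are bounded in $L^1$ by $\rho^{-c}\,\mathcal{C}_{q,1}(F)$, with $c$ and $q$ explicit in terms of $m$, and the analogous bound $\|\nabla^m p_G\|_1\le C\,\mathcal{Q}_q(G)$. These are the genuinely technical lemmas of \cite{BCP19} (their Sections 2--3), and without them the argument is an outline rather than a proof; but the outline is the right one, and the bookkeeping of the $(\delta,\rho,m)$ optimization you describe does close as claimed.
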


This is actually \cite[Proposition 3.12]{BCP19}, see in particular (3.30), with the choice $p=p'=1$ (remark that, as it immediately and clearly follows from the proof, there is a misprint in the requests therein: it is erroneously asked that $\mathcal{C}_{q,1}(G), \mathcal{Q}_{q}(F)<\infty$ instead of $\mathcal{C}_{q,1}(F), \mathcal{Q}_{q}(G)<\infty$).

Our plan is to use Proposition \ref{Main} with $F=\X_{\ell}$ and $G=Z\sim \mathcal N(0,1)$.  Indeed, in our framework, the underlying Hilbert space is $H=L^2(\mathbb S^d, \B(\mathbb S^d), \Leb)$, the random eigenfunction $T_\ell$ admitting the isonormal representation (\ref{spR}).  Thus
\begin{equation}\label{sigmal}
\sigma_\ell = \sigma_{\tilde X_\ell} =\int_{\mathbb S^d} |D_y \tilde X_\ell |^2\,dy.
\end{equation}
First, Assumption \ref{ASSUMPTION} will guarantee that all the involved Malliavin functionals are well defined (we will give more details about Malliavin calculus for Gaussian random fields in Section \ref{sect:malliavin}). 
Theorem \ref{ROS} already ensures that $\dW(\X_{\ell}, Z)\to 0$ (giving also an estimation of the speed of convergence). Therefore, we obtain the stronger convergence $\dTV(\X_{\ell}, Z)\to 0$ (together with a useful upper bound on the rate), once we prove that:
\begin{enumerate}
	\item[\textbf{(H1)}] there exists a \textit{deterministic} $U>0$ such that $\dW(\sigma_{\ell}, U)\to 0$ with some speed, 
	\item[\textbf{(H2)}] for every $q\geq 1$, $\sup_{\ell }M_q(\X_{\ell},Z)<\infty$, where $M_q(\tilde X_\ell, Z)$ is defined in Proposition \ref{Main}.
\end{enumerate}

\subsection{Proof of Theorem \ref{mainThm}}

Concerning \textbf{(H1)}, we will prove the following key result. 

\begin{theorem}\label{Conv0}
Let $\sigma_\ell$ be the Malliavin covariance of $\X_\ell$. Under Assumption \ref{ASSUMPTION}, we have 
$$
|\E[\sigma_\ell ]- 2 | = O\left (\eta_{\ell;d}\right )
\quad \mbox{and}\quad
\Var(\sigma_\ell )= O\left (\ell^{-1}\1_{d=2}+ \ell^{-(d-1)/2}\1_{d\geq 3}\right ),
 $$
 where 
 \begin{equation*}
 \eta_{\ell;d} =  \1_{d=2}\left ( \1_{b_4\ne 0}\frac{\log \ell}{\ell} +\1_{b_4=0}\frac{1}{ \ell}\right )  + \frac{1}{\ell} \1_{d\ge 3}.
 \end{equation*}
\end{theorem}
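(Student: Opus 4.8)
\emph{Setup and reduction to Gegenbauer moments.} The plan is to first turn $\sigma_\ell$ into an integral involving only $\varphi'$ and the covariance kernel. Writing $\psi:=\varphi'$, the chain rule \textbf{(M2)} gives $D_y X_\ell=\int_{\mathbb S^d}\psi(T_\ell(x))\,D_yT_\ell(x)\,dx$, and since in the isonormal representation $\int_{\mathbb S^d}D_yT_\ell(x)\,D_yT_\ell(x')\,dy=\Cov(T_\ell(x),T_\ell(x'))=G_{\ell;d}(\langle x,x'\rangle)$, substituting into \eqref{sigmal} and applying Fubini yields
\[
\sigma_\ell=\frac{1}{\var(X_\ell)}\int_{(\mathbb S^d)^2}\psi(T_\ell(x))\,\psi(T_\ell(x'))\,G_{\ell;d}(\langle x,x'\rangle)\,dx\,dx'.
\]
All moments of $\sigma_\ell$ are thereby expressed through the one-dimensional Gegenbauer moment integrals $m_q:=\int_{(\mathbb S^d)^2}G_{\ell;d}(\langle x,x'\rangle)^q\,dx\,dx'$, for which I would invoke Proposition \ref{GegProp}: $m_1=0$ (orthogonality), $m_2=\mu_d^2/n_{\ell;d}$, and $m_q=o(n_{\ell;d}^{-1})$ for $q\ge3$. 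The role of Assumption \ref{ASSUMPTION} — in particular the finiteness of the Malliavin--Sobolev norms of the auxiliary function $\phi$ built from $|b_q|$ in \eqref{Tpsi} — is precisely to make all the ensuing chaos series absolutely convergent and dominated uniformly in $\ell$, so that term-by-term estimation is legitimate.

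\emph{The expectation.} Expanding $\psi=\sum_{p\ge0}\frac{b_{p+1}}{p!}H_p$ (cf.\ \eqref{Dkphi} with $k=1$) and using $\E[H_m(U)H_n(V)]=\delta_{mn}\,n!\,\rho^n$ for standard jointly Gaussian $U,V$ with $\Corr(U,V)=\rho$, I obtain $\E[\psi(T_\ell(x))\psi(T_\ell(x'))]=\sum_{q\ge1}\frac{b_q^2}{(q-1)!}G_{\ell;d}(\langle x,x'\rangle)^{q-1}$. Multiplying by $G_{\ell;d}$, integrating, and recalling $\var(X_\ell)=\sum_{q\ge2}\frac{b_q^2}{q!}m_q$ from \eqref{first_moments},
\[
\E[\sigma_\ell]=\frac{\sum_{q\ge2}\frac{b_q^2}{(q-1)!}m_q}{\sum_{q\ge2}\frac{b_q^2}{q!}m_q}=2+\frac{\sum_{q\ge3}(q-2)\frac{b_q^2}{q!}m_q}{\sum_{q\ge2}\frac{b_q^2}{q!}m_q},
\]
the $q=2$ contribution to the numerator vanishing. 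Since $b_2\ne0$ the denominator is $\asymp m_2\asymp n_{\ell;d}^{-1}$, so $|\E[\sigma_\ell]-2|=O\!\big(m_2^{-1}\sum_{q\ge3}(q-2)\tfrac{b_q^2}{q!}m_q\big)$. Plugging in the sharp rates of Proposition \ref{GegProp} gives $\eta_{\ell;d}$: for $d=2$ the fourth moment blows up logarithmically, $m_4\asymp(\log\ell)/\ell^2$, dominating $m_3\asymp\ell^{-2}$ and the higher moments, so the ratio is $\asymp(\log\ell)/\ell$ when $b_4\ne0$ and $\asymp1/\ell$ when $b_4=0$; for $d\ge3$ every higher moment is $O(\ell^{-d})=o(\ell^{-1}n_{\ell;d}^{-1})$, whence the ratio is $O(1/\ell)$.

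\emph{The variance.} Setting $Y_\ell:=\var(X_\ell)\,\sigma_\ell$ one has $\var(\sigma_\ell)=\var(Y_\ell)/\var(X_\ell)^2$ with $\var(X_\ell)^2\asymp n_{\ell;d}^{-2}$, and
\[
\var(Y_\ell)=\int_{(\mathbb S^d)^4}\Cov\big(\psi(T_\ell(x_1))\psi(T_\ell(x_2)),\,\psi(T_\ell(x_3))\psi(T_\ell(x_4))\big)\,G_{\ell;d}(\langle x_1,x_2\rangle)\,G_{\ell;d}(\langle x_3,x_4\rangle)\,dx_1\,dx_2\,dx_3\,dx_4.
\]
Expanding each factor in Hermite polynomials and applying the diagram formula, the covariance becomes a sum over \emph{connected} diagrams linking the group $\{1,2\}$ to the group $\{3,4\}$, so that $\var(Y_\ell)$ is a convergent series of four-point integrals of the form $\int_{(\mathbb S^d)^4}\prod_{i<j}G_{\ell;d}(\langle x_i,x_j\rangle)^{a_{ij}}\,dx_1\cdots dx_4$ over admissible exponents $(a_{ij})$. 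Estimating these and summing their contributions yields, after division by $\var(X_\ell)^2$, the claimed order $\ell^{-1}\1_{d=2}+\ell^{-(d-1)/2}\1_{d\ge3}$.

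\emph{Main obstacle.} The genuinely hard step is the variance: unlike $m_q$, the four-point integrals do not collapse to a single one-dimensional Gegenbauer moment, and bounding each factor separately is far too lossy. The approach I would follow — carried out in Appendix \ref{Diag} and Appendix \ref{sect:graph} — is to encode each diagram as a graph on the vertices $\{x_1,\dots,x_4\}$, use that a connected graph admits a spanning tree, and integrate the variables out successively along the tree, each tree edge being controlled by one of the already understood moments $m_q$; this reduces every four-point integral to products of one-dimensional moments while keeping track of the dominant configuration. Isolating that dominant configuration and verifying that it reproduces exactly $\ell^{-1}\1_{d=2}+\ell^{-(d-1)/2}\1_{d\ge3}$ (and, in parallel, that all remaining diagrams are negligible) is the delicate, computation-heavy part, and is where the novel Gegenbauer estimates are indispensable.
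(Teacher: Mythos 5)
Your reduction of $\sigma_\ell$ to the double integral of $\varphi'(T_\ell(x))\,\varphi'(T_\ell(x'))\,G_{\ell;d}(\langle x,x'\rangle)$ is exactly the paper's formula \eqref{sigma} (obtained there via the reproducing property \eqref{G1}), and your computation of $\E[\sigma_\ell]$ — the identity $\tfrac{b_q^2}{(q-1)!}=q\tfrac{b_q^2}{q!}$ isolating the constant $2$, the vanishing of the $q=2$ remainder, and the case split on $b_4$ for $d=2$ — matches the paper's argument for the first assertion. The variance is also set up correctly: subtracting $\E[\sigma_\ell]^2$ removes precisely the diagrams with no edge joining $\{1,2\}$ to $\{3,4\}$, leaving the sum over the set the paper calls $\mathcal{C}_{q_1-1,\ldots,q_4-1}$.

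The gap is in the mechanism you propose for estimating the surviving four-point integrals. The spanning-tree device of Lemma \ref{STIMA} bounds every off-tree factor $|G_{\ell;d}|^{k_{ij}}$ by $1$ and every tree edge by an $L^2$ moment; combined with Cauchy--Schwarz this yields $\Var(\sigma_\ell)=O(\ell^{-(d-1)/2})$ for all $d\geq 2$, which is the stated rate for $d\geq 3$ but gives only $O(\ell^{-1/2})$ when $d=2$, not the claimed $O(\ell^{-1})$. Your assertion that integrating along the tree reproduces $\ell^{-1}\1_{d=2}$ is therefore not correct as stated: discarding the off-tree edges is exactly the lossy step, and no choice of spanning tree repairs it. The paper's treatment of $d=2$ (Appendix \ref{d2}) is a genuinely different argument: it uses exact factorization identities for convolutions of Gaunt integrals (Lemmas \ref{GauntProd}, \ref{A_1}, \ref{B}, \ref{C}), which convert specific four-point integrals into \emph{products} of one-dimensional Gegenbauer moments without throwing any factor away, followed by an exhaustive case analysis over the number of nonzero exponents $k_{ij}$ (Lemmas \ref{lemmaI-1}--\ref{lemmaI-5}) establishing the uniform bound $|\mathfrak{I}_{q_1,\ldots,q_4,\kappa}(\ell)|\leq C\ell^{-3}$ of Proposition \ref{prop-I}. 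Without this (or an equivalently sharp input), your argument proves the variance bound of Theorem \ref{Conv0} only for $d\geq 3$.
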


As for \textbf{(H2)}, it is enough to prove that, uniformly in $\ell$, all the moments of the main Malliavin operators involved in $M_q(\X_{\ell},Z)$ are bounded.  This is why we will prove the following result.
\begin{proposition}\label{UnifLim}
Under Assumption \ref{ASSUMPTION}, for every $k \in \N$ and $n \geq 1$, there exists $\tilde C_{n,k,d}>0$ such that
$$
\sup_{\ell \mathrm{\ even}}\E[|D^{(k)} \X_\ell|^n_{\mathcal{H}^{\otimes k}}]\leq \tilde C_{n,k, d}
\quad\mbox{and}\quad
\sup_{\ell \mathrm{\ even} }\E[|D^{(k)} L\X_\ell|^n_{\mathcal{H}^{\otimes k}}]\leq \tilde C_{n,k, d}.
$$ 
\end{proposition}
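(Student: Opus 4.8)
The plan is to prove the two bounds simultaneously by exploiting the chaos/isonormal structure of $X_\ell$. Recall that $T_\ell$ admits the isonormal representation \eqref{spR} over $\mathcal{H}=L^2(\mathbb S^d,\B(\mathbb S^d),\Leb)$, so that $D_yT_\ell(x)$ is a kernel $h_{\ell,x}(y)$ with $\langle h_{\ell,x},h_{\ell,y}\rangle_{\mathcal H}=G_{\ell;d}(\langle x,y\rangle)$. Since $\X_\ell=(X_\ell-\E[X_\ell])/\sqrt{\var(X_\ell)}$ and $\var(X_\ell)\sim\frac{b_2^2}{2}\frac{\mu_d^2}{n_{\ell;d}}$ by Theorem \ref{ROS}, we have $D^{(k)}\X_\ell=(\var(X_\ell))^{-1/2}D^{(k)}X_\ell$ and $D^{(k)}L\X_\ell=(\var(X_\ell))^{-1/2}D^{(k)}LX_\ell$ (the operator $L$ annihilates the constant $\E[X_\ell]$). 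Hence it suffices to show that for all $k,n$ one has $\|D^{(k)}X_\ell\|_{L^n(\Omega;\mathcal H^{\otimes k})}=O(n_{\ell;d}^{-1/2})$, and likewise for $LX_\ell$; the factor $(\var(X_\ell))^{-n/2}\asymp n_{\ell;d}^{n/2}$ then cancels this. Differentiating \eqref{chXl} termwise (justified under Assumption \ref{ASSUMPTION} by \eqref{Dkphi}) yields the $\mathcal H^{\otimes k}$-valued chaos expansion
\begin{equation*}
D^{(k)}X_\ell=\sum_{p\ge 0}J_p,\qquad J_p:=\frac{b_{p+k}}{p!}\int_{\mathbb S^d}H_p(T_\ell(x))\,h_{\ell,x}^{\otimes k}\,dx,
\end{equation*}
with $J_p$ living in the $p$th Wiener chaos, and the analogous expansion for $D^{(k)}LX_\ell$ with $b_{p+k}$ replaced by $-(p+k)b_{p+k}$.

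By the chaos isometry, $\E[H_p(T_\ell(x))H_p(T_\ell(y))]=p!\,G_{\ell;d}(\langle x,y\rangle)^p$ and $\langle h_{\ell,x},h_{\ell,y}\rangle_{\mathcal H}^k=G_{\ell;d}(\langle x,y\rangle)^k$, so each chaos component has the exact norm
\begin{equation*}
\E\bigl[|J_p|^2_{\mathcal H^{\otimes k}}\bigr]=\frac{b_{p+k}^2}{p!}\,\cI_{p+k}(\ell),\qquad \cI_s(\ell):=\int_{(\mathbb S^d)^2}G_{\ell;d}(\langle x,y\rangle)^s\,dx\,dy.
\end{equation*}
Since $|G_{\ell;d}|\le G_{\ell;d}(1)=1$ on $[-1,1]$, pointwise $|G_{\ell;d}|^s\le |G_{\ell;d}|^2$ for $s\ge 2$, hence $0\le \cI_s(\ell)\le \cI_2(\ell)$; moreover $\cI_2(\ell)\sim\mu_d^2/n_{\ell;d}$ by \eqref{varXl} (equivalently Proposition \ref{GegProp}). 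For $k\ge 1$ the $p=0$ constant term is killed by $D^{(k)}$, while for $k=0$ it is removed by centering in $\X_\ell$; as $b_1=0$ and $\cI_1(\ell)=0$, only indices with $p+k\ge 2$ survive, whence $\E[|J_p|^2_{\mathcal H^{\otimes k}}]\le C\,n_{\ell;d}^{-1}\,b_{p+k}^2/p!$ uniformly in $p$ and $\ell$.

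Now I pass from $L^2$ to $L^n$ chaos by chaos. Hypercontractivity of the Ornstein--Uhlenbeck semigroup, valid for Hilbert-space valued elements of a fixed chaos, gives $\|J_p\|_{L^n(\Omega;\mathcal H^{\otimes k})}\le (n-1)^{p/2}\|J_p\|_{L^2(\Omega;\mathcal H^{\otimes k})}$ with an $\ell$-independent constant. Summing by Minkowski's inequality,
\begin{equation*}
\|D^{(k)}X_\ell\|_{L^n(\Omega;\mathcal H^{\otimes k})}\le \sum_{p\ge 0}(n-1)^{p/2}\|J_p\|_{L^2}\le C^{1/2}\,n_{\ell;d}^{-1/2}\sum_{p\ge 0}(n-1)^{p/2}\frac{|b_{p+k}|}{\sqrt{p!}}.
\end{equation*}
It remains to check the convergence of the last series: this is exactly where the full strength of Assumption \ref{ASSUMPTION} enters. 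Requiring $\phi(Z)\in\cap_{p\ge 2}\DD^{0,p}$, i.e. $\phi(Z)\in L^p(\bP)$ for every $p$, forces the nonnegative Hermite coefficients $|b_q|/q!$ of $\phi$ to decay faster than any Gaussian-square growth, which translates into $|b_q|/\sqrt{q!}=o(\delta^q)$ for every $\delta>0$; combined with $\sqrt{(p+k)!/p!}=O(p^{k/2})$ this makes $\sum_p(n-1)^{p/2}|b_{p+k}|/\sqrt{p!}$ converge for all $n,k$ (choose $\delta<(n-1)^{-1/2}$). Thus $\|D^{(k)}X_\ell\|_{L^n}=O(n_{\ell;d}^{-1/2})$, and dividing by $\sqrt{\var(X_\ell)}$ gives the first bound. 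The second bound is identical after replacing $b_{p+k}$ by $(p+k)b_{p+k}$: convergence of $\sum_p(n-1)^{p/2}(p+k)|b_{p+k}|/\sqrt{p!}$ is precisely guaranteed by the requirement in Assumption \ref{ASSUMPTION} that $L\phi(Z)$ (whose coefficients carry the extra factor $q$) also belong to all $\DD^{k,p}$.

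The main obstacle is this final convergence step, namely transferring the purely qualitative Malliavin regularity of Assumption \ref{ASSUMPTION} into the quantitative, $k$-uniform and $n$-arbitrary summability of the weighted coefficient series. This is exactly why the assumption is imposed in every $L^p$ and for the auxiliary absolute-value functions $\phi$ and $L\phi$, rather than merely for $\varphi\in L^2(\nu)$. Everything else is either an exact chaos computation or the elementary uniform domination $\cI_s(\ell)\le \cI_2(\ell)=O(n_{\ell;d}^{-1})$, which at once produces the correct $n_{\ell;d}^{-1/2}$ scale matching $\sqrt{\var(X_\ell)}$ and hence the cancellation yielding uniform boundedness.
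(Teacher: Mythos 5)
Most of your argument is sound: the termwise chaos expansion of $D^{(k)}X_\ell$, the exact $L^2$ identity $\E[|J_p|^2_{\mathcal H^{\otimes k}}]=\frac{b_{p+k}^2}{p!}\int_{(\SSd)^2}G_{\ell;d}(\<x,y\>)^{p+k}dxdy$, the uniform domination of the $s$th Gegenbauer moment by the second one, and hypercontractivity for Hilbert-space-valued chaoses are all correct and give the right $n_{\ell;d}^{-1/2}$ scale. The gap is the very last step, which you yourself flag as the main obstacle: the claim that Assumption \ref{ASSUMPTION} forces $|b_q|/\sqrt{q!}=o(\delta^q)$ for every $\delta>0$, so that $\sum_p(n-1)^{p/2}|b_{p+k}|/\sqrt{p!}$ converges for every $n$. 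This does not follow. Membership of $\phi(Z)$ (or of all its Malliavin derivatives) in every $L^p(\bP)$ only controls $L^2$-norms of chaos projections up to polynomial weights: $D^k\phi(Z)\in L^2$ for all $k$ gives $|b_q|/\sqrt{q!}=o(q^{-k/2})$ for every $k$, i.e.\ super-polynomial decay, not exponential decay. And $L^p$-integrability for all $p$ adds nothing of the required kind: the indicator $\1_{[u,\infty)}(Z)$ is bounded, hence in every $L^p$, yet its normalized Hermite coefficients decay only like $q^{-1/4}$; more to the point, $\varphi=\Phi$ (the standard Gaussian distribution function) has all derivatives bounded, so $\varphi(Z)\in\cap_{k}\cap_p\DD^{k,p}$, while $|b_q|/\sqrt{q!}\asymp 2^{-q/2}$ up to polynomial factors — a \emph{fixed} exponential rate, for which your series $\sum_q(n-1)^{q/2}|b_q|/\sqrt{q!}$ already diverges for $n\geq 3$. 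What you are actually invoking is the hypothesis of Proposition \ref{exponential} ($|b_q|\le CR^q$), which is a sufficient condition for Assumption \ref{ASSUMPTION}, not a consequence of it. The chaos-by-chaos hypercontractivity bound followed by the triangle inequality is simply too lossy to close under the stated hypotheses.

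The paper circumvents this entirely. It expands $\E[|D^{(k)}\X_\ell|^{2p}_{\mathcal H^{\otimes k}}]$ as a $2p$-fold sum over $q_1,\dots,q_{2p}$, applies the diagram formula (Lemma \ref{DIAGRAMMA}) to the expectation $\E[\prod_i H_{q_i-k}(T_\ell(x_i))]$, bounds every resulting Gegenbauer integral uniformly by $C_{d;p}\,\ell^{-(d-1)p}$ via the spanning-tree estimate (Lemma \ref{STIMA}) — this cancels $v_{\ell;d}^{-2p}$ — and then recognizes the leftover combinatorial sum, through the identity \eqref{PassDel}, as \emph{exactly} $\E[|D^k\phi(Z)|^{2p}]$, which is finite because that is precisely what Assumption \ref{ASSUMPTION} postulates. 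In other words, the qualitative assumption is consumed as a single finite moment of $D^k\phi(Z)$ (and of $D^kL\phi(Z)$ for the second bound), with no need to convert it into coefficient decay. If you want to keep your cleaner chaos-decomposition framework, you would have to either strengthen the hypothesis to the exponential bound of Proposition \ref{exponential}, or replace the Minkowski/hypercontractivity summation by an argument that compares the full $2p$th moment to $\E[|D^k\phi(Z)|^{2p}]$ as the paper does.
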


We postpone the proofs of Proposition \ref{Conv0} and of Proposition \ref{UnifLim} to Sections constructed ad hoc (see Section \ref{sect:Conv0} and Section \ref{sect:UnifLim} respectively). Based on such results, the proof of the CLT in Total Variation distance (Theorem \ref{mainThm}) follows. 

\medskip

\begin{proof}[Proof of Theorem \ref{mainThm} assuming Propositions \ref{Conv0} and \ref{UnifLim}]
We use Proposition \ref{Main} with $F=\X_{\ell}$, $G=Z$ and $U=2$. We have
$$
\dW(\sigma_\ell, 2)\leq \|\sigma_\ell- 2\|_1\leq \|\sigma_\ell- 2\|_2\leq \Var(\sigma_\ell)^{1/2}+|\E[\sigma_{\ell}]-2|\to 0
$$
and then, recalling the asymptotic behavior of $\sigma_\ell$ in Proposition \ref{Conv0} we obtain
\begin{equation}
\label{dWsigma}
\dW(\sigma_\ell, 2)=
\begin{cases}
O(\ell^{-1/2}) & d=2,3 \\
O(\ell^{-3/4}) & d=4 \\
O(\ell^{-1}) & d\geq 5
\end{cases}
\end{equation}
Since $G=Z\sim \mathcal N(0,1)$, $DG=1$, that gives $\sigma_G=1$, $D^ kG=0$ for every $k\geq 2$ and $LG=-G$, so that (see \eqref{mall0}-\eqref{mall}) $\mathcal{Q}_q(G) = \mathcal{Q}_1(G)<\infty$ for every $q\geq 1$. As for $\mathcal{C}_{q,1}(\X_{\ell})$, we 
have
\begin{align*}
\mathcal{C}_{q,1}(\X_\ell)&=\|\mathcal{C}_q(\X_\ell)\|_1=\E[\big(|\X_\ell|_{1,q+1}+|L\X_\ell|_{q}\big)^ q\big(1+|\X_\ell|_{1,q+1}\big)^ {4 q}]\\
&\leq \E[\big(|\X_\ell|_{1,q+1}+|L\X_\ell|_{q}\big)^{2q}]^{\frac{1}{2}}\E[\big(1+|\X_\ell|_{1,q+1}\big)^ {8 q}]^{\frac{1}{2}}\\
&\leq (\E[|\X_\ell|_{1,q+1}^{2q}]^{\frac{1}{2}}+\E[|L\X_\ell|^{2q}]^{\frac{1}{2}})(1+\E[|\X_\ell|_{1,q+1}^{8q}]^{\frac{1}{2}})
\end{align*}
and Proposition \ref{UnifLim} allows one to check that $\sup_{\ell \mathrm{\ even} }\mathcal{C}_{q,1}(\X_\ell)<\infty$ for every $q\geq 1$. 
Then, Theorem \ref{mainThm} ensures that, for $\varepsilon>0$, 
$$
\dTV(\X_\ell, Z)\leq \textrm{C}_\varepsilon\big(\dW(\X_\ell, Z)+\dW(\det \sigma_{\ell}, 2)\big)^{1-\varepsilon}.
$$
Now, combining the above estimate on $\dW(\sigma_\ell, 2)$ and the result on $\dW(\X_\ell, Z)$ in Theorem \ref{ROS}, we conclude the proof. 

\end{proof}

Comparing \eqref{dW-th} and \eqref{dWsigma}, when applying Proposition \ref{Main} the presence of $\dW(\sigma_\ell, 2)$  does not worsen the quantitative convergence rate for $\dTV(\X_\ell, Z)$: in fact, whenever $d\geq 2$ we obtain that $\dTV(\X_\ell, Z)= O_\varepsilon(\dW(\X_\ell, Z)^ {1-\varepsilon}) $, for any $\varepsilon>0$ close to 0. In other words, the term coming from the Malliavin covariance does not slow down the convergence speed. 
%
%
%
%
%
%
%

\section{Background on Gaussian random fields}\label{background}

In this Section we recall the isonormal representation for random hyperspherical harmonics along with the Wiener-It\^o chaos theory, finally we deal with Malliavin calculus for Gaussian fields. 

\subsection{Isonormal representation}
 
Let us recall an equivalent way to introduce isonormal Gaussian random fields.
We denote $H=L^2(\mathbb{S}^d,\B(\mathbb{S}^d), \Leb)$ the real separable Hilbert space of square integrable functions on $\mathbb{S}^d$ w.r.t. the Lebesgue measure, with inner product
$
\<f,g\>_H= \int_{\mathbb{S}^d} f(x) g(x) dx.
$
We define a Gaussian white noise on $\SSd$, that is a centered Gaussian family
$
W=\{W(A), A\in \B(\SSd), \Leb(A)<+\infty\}
$
such that for $A,B \in \B(\SSd)$, we have
$$
\E[W(A)W(B)]=\int_{\SSd} 1_{A\cap B} (x) dx.
$$
Once the noise $W$ has been introduced, an isonormal Gaussian random field $T$ on $H$ can be defined as follows: for $f\in H$,
\begin{equation}\label{gausField}
T(f)=\int_{\SSd} f(x) W(dx)
\end{equation}
that is the Wiener-Ito integral of $f$ with respect to $W$. It is well known that $T$ is an isonormal Gaussian field on $H$, whose covariance is given by
\begin{equation}\label{CovTf}
\Cov(T(f), T(g))=\<f,g\>_H.
\end{equation}
Following the above construction,  one can prove that the Gaussian field $T_\ell$ in \eqref{Tl} can be represented as
\begin{equation}\label{spR}
T_\ell(x)=\int_{\mathbb{S}^d} f_{x;\ell}(y) W(dy), \quad x\in \mathbb{S}^d,
\end{equation}
in which, for $x\in \mathbb{S}^d$ and $\ell\in \N$,
\begin{equation}\label{kernel}
f_{x;\ell}(y) = \sqrt{\frac{n_{\ell;d}}{\mu_d}} G_{\ell; d}(\<x,y\>), \quad y\in\SSd.
\end{equation} 
 We also recall that $\<x,y\>=\cos d(x,y)$, where  $d(x,y)$ denotes the spherical distance between $x,y \in \mathbb S^d$, cf. (\ref{covT}).
The representation in \eqref{spR} is known to hold in law, but this is enough for our purposes. Details on this representation can be found in \cite[Chapter 2]{NP12}.

Using \eqref{CovTf}, the covariance function of the random field $T_\ell$ is given by
\begin{equation}\label{covTl}
\E[T_\ell(x) T_\ell(y)] = G_{\ell; d}(\<x,y\>),\quad  x,y\in \SSd,
\end{equation}
in fact \eqref{covTl} is an immediate consequence of the following reproducing property: for every $x,y\in\SSd$,
\begin{equation}\label{G1}
\int_{\SSd} G_{\ell;d}(\<x,z\>)G_{\ell;d}(\<z,y\>)dz=\frac{\mu_d}{n_{\ell;d}}G_{\ell; d}(\<x,y\>).
\end{equation}
For later use, we recall the asymptotic behavior as $\ell\to\infty$ of the moments of the Gegenbauer polynomials, that we resume as follows  (proof details on the constants can be found in \cite[Proposition 1.1]{ROS20}).

\begin{proposition}\label{GegProp}
	
For $q\in\N$, $q\geq 2$, set
\begin{equation}\label{cqd}
c_{q;d}= 
\begin{cases}
\Big(2^ {\frac d2 -1}\Big(\frac d2-1\Big)! \Big)^ q\int_0^ \infty J_{\frac d2 -1}(u)^ q u^ {-q(\frac d2-1)+d-1}du& \mbox{ if } q\geq 3,\cr
\frac{(d-1)! \mu_d}{4\mu_{d-1}}&\mbox{                                     if } q=2,\\
\end{cases}
\end{equation}
where $J_{\frac d2 -1}$ is the Bessel function of order $\frac d2 -1$. 

For $q\geq 2$ and $d\geq 2$, the function $\SSd\ni y \mapsto \int_{\SSd} G_{\ell; d}(\<x,y\>)^q dx$ is constant. Moreover, the following properties hold.

For $d\geq 2$ one has $\int_{\SSd} G_{\ell; d}(\<x,y\>)^2 dx =\frac{\mu_d}{ n_{\ell;d} }$ and, as  $\ell\to\infty$,
\begin{equation}\label{asGeg2}
\int_{\SSd} G_{\ell; d}(\<x,y\>)^2 dx =
2\mu_{d-1} \frac{c_{2;d}}{\ell^{d-1}}(1+o_{2;d}(1)).
\end{equation}

Set now $q\geq 3$.  Then 
\begin{itemize}
	\item if $d\geq 3$, then
\begin{equation}\label{asGegq}
\int_\SSd G_{\ell;d} (\<x,y\>)^q dx = 2 \mu_{d-1}\frac{c_{q;d}}{\ell^d} (1+o_{q;d}(1));
\end{equation}

\item
if $d=2$, the behavior differs according to $q\neq 4$ (being as in \eqref{asGegq}) and $q= 4$: 
\begin{equation}\label{asLegq}
\int_\SS2 G_{\ell; 2}(\<x,y\>)^ qdx\equiv 
\int_\SS2
P_\ell(\<x,y\>)^q dx = 
\begin{cases}
\frac{12\log \ell}{\pi\ell^2}(1+o_{4;2}(1)) & q=4\\
\frac{4\pi c_{q;2}}{\ell^2}(1+o_{q;2}(1)) & q=3\mbox{ or } q\geq 5.
\end{cases}
\end{equation}

\end{itemize}

\end{proposition}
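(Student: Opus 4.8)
The plan is to reduce the surface integral to a one-dimensional weighted integral and then feed in the classical Hilb-type (Mehler--Heine) asymptotics for Gegenbauer polynomials. The constancy of $y\mapsto\int_{\SSd}G_{\ell;d}(\<x,y\>)^q\,dx$ is immediate from isotropy: for $g\in SO(d+1)$ one has $\<gx,gy\>=\<x,y\>$ and $\Leb$ is rotation invariant, so the substitution $x\mapsto gx$ shows the integral takes the same value at $y$ and at $gy$; transitivity of the action of $SO(d+1)$ on $\SSd$ then gives constancy. Fixing $y$ as a pole and integrating out the latitude sphere of radius $\sin\theta$ (which contributes a factor $\mu_{d-1}(\sin\theta)^{d-1}$), with $\<x,y\>=\cos\theta$, one gets
\[
\int_{\SSd}G_{\ell;d}(\<x,y\>)^q\,dx=\mu_{d-1}\int_0^\pi G_{\ell;d}(\cos\theta)^q(\sin\theta)^{d-1}\,d\theta=\mu_{d-1}\int_{-1}^1 G_{\ell;d}(t)^q(1-t^2)^{\frac d2-1}\,dt.
\]
Since $\ell$ is even, $G_{\ell;d}$ is even, so the integrand is symmetric about $\theta=\pi/2$ and the two poles $\theta=0,\pi$ contribute equally; this symmetry is the origin of the factor $2$ in \eqref{asGeg2}--\eqref{asGegq}. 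The case $q=2$ needs no asymptotics: putting $x=y$ in the reproducing identity \eqref{G1} and using $G_{\ell;d}(1)=1$ gives the exact value $\mu_d/n_{\ell;d}$, whence \eqref{asGeg2} follows from \eqref{n-ell} and the stated $c_{2;d}$.

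For $q\ge 3$ I would insert the Hilb asymptotic, which near a pole reads $G_{\ell;d}(\cos\theta)=2^{\frac d2-1}(\tfrac d2-1)!\,(\ell\theta)^{-(\frac d2-1)}J_{\frac d2-1}(\ell\theta)\,(1+o(1))$ uniformly for $\theta=O(1/\ell)$, and which in the bulk decays like $\ell^{-(d-1)/2}(\sin\theta)^{-(d-1)/2}$ times an oscillatory factor. Rescaling $\theta=u/\ell$ near each pole and using $\sin\theta\sim\theta$, the contribution of one pole becomes
\[
\mu_{d-1}\,\big(2^{\frac d2-1}(\tfrac d2-1)!\big)^q\,\frac{1}{\ell^{d}}\int_0^{\ell\pi}J_{\frac d2-1}(u)^q\,u^{-q(\frac d2-1)+d-1}\,du,
\]
and doubling for the second pole yields, in the limit, $2\mu_{d-1}c_{q;d}/\ell^{d}$ as soon as the Bessel integral defining $c_{q;d}$ in \eqref{cqd} converges.

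Whether it converges is decided by a short exponent count. At $u=0$ the integrand behaves like $u^{d-1}$, hence is integrable; at $u=\infty$ its amplitude is $u^{(d-1)(1-q/2)}$ times $\cos^q$, which is absolutely integrable when $(d-1)(q-2)>2$, while for \emph{odd} $q$ the oscillation of $\cos^q$ secures (conditional) convergence even in the borderline case $(d-1)(q-2)=2$ (for instance $d=3,q=3$). The unique \emph{even} borderline case is $d=2$, $q=4$, where $u\,J_0(u)^4\sim\frac{3}{2\pi^2u}$ on average, so the truncated integral $\int_0^{\ell\pi}$ grows like $\log\ell$; this produces precisely the logarithmic correction of \eqref{asLegq}, all remaining $q\ge3$ cases falling under \eqref{asGegq}.

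The delicate point, and the main obstacle, is to upgrade these heuristics to rigorous limits. One must control the error in the Hilb approximation uniformly down to the pole, show that the bulk region (where $G_{\ell;d}$ is small but the domain has length $O(1)$) contributes at a strictly smaller order and does not spoil the $\ell^{-d}$ scale, and justify passing the limit inside the rescaled integral by exhibiting an $\ell$-independent dominating function for $J_{\frac d2-1}(u)^q u^{-q(\frac d2-1)+d-1}$ on $[0,\ell\pi]$, using the monotone decay of the Bessel amplitude at infinity together with the $u^{d-1}$ behaviour at the origin; in the borderline $d=2,q=4$ case this is replaced by a matched-asymptotics argument between the Bessel (pole) region and the oscillatory bulk. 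The explicit values of $c_{q;d}$ and of the logarithmic prefactor then follow from known closed forms for Bessel moment integrals, for which I would refer to \cite[Proposition 1.1]{ROS20}.
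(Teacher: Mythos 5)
The paper does not actually prove Proposition \ref{GegProp}: it is imported from \cite[Proposition 1.1]{ROS20}, and the surrounding text only points the reader to that reference for the constants. Your sketch follows exactly the route taken in that reference (constancy by isotropy, reduction to the one-dimensional weighted integral with the factor $\mu_{d-1}(\sin\theta)^{d-1}$, the exact $q=2$ value via \eqref{G1}, Hilb/Mehler--Heine asymptotics near the poles with the rescaling $\theta=u/\ell$, and the exponent count deciding convergence of the limiting Bessel integral and isolating the logarithmic case $d=2$, $q=4$), and you correctly flag the only genuinely delicate steps (uniform error control in the Hilb approximation, domination to pass to the limit, control of the bulk), ultimately deferring them and the explicit constants to the same source the paper cites --- so in substance your proposal does the same thing the paper does, with a sound outline of the underlying argument added. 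One concrete item you should reconcile: pushing your own numbers through the $d=2$, $q=4$ case --- $uJ_0(u)^4\sim\tfrac{3}{2\pi^2 u}$ on average, one pole contributing $\mu_1\,\ell^{-2}\,\tfrac{3}{2\pi^2}\log\ell$ with $\mu_1=2\pi$, then doubling --- yields $\tfrac{6\log\ell}{\pi\ell^2}$ rather than the $\tfrac{12\log\ell}{\pi\ell^2}$ displayed in \eqref{asLegq}; this factor of $2$ is immaterial for the rest of the paper (only the order $\log\ell/\ell^2$ is ever used), but you should either locate the missing factor in your heuristic or check the stated constant against \cite[Proposition 1.1]{ROS20}.
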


\subsection{Wiener chaos expansion}

Let us recall the notion of Wiener chaos.
Let $H_q(x)$ denote the Hermite polynomial of degree $q\in\N$, defined by
$$
H_q(x)=
(-1)^q e^{\frac{x^2}{2}}\frac{d^q}{dx^q}(e^{-\frac{x^2}{2}}),\quad q\geq 1
$$
and $H_0(x)=1$ (\cite[\S 5.5]{Sze39}). The family of Hermite polynomials $\left(H_q \right)_{q\geq 0}$ is an orthogonal basis of $L^2(\R, \B(\R), \nu)$. 
For jointly Gaussian random variables $Z_1, Z_2 \sim \mathcal N(0,1)$, and $q_1,q_2\geq 0$ we have
\begin{equation}\label{hermiteCov}
\E[H_{q_1}(Z_1)H_{q_2}(Z_2)] = q_1 !\, \E[Z_1 Z_2]^{q_1} \1_{q_1=q_2}.
\end{equation}

Let $T$ denote a Gaussian random field generated by the Gaussian noise $W$ as in \eqref{gausField} and let $\F_T$ denote the $\sigma$-algebra generated by $T$: $\F_T=\sigma(T(f)\,:\,f\in H)$, where $H=L^2(\SSd, \B(\SSd), \Leb)$.
For any $q\in \N$ we consider the closure $\C_q$ in $L^2(\Omega,\F_T,\P)$ of the linear space generated by 
$$
\{H_q( T(f) )\,:\, f\in H \mbox{ with } \|f\|_{H}= 1\}.
$$
The space $\C_q$ is called the $q$-th Wiener chaos associated with $T$. From \eqref{hermiteCov}, it immediately follows that $\C_q \perp \C_{q'}$ when $q\neq q'$.
Then the following chaotic Wiener-Ito expansion holds (see e.g. \cite{NP12}): 
$$
L^2(\Omega, \F_T, \P)=\bigoplus_{q=0}^{+\infty} \C_q.
$$
This means that every $F\in L^2(\Omega, \F_T,\P)$ can be uniquely decomposed in $L^2(\P)$ as 
$$
F=\sum_{q\geq 0} J_q(F)
$$
where $J_q$ is the orthogonal projection operator on $\C_q$. 

Once the Wiener chaos expansion is defined, we can introduce the Ornstein-Uhlenbeck operator $L$, which will play an important role in our approach: for $F\in L^ 2(\Omega, \F_T,\P)$, one says that $F\in Dom(L)$ if and only if
$$
\sum_{q \geq 1} q^2\E[J_q(F)^ 2]<\infty
$$
and in such a case,
\begin{equation}\label{defL}
LF = -\sum_{q \geq 1} q J_q(F).
\end{equation}

\subsection{Malliavin calculus for Gaussian random fields}\label{sect:malliavin}

In this section we recall some definitions and properties of Malliavin calculus for Gaussian random fields --  details and results can be found fully explained in \cite[\S 2.3]{NP12} or \cite{Nua}. For $k\in\N$, we denote with $C^ k_p(\R^ m)$, respectively $C^ k_b(\R^ m)$, the set of functions $f:\R^ m\to\R$ that are continuously differentiable up to order $k$ and with polynomial growth, respectively bounded. And as usual,  $C^ \infty_p(\R^ m)=\cap_{k\geq 0}C^ k_p(\R^ m)$ and $C^ \infty _b(\R^ m)=\cap_{k\geq 0}C^ k_b(\R^ m)$.

Let $T$ be the isonormal Gaussian random field based on the Gaussian noise $W$ as in \eqref{gausField}. Let $\mathcal{S}$ denote the set of the simple functionals, being defined as the variables of the form 
\begin{equation}\label{smoothF}
f(T(g_1),\ldots, T(g_m))
\end{equation}
where $m\geq 1$, $f\in C^ \infty_p(\R^m)$ and $g_1,\ldots, g_m\in H$. 
It is well known that $\mathcal{S}$ is dense in $L^p(\Omega):=L^p(\Omega, \F_T, \P )$.

Given $k\in \N$, we denote with $H^{\otimes k}$ and $H^{\odot k}$, respectively, the $k$-th tensor product and the $k$-th symmetric tensor product of $H=L^ 2(\SSd,\B(\SSd), \Leb)$.
Let $F\in \mathcal{S}$ be given by \eqref{smoothF} and $k \in \N$. The $k$-th \textit{Malliavin derivative}  is the element of $L^2(\Omega; H^{\odot k})$ defined by
$$
D^{(k)} F=\sum_{i_1,\ldots, i_k=1}^m \frac{\partial^k f}{\partial x_{i_1}\cdots \partial x_{i_k}}(T(g_1),\ldots, T(g_m))\, g_{i_1} \otimes \cdots \otimes g_{i_m}.
$$
We recall that $D^{(k)}: \mathcal{S} \subset L^p(\Omega;\R)\hookrightarrow L^p(\Omega;H^{\odot k})$ is closable.
So, for $k\in \N$ and $p\geq 1$, one defines the space $\DD^{k,p}$ as the closure of $\mathcal{S}$ with respect to the norm 
$$
\|F \|_{\DD^{k,p}} =\big(\E[|F|^p]+\E[\|F\|_{H}^p]+\ldots +\E[\|D^{(k)} F\|_{H^{\otimes k}}^p]\big)^{\frac{1}{p}}
$$
and the Malliavin derivative can be extended to the set $\DD^{k,p}$, being the \textit{domain} of $D^{(k)}$ in $L^p(\Omega;\R)$. 
In particular, the space $\DD^{k,2}$ is a Hilbert space with respect to the inner product
$$
\<F,G\>_{\DD^{k,2}}=\E[FG]+\sum_{r=1}^k \E[\<D^{(r)}F,D^{(r)}G\>_{H^{\odot r}}].
$$
Moreover, the chain rule property does hold: for every $\phi\in C^ 1_b(\R^ m)$ and $F=(F_1,\ldots, F_m)$ with $F_i\in \DD^{1,p}$, $i=1,\ldots,m$, for some $p\geq 1$, then $\phi(F)\in\DD^{1,p}$ and 
\begin{equation}\label{chainRule}
D\phi(F)=\sum_{r=1}^k \frac{\partial \phi}{\partial x_r}(F) D F_r.
\end{equation}

It is well known that such a Malliavin calculus framework satisfies the abstract hypotheses required in \cite[Section 2.1]{BCP19} and resumed here in \S \ref{sect:proofs} (see e.g. \cite{Nua} or \cite{NP12}): just take $\mathcal{E}=\mathcal{S}$, 
	$\mathcal{H}=H=L^ 2(\SSd,\B(\SSd), \Leb)$ and $L$ as the Ornstein-Uhlenbeck operator defined in \eqref{defL}. In particular, the duality relationship \textbf{(M3)} does hold for $F,G\in\DD^ {2,2}$ and therefore, it holds true on $\mathcal{E}$.

To conclude, we give some formulas that will be used in the sequel. 
Let $x\in \SSd$ and $T_\ell(x)$ be the Gaussian random field in \eqref{spR}. As an immediate consequence of the chain rule \eqref{chainRule}, for  $q\in \N$ and $p\geq 1$ then $H_q(T_\ell(x))\in \DD^{1,p}$ and, from \eqref{smoothF} and \eqref{spR},
$$
D_y H_q(T_{\ell}(x))=q H_{q-1}(T_{\ell}(x)) D_yT_\ell(x)
=q H_{q-1}(T_{\ell}(x)) \sqrt{\frac{n_{\ell;d}}{\mu_d}} G_{\ell;d}(\<x, y \>).
$$
Iterating the argument, $H_q(T_\ell(x))\in \DD^{k,p}$ for every $k\in\N$ and
\begin{equation}\label{DkHp}
D^{(k)}_{y_1,\ldots, y_k } H_q(T_\ell (x)) =  \Big(\frac{n_{\ell;d}}{\mu_d}\Big)^{\frac{k}{2}}\frac{q!}{(q-k)!} H_{q-k}(T_\ell(x)) \prod_{r=1}^k G_{\ell;d}(\<x, y_r\>).
\end{equation}
Moreover, by developing standard density arguments, \eqref{DkHp} gives
 $\int_{\SSd} H_q(T_\ell (x)) dx\in \DD^{k,p}$ 
and
\begin{equation}\label{DkIntHp}
D^{(k)}_{y_1,\ldots, y_k } \int_\SSd H_q(T_\ell (x)) dx=  
\Big(\frac{n_{\ell;d}}{\mu_d}\Big)^{\frac{k}{2}}\frac{q!}{(q-k)!} \int_{\SSd}H_{q-k}(T_\ell(x)) \prod_{r=1}^k G_{\ell;d}(\<x, y_r\>) dx.
\end{equation}

\section{Convergence of Malliavin covariances}\label{sect:Conv0} 

In this section we prove Lemma \ref{Conv0}.
Recalling the (finite dimensional) chaos expansion for $\varphi(Z)$ in \eqref{chphi} and substituting it in \eqref{Xl}, we obtain the following expansion for $X_\ell$:
\begin{equation}\label{chaosXl}
X_\ell=m_{\ell;d}+\sum_{q \geq 2} \frac{b_q}{q!} \int_{\mathbb S^d}  H_q(T_\ell(x)) dx,
\end{equation}
where
$$
m_{\ell;d}=\E[X_\ell]=\E[\varphi(Z)] \mu_d .
$$
Equation \eqref{chaosXl} is really the chaos expansion of $X_\ell$. One has in fact, for every $f\in H$ and $p\neq q$, 
$
\E[ H_{p}(T(f))\int_{\mathbb S^d}  H_q(T_\ell(x)) dx]
=\int_{\mathbb S^d} \E[ H_{p}(T(f)) H_q(T_\ell(x))]dx=0.
$

Notice that \eqref{chaosXl} says that the projection on the chaos of order 1 is null, as briefly mentioned above. In fact, recalling \eqref{chphi}, by  \eqref{Tl}
$$
J_1(X_\ell)=b_1\int_{\SSd}T_\ell(x)dx=b_1\sqrt{\frac{\mu_d}{n_{\ell;d}}}\sum_{m=1}^{n_{\ell;d}} a_{\ell,m} \int_{\SSd} Y_{\ell,m;d}(x) dx=0.
$$
Following \eqref{chaosXl}, the chaos expansion of the normalized r.v. $\X_\ell$ is given by
\begin{equation}\label{chaos-tildeX}
\X_\ell=\frac 1{v_{\ell;d}}\,\sum_{q \geq 2} \frac{b_q}{q!} \int_{\mathbb S^d}  H_q(T_\ell(x)) dx,\mbox{ where }v^2_{\ell;d}=\Var(X_{\ell}).
\end{equation} 
Notice that, by \eqref{varXl} from Theorem \ref{ROS} and \eqref{n-ell}, 
\begin{equation}\label{v-ell}
v_{\ell;d}\sim b_2c_d \ell^{- \frac{d-1}{2}}.
\end{equation}

\subsection{Diagram formula and graphs}

Now we introduce some notation and results that are useful to prove Lemma \ref{Conv0}. We first study a different way to write down the well known diagram formula \cite[Proposition 4.15]{MPbook}. To this purpose, we need to introduce a set that we will use several times.

\begin{definition}\label{defA}
For $q_1,\ldots,q_n\in\N$, we define $\mathcal{A}_{q_1,\ldots,q_n}$ as the set given by the indexes $\{k_{ij}\}^n_{{ i,j=1}}$ such that for every $i,j=1,\ldots,n$, 
\begin{equation}\label{A}
\mbox{$k_{i,j}\in\N$, $k_{ii}=0$, $k_{ij}=k_{ji}$ \text{ and } $\sum_{j=1}^n k_{ij}=q_i$.}
\end{equation}
\end{definition}

\begin{lemma}\label{DIAGRAMMA}
Let $n\geq 2$ and let $(Z_1,\ldots, Z_n)$ be a $n$-dimensional centered Gaussian vector. For $q_1,\ldots,q_n\in\N$, consider  $\mathcal{A}_{q_1,\ldots,q_n}$ as in Definition \ref{defA}.  Then, 
\begin{equation}\label{eqDIAGRAMMA}
\E[\prod_{r=1}^n H_{q_r}(Z_r)]=\prod_{r=1}^n q_r! \times \sum_{\{k_{i,j}\}^n_{{ i,j=1}} \in \mathcal{A}_{q_1,\ldots,q_n}  } \prod_{\underset{i<j}{i,j=1}}^n\frac{\E[Z_iZ_j]^{k_{ij}}}{k_{ij}!}. 
\end{equation}
In particular, taking $Z_1=\cdots=Z_n=Z\sim \mathcal N(0,1)$, one has
\begin{equation}\label{PassDel}
\E[\prod_{r=1}^n H_{q_r}(Z)]=\prod_{r=1}^n q_r! \times \sum_{\{k_{i,j}\}^n_{{ i,j=1}} \in \mathcal{A}_{q_1,\ldots,q_n}  } \prod_{\underset{i<j}{i,j=1}}^n\frac{1}{k_{ij}!}. 
\end{equation}

\end{lemma}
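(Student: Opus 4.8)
The plan is to prove \eqref{eqDIAGRAMMA} by the classical generating-function (exponential) method, which produces the index set $\mathcal{A}_{q_1,\dots,q_n}$ automatically out of a single Gaussian Laplace-transform computation. Throughout I use that the marginals are standard, i.e. $\E[Z_r^2]=1$ for every $r$, as is the case everywhere in this paper (recall $\Var(T_\ell(x))=1$ and that the specialization sets $Z_1=\cdots=Z_n=Z\sim\mathcal N(0,1)$); it is precisely this normalization that forces the diagonal indices $k_{rr}=0$ to carry no covariance factor. First I would introduce the generating function
\[
F(t_1,\dots,t_n)=\E\Big[\prod_{r=1}^n \exp\big(t_r Z_r-\tfrac{t_r^2}{2}\big)\Big],\qquad (t_1,\dots,t_n)\in\R^n,
\]
and recall the Hermite identity $\exp(tz-t^2/2)=\sum_{q\ge0}\frac{t^q}{q!}H_q(z)$. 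Expanding each factor and interchanging sum and expectation gives
\[
F(t_1,\dots,t_n)=\sum_{q_1,\dots,q_n\ge0}\frac{t_1^{q_1}\cdots t_n^{q_n}}{q_1!\cdots q_n!}\,\E\Big[\prod_{r=1}^n H_{q_r}(Z_r)\Big].
\]

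Next I would compute $F$ in closed form. Pulling out the deterministic factor and using the Laplace transform of the centered Gaussian vector $(Z_1,\dots,Z_n)$,
\[
F(t_1,\dots,t_n)=\exp\Big(-\tfrac12\sum_{r}t_r^2\Big)\,\exp\Big(\tfrac12\sum_{i,j=1}^n t_it_j\,\E[Z_iZ_j]\Big).
\]
Since $\E[Z_r^2]=1$, the diagonal part $\tfrac12\sum_r t_r^2\E[Z_r^2]$ cancels $-\tfrac12\sum_r t_r^2$, leaving only the off-diagonal cross terms, whence
\[
F(t_1,\dots,t_n)=\exp\Big(\sum_{1\le i<j\le n}\E[Z_iZ_j]\,t_it_j\Big)=\prod_{1\le i<j\le n}\ \sum_{k_{ij}\ge0}\frac{\E[Z_iZ_j]^{k_{ij}}}{k_{ij}!}\,t_i^{k_{ij}}t_j^{k_{ij}}.
\]
Expanding the product over the $\binom{n}{2}$ pairs and collecting powers of each $t_r$ yields
\[
F(t_1,\dots,t_n)=\sum_{\{k_{ij}\}_{i<j}}\Big(\prod_{i<j}\frac{\E[Z_iZ_j]^{k_{ij}}}{k_{ij}!}\Big)\prod_{r=1}^n t_r^{\sum_{j}k_{rj}},
\]
where the outer sum runs over all families of nonnegative integers $\{k_{ij}\}_{i<j}$, extended to a symmetric array by $k_{ji}:=k_{ij}$ and $k_{rr}:=0$, and $\sum_j k_{rj}$ denotes the $r$-th row sum.

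Finally I would match the coefficient of the monomial $t_1^{q_1}\cdots t_n^{q_n}$ in the two expressions for $F$. On one side it equals $\frac{1}{q_1!\cdots q_n!}\E[\prod_r H_{q_r}(Z_r)]$; on the other, $\prod_r t_r^{\sum_j k_{rj}}$ equals $\prod_r t_r^{q_r}$ exactly when $\sum_j k_{rj}=q_r$ for every $r$, that is, exactly when $\{k_{ij}\}\in\mathcal{A}_{q_1,\dots,q_n}$ (Definition \ref{defA}). Equating coefficients and multiplying through by $\prod_r q_r!$ gives \eqref{eqDIAGRAMMA}; the specialization $Z_1=\cdots=Z_n=Z$ has $\E[Z_iZ_j]=1$ for all $i,j$, turning the covariance factors into $\prod_{i<j}\frac{1}{k_{ij}!}$ and yielding \eqref{PassDel}. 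The one genuinely delicate step is the interchange of the infinite sum and the expectation used to get the series for $F$: I would justify it by absolute summability, since the centered Gaussian has finite exponential moments (so the Laplace transform and the dominating series $\E[\prod_r\sum_{q_r}\frac{|t_r|^{q_r}}{q_r!}|H_{q_r}(Z_r)|]$ are finite for all $t$), whence Fubini and term-by-term identification of Taylor coefficients apply; equivalently, $F$ is real-analytic in $t$ and its coefficients are read off directly. An alternative, purely combinatorial route expands $\prod_r H_{q_r}(Z_r)$ by Wick's theorem and counts the diagrams with $k_{ij}$ edges between blocks $i$ and $j$, recovering \cite[Proposition 4.15]{MPbook} in the present notation; the generating-function argument simply packages this counting automatically.
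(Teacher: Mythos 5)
Your proof is correct, but it takes a genuinely different route from the paper's. The paper quotes the diagram formula \eqref{FormDiag} from \cite[Proposition 4.15]{MPbook} as its starting point and then performs a purely combinatorial count: for a fixed admissible family $\{k_{ij}\}$ it enumerates the no-flat diagrams having exactly $k_{ij}$ edges between rows $i$ and $j$ (multinomially partitioning the $q_i$ dots of each row and then matching dots across rows in $k_{ij}!$ ways), arriving at the multiplicity $\prod_r q_r!\,\prod_{i<j}1/k_{ij}!$. You instead derive the identity from scratch via the Hermite generating function $\exp(tz-t^2/2)=\sum_q \frac{t^q}{q!}H_q(z)$ and the Gaussian Laplace transform, so that the index set $\mathcal{A}_{q_1,\ldots,q_n}$ and the weights $1/k_{ij}!$ fall out of a multinomial expansion of $\exp\big(\sum_{i<j}\E[Z_iZ_j]t_it_j\big)$ rather than from diagram counting. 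Your route is more self-contained (it does not presuppose the diagram formula) and your Fubini justification via the bound $\sum_q\frac{|t|^q}{q!}|H_q(z)|\leq C\e^{|t|^2+|t||z|}$ is sound — indeed it is the same estimate the paper uses in the proof of Proposition \ref{exponential}. One point in your favour worth keeping: you make explicit the hypothesis $\E[Z_r^2]=1$, which is needed for the diagonal cancellation (and equally, though tacitly, for the no-flat diagram formula the paper invokes); without it the statement is false already for $n=2$, $q_1=2$, $q_2=0$. This normalization holds in all the paper's applications since $\Var(T_\ell(x))=1$.
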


The proof of Lemma \ref{DIAGRAMMA} is postponed to Appendix \ref{Diag}. Let us see how we apply such result.

For fixed $n\geq 2$ and $x_1,\ldots,x_n \in \SSd $ , the random vector $(T_\ell(x_1),\ldots , T_\ell( x_n ))$ is a centered Gaussian random vector whose covariances are given by (see \eqref{covTl})
$$
\E[T_\ell(x_i)T_\ell(x_j)]=G_{\ell;d}(\<x_i,x_j\>), \quad i,j=1,\ldots,n.
$$
When dealing with our proofs, we often need to compute and/or estimate quantities of the type 
$$
\int_{(\SSd)^ n}\E[\prod_{r=1}^n H_{q_r}(T_\ell(x_r))]dx.
$$
By using \eqref{eqDIAGRAMMA}, we have
\begin{equation}\label{eqDIAGRAMMA2}
\int_{(\SSd)^ n}\E[\prod_{r=1}^n H_{q_r}(T_\ell(x_r))]dx=\prod_{r=1}^n q_r! \times \sum_{\{k_{i,j}\}^n_{{ i,j=1}} \in \mathcal{A}_{q_1,\ldots,q_n}  } \int_{(\SSd)^ n}\prod_{\underset{i<j}{i,j=1}}^n\frac{G_{\ell;d}(\<x_i,x_j\>)^{k_{ij}}}{k_{ij}!} dx. 
\end{equation}
Therefore, it would be very useful to get a good estimate for the integrals appearing in the r.h.s of \eqref{eqDIAGRAMMA2}, that is, when the integrand function is the product of a number of powers of Gegenbauer polynomials. 
To this purpose we need to introduce the concept of \textit{extrapolated graph} from  a given $\kappa=\{k_{ij}\}^n_{{ i,j=1}}\in \mathcal{A}_{q_1,\ldots,q_n}$. Such graph is defined as the pair $\mathfrak{G}_\kappa=(V,E_\kappa)$ in which the set of the nodes is given by $V=\{1,\ldots,n\}$ and the set of the edges is given as follows: the edge $(i,j)$ does exist iff $k_{ij}\neq 0$ (notice that, since $k_{ii}=0$, there are no self-loops).  The use of graphs is a key point in our approach, that is why in Appendix \ref{sect:graph} we recall the main definitions and properties.

\begin{lemma}\label{STIMA}
For $n\in \N^*$, let $\kappa=\{k_{ij}\}^{n}_{{ i,j=1}}\in \mathcal{A}_{q_1,\ldots,q_{n}}$ be fixed. Let $\mathfrak{G}_\kappa$ denote the extrapolated graph from $\kappa$ and $N_\kappa$ denote the number of connected components of $\mathfrak{G}_\kappa$. Then,
\begin{equation}\label{stimaGRAFI}
\int_{(\SSd)^{n}} \prod_{\underset{i<j}{i,j= 1}}^{n} G_{\ell;d}(\<x_i,x_j\>)^{2k_{ij}} dx \leq \frac{C_{d}(N_\kappa) }{\ell^{(d-1)(n-N_\kappa) }}
\end{equation}
where $C_{d}(N_\kappa)=(8 \mu_d \mu_{d-1} c_{2;d})^{n-N_\kappa} \mu_d^{N_\kappa}$, $c_{2;d}$ being given in \eqref{cqd}. As a consequence, for $n=2p$, 
\begin{equation}\label{connessiSTIMA}
\int_{(\SSd)^{2p}} \prod_{\underset{i<j}{i,j= 1}}^{2p} G_{\ell;d}(\<x_i,x_j\>)^{2k_{ij}} dx\leq \frac{C_{d;p}}{\ell^{(d-1)p}},
\end{equation}
where  $C_{d;p}=(2 (d-1)!\mu_d^2)^{2p} \mu_d^p$. 
\end{lemma}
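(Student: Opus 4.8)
The plan is to exploit the graph structure to reduce the full integrand to a product over the edges of a spanning forest of $\mathfrak{G}_\kappa$, and then to integrate this reduced product one vertex at a time. The starting observation is that $G_{\ell;d}(\langle x_i,x_j\rangle)$ is the correlation between the unit-variance Gaussians $T_\ell(x_i)$ and $T_\ell(x_j)$ (see \eqref{covTl}), so by Cauchy--Schwarz $|G_{\ell;d}(t)|\le 1$ for all $t\in[-1,1]$. Consequently, for every pair $(i,j)$ one has $G_{\ell;d}(\langle x_i,x_j\rangle)^{2k_{ij}}\le 1$, and whenever $k_{ij}\ge 1$ also $G_{\ell;d}(\langle x_i,x_j\rangle)^{2k_{ij}}=\big(G_{\ell;d}(\langle x_i,x_j\rangle)^2\big)^{k_{ij}}\le G_{\ell;d}(\langle x_i,x_j\rangle)^2$. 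Choosing a spanning forest $F$ of $\mathfrak{G}_\kappa$ (one spanning tree per connected component, hence $n-N_\kappa$ edges in total), bounding the non-forest factors by $1$ and the forest factors by $G_{\ell;d}^2$ gives
\begin{equation*}
\prod_{\substack{i,j=1\\ i<j}}^{n} G_{\ell;d}(\langle x_i,x_j\rangle)^{2k_{ij}}\le \prod_{(i,j)\in F} G_{\ell;d}(\langle x_i,x_j\rangle)^2 .
\end{equation*}

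Next I would integrate the right-hand side. Since distinct connected components share no vertices, the integral over $(\SSd)^n$ factorizes as a product over the $N_\kappa$ components, so it suffices to treat a single spanning tree on $m$ vertices (with $m-1$ edges). For such a tree I would peel off leaves one at a time: if $v$ is a leaf with unique neighbour $u$, then by the $q=2$ case of Proposition \ref{GegProp} the inner integral $\int_\SSd G_{\ell;d}(\langle x_v,x_u\rangle)^2\,dx_v=\mu_d/n_{\ell;d}$ is a constant independent of $x_u$; integrating out $x_v$ therefore produces a factor $\mu_d/n_{\ell;d}$ and leaves a spanning tree on $m-1$ vertices. Iterating across all $m-1$ edges and integrating the last surviving variable against $\int_\SSd dx=\mu_d$ yields $(\mu_d/n_{\ell;d})^{m-1}\mu_d$ for that tree. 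Taking the product over the $N_\kappa$ components and using $\sum_c(m_c-1)=n-N_\kappa$, I obtain
\begin{equation*}
\int_{(\SSd)^n}\prod_{(i,j)\in F} G_{\ell;d}(\langle x_i,x_j\rangle)^2\,dx=\Big(\frac{\mu_d}{n_{\ell;d}}\Big)^{n-N_\kappa}\mu_d^{\,N_\kappa}.
\end{equation*}

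To reach \eqref{stimaGRAFI} I would convert the factor $\mu_d/n_{\ell;d}=\int_\SSd G_{\ell;d}^2\,dx$ into a bound of the correct order in $\ell$: by \eqref{asGeg2} this equals $2\mu_{d-1}c_{2;d}\,\ell^{-(d-1)}(1+o(1))$, so for $\ell$ large the generous prefactor gives $\mu_d/n_{\ell;d}\le 8\mu_d\mu_{d-1}c_{2;d}\,\ell^{-(d-1)}$ (the factors $8$ and $\mu_d$ absorb the asymptotic constant $2$ and the $1+o(1)$). Raising this to the power $n-N_\kappa$ produces exactly $C_d(N_\kappa)\,\ell^{-(d-1)(n-N_\kappa)}$ with $C_d(N_\kappa)=(8\mu_d\mu_{d-1}c_{2;d})^{n-N_\kappa}\mu_d^{N_\kappa}$, which is \eqref{stimaGRAFI}. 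For the consequence \eqref{connessiSTIMA} with $n=2p$, the key point is that in the regime of interest every vertex has positive degree (each $q_r\ge 1$), so no component is a singleton and hence $N_\kappa\le p$; this forces $(d-1)(n-N_\kappa)\ge (d-1)p$. Finally, inserting the value $c_{2;d}=\tfrac{(d-1)!\,\mu_d}{4\mu_{d-1}}$ from \eqref{cqd} gives the simplification $8\mu_d\mu_{d-1}c_{2;d}=2(d-1)!\mu_d^2$, and a crude use of $\mu_d\ge 1$ together with $N_\kappa\le p$ bounds $C_d(N_\kappa)$ by $C_{d;p}=(2(d-1)!\mu_d^2)^{2p}\mu_d^{\,p}$.

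The principal difficulty lies in organizing the vertex-by-vertex integration correctly, and in particular in ensuring that each elimination step returns a quantity that does not depend on the surviving variables: this is exactly guaranteed by the constancy statement in Proposition \ref{GegProp}, which is what makes the leaf-peeling induction close. The reduction to a spanning forest via $|G_{\ell;d}|\le 1$ is what converts an a priori intractable product of arbitrary powers into this clean tree computation; keeping track of the exact edge count $n-N_\kappa$ and verifying $N_\kappa\le p$ for the even-order consequence are the remaining bookkeeping points.
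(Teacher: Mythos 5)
Your proposal is correct and follows essentially the same route as the paper's own proof: extracting the graph, factorizing over connected components, reducing to a spanning tree per component by bounding non-tree factors by $1$ and tree factors by $G_{\ell;d}^2$, peeling leaves using the constancy of $\int_{\SSd} G_{\ell;d}(\<x,y\>)^2\,dx=\mu_d/n_{\ell;d}$, and closing with the edge count $n-N_\kappa$ and the observation $N_\kappa\le p$ when no vertex is isolated. No substantive differences to report.
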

The proof of Lemma \ref{STIMA} can be found in Appendix \ref{sect:graph}. 
Let us remark that, in principle, (\ref{connessiSTIMA}) might be useful to get some estimates on concatenated sums of products of so-called Clebsch-Gordan coefficients $\lbrace C^{L,M}_{\ell_1 ,m_1, \ell_2, m_2}\rbrace$ that we define by 
\begin{equation*}\label{CG}
Y_{\ell_1, m_1;d}(x) Y_{\ell_2,m_2;d}(x) = \sum_{L=0}^{\ell_1+\ell_2}\sum_{M=1}^{n_{L;d}} C^{L,M}_{\ell_1, m_1, \ell_2, m_2;d} Y_{L,M;d}(x),\quad x\in 	\mathbb S^d.
\end{equation*}
This is because there exists a precise link between such quantities and moments of Gegenbauer polynomials which can be established via the addition formula \eqref{covT}. 
However, it is not clear whether it is actually possible to obtain optimal or novel estimates, even if in dimension $ d> 2 $ a little is known about these coefficients. (See \cite[Section 3.5]{MPbook} for a complete discussion in the case of the $2$-sphere).

\subsection{Proof of Theorem \ref{Conv0}}\label{3.2}

We are now in a position to prove Theorem \ref{Conv0}, that is the main result on the convergence in Wasserstein distance for the Malliavin covariances of $\tilde X_\ell$, as $\ell\to +\infty$.  Let us anticipate that 
the proof requires a finer different method for the case $d=2$ than $d\ge 3$. Therefore, as it will be clear from reading the proof, we will have to split in two different approaches.

\smallskip

\begin{proof}[Proof of Theorem \ref{Conv0}]
By using \eqref{DkIntHp} (with $k=1$) and classical density arguments, the Malliavin derivative $D\tilde X_\ell: \Omega \to H$ is given by
$$
D_y \X_\ell
=\frac{1}{v_{\ell;d}} \sqrt{\frac{n_{\ell;d}}{\mu_d}} \sum_{q\geq 2} \frac{b_q}{(q-1)!}\int_{\mathbb{S}^d} H_{q-1}(T_\ell(x))  G_{\ell; d}(\<x,y\>) dx.
$$
Following \eqref{MallCov}, with $n=1$ and $\mathcal{H}=H=L^ 2(\SSd,\B(\SSd), \Leb)$, we can write down  the Malliavin covariance $\sigma_\ell$ of $\X_\ell$:
\begin{align*}
\sigma_\ell
=&\int_{\mathbb{S}^d} |D_y \X_\ell|^2 dy= \frac{1}{v_{\ell;d}^2}\frac{n_{\ell;d}}{\mu_d}\sum_{q, p\geq 2} \frac{b_q b_p}{(q-1)!(p-1)!}\times \\
&\times  \int_{\mathbb{S}^d} \int_{(\mathbb{S}^d)^2}H_{q-1}(T_\ell(x))  H_{p-1}(T_\ell(z))  G_{\ell; d}(\<x,y\>)G_{\ell; d}(\<z,y\>) dx dz dy.
\end{align*}
By using the duplication formula \eqref{G1}, we obtain

\begin{equation}\label{sigma}
\sigma_\ell
=\frac{1}{v_{\ell;d}^2}\sum_{q, p\geq 2} \frac{b_q b_p}{(q-1)!(p-1)!}  \int_{(\mathbb{S}^d)^2} H_{q-1}(T_\ell(x))  H_{p-1}(T_\ell(z)) G_{\ell; d}(\<x,z\>) dxdz.
\end{equation}
Therefore, by \eqref{covTl},
\begin{align*}
\E[\sigma_\ell]&=\frac{1}{v_{\ell;d}^2}\sum_{q, p\geq 2} \frac{b_q b_p}{(q-1)!(p-1)!}  \int_{(\mathbb{S}^d)^2} \E[H_{q-1}(T_\ell(x))  H_{p-1}(T_\ell(z)) ] G_{\ell; d}(\<x,z\>) dxdz\\
&=\frac{1}{v_{\ell;d}^2}\sum_{q\geq 2} \frac{b_q^2}{(q-1)!}  \int_{(\mathbb{S}^d)^2}  G_{\ell; d}(\<x,z\>)^q dxdz.
\end{align*}
Then, from the asymptotics for moment of Gegenbauer polynomials in Proposition \ref{GegProp} and from \eqref{v-ell}, we have that 
$$
\E[\sigma_\ell]-2= O\left(  \1_{d\geq 3} \frac{1}{\ell}+ \1_{d=2}\left( \1_{b_4 \neq 0}\frac{\log \ell}{\ell}+ \1_{b_4=0} \frac{1}{\ell}      \right )\right)
$$ as $\ell \to \infty$. In the above result we underline that the difference between $d=2$ and $d\geq 3$ changes the asymptotic behavior when $b_4\neq 0$.

Now we study the variance of $\sigma_\ell$.
Denoting with $dx:=dx_1 dx_2 dx_3 dx_4$, we have that 
\begin{align*}
&\E[\sigma_\ell^2]=\\
&\frac{1}{v_{\ell;d}^4} \sum_{q_1,q_2, q_3, q_4\geq 2} \Big(\prod_{j=1}^4 \frac{b_{q_j}}{(q_j-1)!} \Big)\int_{(\mathbb S^d)^4 } \E[\prod_{i=1}^4 H_{q_i-1}(T_\ell(x_i))] G_{\ell;d}(\<x_1,x_2\>)G_{\ell;d}(\<x_3,x_4\>)dx.
\end{align*} 
By using Lemma \ref{DIAGRAMMA}, we have
\begin{align*}
&\E[\sigma_\ell^2]=\\
&\frac{1}{v_{\ell;d}^4} \sum_{q_1,q_2, q_3, q_4\geq 2} \Big(\prod_{j=1}^4 \frac{b_{q_j}}{(q_j-1)!} \Big)\prod_{r=1}^4 (q_r-1)!  \times \\
&\sum_{\{k_{i,j}\}^4_{{ i,j=1}} \in \mathcal{A}_{q_1-1,\ldots,q_4-1}  }
 \prod_{\underset{i<j}{i,j=1}}^4\frac{1}{k_{ij}!} \int_{(\mathbb S^d)^4 }\prod_{\underset{i<j}{i,j=1}}^4 G_{\ell;d}(\<x_i,x_j\>)^{k_{ij}} G_{\ell;d}(\<x_1,x_2\>)G_{\ell;d}(\<x_3,x_4\>)dx.
\end{align*} 
First of all, when $q_1=q_2$ and $q_3=q_4$ we have
\begin{align*}
&\sum_{q_1 q_3\geq 2} b_{q_1}^2 b_{q_3}^2  \sum_{\{k_{i,j}\}^4_{{ i,j=1}} \in \mathcal{A}_{q_1-1,\ldots, q_3-1}  } \prod_{\underset{i<j}{i,j=1}}^4\frac{1}{k_{ij}!}\times \\
&\times  \int_{(\mathbb S^d)^4 }\prod_{\underset{i<j}{i,j=1}}^4 G_{\ell;d}(\<x_i,x_j\>)^{k_{ij}} G_{\ell;d}(\<x_1,x_2\>)G_{\ell;d}(\<x_3,x_4\>)dx.
\end{align*}
 Now, when $k_{13}=k_{14}=k_{23}=k_{24}=0$, one gets $k_{12}=q_1-1$ and $k_{34}=q_3-1$. So it remains
 \begin{align*}
&\sum_{q_1 q_3\geq 2} \frac{b_{q_1}^2 b_{q_3}^2}{(q_1-1)!(q_3-1)!}\int_{(\mathbb S^d)^4 } G_{\ell;d}(\<x_1,x_2\>)^{q_1}G_{\ell;d}(\<x_3,x_4\>)^{q_3}dx.
\end{align*}  
This is exactly $v_{\ell;d}^4\, \E[\sigma_\ell]^2$.
Then we define 
$\mathcal{N}_{q_1-1, q_2-1, q_3-1, q_4-1}$ as the set of $\kappa=\{k_{ij}\}_{i,j=1}^ 4$ such that
$$
\mbox{ $k_{12}=q_1-1=q_2-1, \,\,\,  k_{34}=q_3-1=q_4-1,\,\,\, k_{13}=k_{14}=k_{23}=k_{24}=0$ }
$$
and we set
$$
\mathcal{C}_{q_1-1,q_2-1,q_3-1, q_4-1}=\mathcal{A}_{q_1-1,\ldots,q_4-1}\setminus \mathcal{N}_{q_1-1, q_2-1, q_3-1, q_4-1}.
$$
Hence
\begin{align*}
&\Var(\sigma_\ell )=\E[\sigma_\ell^ 2]-\E[\sigma_\ell]^ 2
=\frac{1}{v_{\ell;d}^4} \sum_{q_1,q_2, q_3, q_4\geq 2} \prod_{i=1}^ 4\frac{b_{q_i}}{(q_i-1)!} \prod_{r=1}^4 (q_r-1)! \times\\
&\times  \sum_{\{k_{i,j}\}^4_{{ i,j=1}} \in \mathcal{C}_{q_1-1,\ldots,q_4-1}  } \prod_{\underset{i<j}{i,j=1}}^4\frac{1}{k_{ij}!} \int_{(\mathbb S^d)^4 }\prod_{\underset{i<j}{i,j=1}}^4 G_{\ell;d}(\<x_i,x_j\>)^{k_{ij}} G_{\ell;d}(\<x_1,x_2\>)G_{\ell;d}(\<x_3,x_4\>)dx,
\end{align*}
so that
\begin{align}
&\Var(\sigma_\ell )
\leq \frac{1}{v_{\ell;d}^4} \sum_{q_1,q_2, q_3, q_4\geq 2} \prod_{i=1}^ 4\frac{|b_{q_i}|}{(q_i-1)!} \prod_{r=1}^4 (q_r-1)! \times\nonumber\\
&\times  \sum_{\{k_{i,j}\}^4_{{ i,j=1}} \in \mathcal{C}_{q_1-1,\ldots,q_4-1}  } \prod_{\underset{i<j}{i,j=1}}^4\frac{1}{k_{ij}!} \Big|\int_{(\mathbb S^d)^4 }\prod_{\underset{i<j}{i,j=1}}^4 G_{\ell;d}(\<x_i,x_j\>)^{k_{ij}} G_{\ell;d}(\<x_1,x_2\>)G_{\ell;d}(\<x_3,x_4\>)dx\Big|\label{appoggio}
\end{align}
We now prove that there exists $c>0$ such that for every $\{k_{i,j}\}^n_{{ i,j=1}} \in \mathcal{C}_{q_1-1,\ldots,q_4-1} $,
\begin{equation}\label{app1}
\Big|\int_{(\SSd)^4} \prod_{\underset{i<j}{i,j=1}}^4 G_{\ell;d}(\<x_i,x_j\>)^{k_{ij}} G_{\ell;d}(\<x_1,x_2\>)G_{\ell;d}(\<x_3,x_4\>)dx\Big|
\leq \frac{c_d}{\ell^{2d-2+\frac{d-1}{2}}}.
\end{equation}  
\eqref{app1} will follow by applying Lemma \ref{STIMA}. For a fixed  $\kappa=\{k_{ij}\}_{i,j=1}^ 4\in  \mathcal{C}_{q_1-1,\ldots,q_4-1} $,let $N_\kappa$ be the number of the connected components of the extrapolated graph $\mathfrak{G}_\kappa$. We observe that $N_\kappa\in\{1,2\}$, recall in fact that by \eqref{defA} for any $i$ there exists at least an index $j\neq i$ such that $k_{ij}>0$ (here, $q_i-1\geq 1$ for every $i$). So, we split our reasoning according to $N_\kappa=1$ and $N_\kappa=2$.

\smallskip

\textbf{Case 1: $N_\kappa=1$.} By using the Cauchy-Schwarz inequality we have
\begin{align*}
&\Big|\int_{(\SSd)^4} \prod_{\underset{i<j}{i,j=1}}^4 G_{\ell;d}(\<x_i,x_j\>)^{k_{ij}} G_{\ell;d}(\<x_1,x_2\>)G_{\ell;d}(\<x_3,x_4\>)dx\Big|\leq \\
&\Big(\int_{(\SSd)^4} \prod_{\underset{i<j}{i,j=1}}^4 G_{\ell;d}(\<x_i,x_j\>)^{2k_{ij}} dx\Big)^ {1/2}
\Big(\int_{(\SSd)^4} G_{\ell;d}(\<x_1,x_2\>)^2G_{\ell;d}(\<x_3,x_4\>)^2dx\Big)^{1/2}
\end{align*}  
Estimating the first factor by means of \eqref{stimaGRAFI} with $N_\kappa=1$ and computing the second factor by means of \eqref{asGeg2}, straightforward computations give \eqref{app1}

\smallskip

\textbf{Case 2: $N_\kappa=2$.}  Figure \ref{N=2} shows all possible extrapolated graphs having $2$ connected components. We 
notice that the graph in (a) is extrapolated by an element $\kappa=\{k_{ij}\}_{i,j=1}^ 4$ belonging to $\mathcal{N}_{q_1-1,\ldots,q_4-1} $. Such indexes have been already deleted, so we study the cases shown in (b) and in (c).
\begin{center}
	\begin{figure}[h]
		{\footnotesize 	\begin{center}
				\begin{tikzpicture}
				\node[shape=circle,draw](A) at (1,3) {$1$};
				\node[shape=circle,draw](B) at (3,3) {$2$};
				\node[shape=circle,draw](C) at (3,1) {$3$};
				\node[shape=circle,draw](D) at (1,1) {$4$};
				\draw (A) -- (B);
				\draw (C) -- (D);
				\draw (2,0) node{(a)};
				\node[shape=circle,draw](A) at (4.5,3) {$1$};
				\node[shape=circle,draw](B) at (6.5,3) {$2$};
				\node[shape=circle,draw](C) at (6.5,1) {$3$};
				\node[shape=circle,draw](D) at (4.5,1) {$4$};
 				\draw (A) -- (C);
                \draw (B) -- (D);
 
				\draw (5.5,0) node{(b)};
				\node[shape=circle,draw](A) at (8,3) {$1$};
				\node[shape=circle,draw](B) at (10,3) {$2$};
				\node[shape=circle,draw](C) at (10,1) {$3$};
				\node[shape=circle,draw](D) at (8,1) {$4$};
				\draw (A) -- (D);
                \draw (B) -- (C);
				\draw (9,0) node{(c)};
				\end{tikzpicture}
						\end{center}
		}
		\caption{\small All possible extrapolated graphs $\mathfrak{G}_\kappa$ from $\kappa=\{k_{ij}\}_{i,j=1}^4$  having exactly two connected components. }\label{N=2}
	\end{figure}
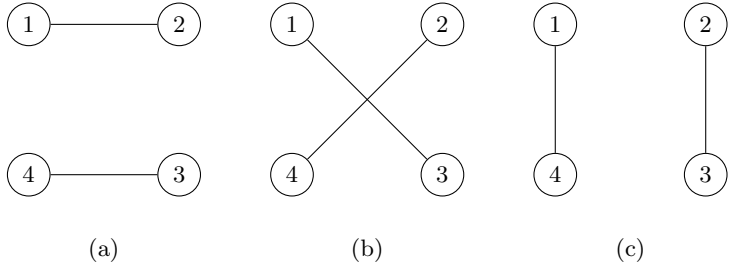
\end{center}
As for case (b), we have
\begin{align*}
&\Big|\int_{(\SSd)^4} \prod_{\underset{i<j}{i,j=1}}^4 G_{\ell;d}(\<x_i,x_j\>)^{k_{ij}} G_{\ell;d}(\<x_1,x_2\>)G_{\ell;d}(\<x_3,x_4\>)dx\Big|\\
&=\Big|\int_{(\SSd)^4} G_{\ell;d}(\<x_1,x_2\>)G_{\ell;d}(\<x_1,x_3\>)^{q_1-1}G_{\ell;d}(\<x_2,x_4\>)^{q_2-1} G_{\ell;d}(\<x_3,x_4\>) dx\Big|.
\end{align*}
Assume that $q_1=2$ or $q_2=2$. W.l.g we set $q_1=2$. Then, using \eqref{G1}, we have
\begin{align*}
&\Big|\int_{(\SSd)^4} G_{\ell;d}(\<x_1,x_2\>)G_{\ell;d}(\<x_1,x_3\>)G_{\ell;d}(\<x_2,x_4\>)^{q_2-1} G_{\ell;d}(\<x_3,x_4\>) dx_1dx_2dx_3dx_4\Big|\\
&=\Big|  \frac{\mu_d}{n_{\ell;d}} \int_{(\SSd)^3} G_{\ell;d}(\<x_2,x_3\>)G_{\ell;d}(\<x_2,x_4\>)^{q_2-1} G_{\ell;d}(\<x_3,x_4\>) dx_2dx_3dx_4 \Big|\\
&=\Big|  \frac{\mu_d^2}{n_{\ell;d}^2} \int_{(\SSd)^2}G_{\ell;d}(\<x_2,x_4\>)^{q_2}  dx_2dx_4 \Big|\\
&\leq \frac{\mu_d^2}{n_{\ell;d}^2}\int_{(\SSd)^2}G_{\ell;d}(\<x_2,x_4\>)^{2}  dx_2dx_4 \leq \frac{c_d}{\ell^{3d-3}},
\end{align*}
the last inequality following from \eqref{asGeg2}. If instead $q_1\geq 3$ and $q_2 \geq 3$,
\begin{align*}
&\Big|\int_{(\SSd)^4} G_{\ell;d}(\<x_1,x_2\>)G_{\ell;d}(\<x_1,x_3\>)^{q_1-1}G_{\ell;d}(\<x_2,x_4\>)^{q_2-1} G_{\ell;d}(\<x_3,x_4\>) dx\Big|\\
&\leq \int_{(\SSd)^4} G_{\ell;d}(\<x_1,x_3\>)^2 G_{\ell;d}(\<x_2,x_4\>)^2 |G_{\ell;d}(\<x_3,x_4\>)| dx
\end{align*}
By integrating first w.r.t $x_1$, then w.r.t. $x_2$ and by using \eqref{asGeg2}, we get
\begin{align*}
&\Big|\int_{(\SSd)^4} G_{\ell;d}(\<x_1,x_2\>)G_{\ell;d}(\<x_1,x_3\>)^{q_1-1}G_{\ell;d}(\<x_2,x_4\>)^{q_2-1} G_{\ell;d}(\<x_3,x_4\>) dx\Big|\\
&
\leq \frac{c_d}{\ell^{2d-2}} \int_{(\SSd)^2} |G_{\ell;d}(\<x_3,x_4\>)| dx_3dx_4.
\end{align*}
Now we use the Cauchy-Schwarz inequality and again apply  \eqref{asGeg2}. We finally obtain \eqref{app1}.

Case (c) in Figure \ref{N=2} can be treated analogously, so \eqref{app1} finally holds.

Coming back to the study of $\Var(\sigma_\ell)$, we use \eqref{v-ell}, we insert the estimates \eqref{app1} in \eqref{appoggio} and we have 
$$
\Var(\sigma_\ell )
\leq \frac{c_d \, \ell^{2d-2}}{\ell^{2d-2+\frac{d-1}{2}}} \sum_{q_1,q_2, q_3, q_4\geq 2} \prod_{i=1}^ 4\frac{|b_{q_i}|}{(q_i-1)!}
 \prod_{r=1}^4 (q_r-1)! \sum_{\{k_{i,j}\}^4_{{ i,j=1}} \in \mathcal{A}_{q_1-1,\ldots,q_4-1}  } \prod_{\underset{i<j}{i,j=1}}^4\frac{1}{k_{ij}!} 
$$
We now use \eqref{PassDel}: for $Z\sim \mathcal N(0,1)$,
\begin{align*}
\Var(\sigma_\ell )
&\leq \frac{c_d \, \ell^{2d-2}}{\ell^{2d-2+\frac{d-1}{2}}} \sum_{q_1,q_2, q_3, q_4\geq 2} \prod_{i=1}^ 4\frac{|b_{q_i}|}{(q_i-1)!}
\E\Big[\prod_{r=1}^4 H_{q_r-1}(Z)\Big]\\
&=  \frac{c_d }{\ell^{\frac{d-1}{2}}} \E\Big[\Big(\sum_{q \geq 2}\frac{|b_q|}{(q-1)!}H_{q-1}(Z)\Big)^ 4\Big].
\end{align*}
By Assumption \ref{ASSUMPTION},
\begin{align*}
\Var(\sigma_\ell )
&\leq \frac{c_d }{\ell^{\frac{d-1}{2}}}\E[|D \phi(Z)|^4]\stackrel{\ell\to\infty}{\longrightarrow}0.
\end{align*}
This concludes the proof for the case $d\geq 3$. If $d=2$, the above bound still holds but is not enough to prove that $\Var(\sigma_\ell )=O(\ell^ {-1})$, allowing us to reach the optimal bound in Theorem \ref{mainThm}.	Since the proof strategy for the estimation of the integrals for $d=2$ is different and needs a long analysis, we postpone this case to Appendix \ref{d2} (see Proposition \ref{propDIM2}).
\end{proof} 

\section{Uniform boundedness of Malliavin-Sobolev norms}\label{sect:UnifLim}

This section is devoted to the proof of Proposition \ref{UnifLim}, that is, all moments of $|D^{(k)} \X_\ell|_{\mathcal{H}^{\otimes k}}$ and of $|D^{(k)} L\X_\ell|_{\mathcal{H}^{\otimes k}}$ are uniformly bounded in $\ell$.

\medskip

\begin{proof}[Proof of Proposition \ref{UnifLim}]  Without loss of generality, we can assume that $n$ is even. So, we fix $k\in\N$ and $n=2p$, $p\in\N$.  We first prove that $\sup_{\ell \mathrm{\ even}}\E[|D^{(k)} \X_\ell|^{2p}_{\mathcal{H}^{\otimes k}}]$ is finite.

Recall the chaos expansion \eqref{chaos-tildeX} and by using \eqref{DkIntHp}, it easily follows that
$$
D_{y_1,\ldots y_k}^{(k)} \X_\ell= \frac{1}{v_{\ell;d}} \left(\frac{n_{\ell;d}}{\mu_d}\right)^{\frac k2} \sum_{q\geq 2\vee k} \frac{b_q}{(q-k)!}\int_{\mathbb{S}^d} H_{q-k}(T_\ell(x)) \prod_{i=1}^k G_{\ell; d}(\<x,y_i\>) dx. 
$$
Thus,
\begin{align*}
&\E[|D^{(k)} \X_\ell|^{2 p}_{\mathcal{H}^{\otimes k}}]=\E\Big[\Big(\int_{(\mathbb{S}^d)^k} |D^{(k)}_{y_1,\ldots,y_k} \X_\ell|^2 dy_1 dy_2 \ldots dy_k\Big)^{p}\Big]\\
&= \frac{1}{v_{\ell;d}^{2 p}} \E\Big[\Big(\sum_{q_1,q_2 \geq 2 \vee k} \frac{b_{q_1} b_{q_2}}{(q_1-k)!(q_2-k)!} \int_{(\mathbb{S}^d)^2}H_{q_1-k}(T_\ell(y)) H_{q_2-k}(T_\ell(z)) G_{\ell; d}(\<y, z\>)^k dydz \Big)^p  \Big]\\
&=\frac{1}{v_{\ell;d}^{2p}}  \sum_{  q_i \geq 2 \vee k, i=1,\ldots,2p} \prod_{i=1}^{2p} \frac{b_{q_i}}{(q_i-k)!} \prod_{r=1}^{2p} (q_r-k)! \sum_{\{k_{i,j}\}^{2p}_{\underset{i<j}{ i,j=1}} \in A_{q_1-k,\ldots,q_{2p}-k} } \prod_{\underset{i<j}{i,j=1}}^{2p}\frac{1}{k_{i,j}!}\times\\
&\times \int_{(\mathbb{S}^d)^{2p}} \prod_{\underset{i<j}{i,j=1}}^{2p} G_{\ell;d}(\<x_i,x_j\>)^{k_{i,j}} \prod_{s=1}^p G_{\ell; d}(\<x_s, x_{s+p}\>)^k dx,
\end{align*}  
in which we have used \eqref{eqDIAGRAMMA}. We start to estimate the integrals in the above r.h.s.
Using the Cauchy Schwarz inequality and \eqref{connessiSTIMA}, it follows that
\begin{align*}
&\Big|\int_{(\mathbb{S}^d)^{2p}} \prod_{\underset{i<j}{i,j=1}}^{2p} G_{\ell;d}(\<x_i,x_j\>)^{k_{i,j}} \prod_{s=1}^p G_{\ell; d}(\<x_s, x_{s+p}\>)^k dx\Big|\leq \\
&\Big(\int_{(\mathbb{S}^d)^{2p}} \prod_{\underset{i<j}{i,j=1}}^{2p} G_{\ell;d}(\<x_i,x_j\>)^{2k_{i,j}} dx \int_{(\SSd)^{2p}}\prod_{s=1}^p G_{\ell; d}(\<x_s, x_{s+p}\>)^{2k} dx\Big)^{\frac{1}{2}}\leq \frac{C_{d;p}}{\ell^{(d-1)p}}.
\end{align*}
We insert the above estimate and, by using the asymptotics of $v_{\ell;d}$ in \eqref{v-ell} and the representation \eqref{PassDel}, it follows that
\begin{align*}
&\E[|D^{(k)} \X_\ell|^{2 p}_{\mathcal{H}^{\otimes k}}]\leq \\
&\leq\frac{1}{v_{\ell;d}^{2p}}\frac{C_{d;p}}{\ell^{(d-1)p}} \sum_{  q_i \geq 2 \vee k, i=1,\ldots,2p} \left( \prod_{i=1}^{2p} \frac{|b_{q_i}|}{(q_i-k)!}\right) \prod_{r=1}^{2p} (q_r-k)! \sum_{\{k_{i,j}\}^{2p}_{\underset{i<j}{ i,j=1}} \in A_{q_1-k,\ldots,q_{2p}-k} } \prod_{\underset{i<j}{i,j=1}}^{2p}\frac{1}{k_{i,j}!}\\
&\leq \frac{\mathrm{Const} \big(\ell^{(d-1)p}+o(\frac{1}{\ell^{(d-1)p}})\big)}{\ell^{(d-1)p}} \times\sum_{  q_i \geq 2 \vee k, i=1,\ldots,2p} \left( \prod_{i=1}^{2p} \frac{|b_{q_i}|}{(q_i-k)!}\right) \E[\prod_{r=1}^{2p} H_{q_r-k} (Z)]\\
&= \mathrm{Const}(1+o(1))\E\Big[\Big|
\sum_{q \geq 2 \vee k}\frac{|b_q|}{(q-k)!} H_{q-k}(Z)\Big|^{2p}\Big].
\end{align*}
Hereafter $\mathrm{Const}$ denotes a positive constant, possibly changing from a line to another and possibly depending on $d$ and $p$ but  independent of $\ell$. Now, by \eqref{DkTphi} in Assumption \ref{ASSUMPTION}, 
$$
\E\Big[\Big|
\sum_{q \geq 2 \vee k}\frac{|b_q|}{(q-k)!} H_{q-k}(Z)\Big|^{2p}\Big]=\E[|D^ k\phi(Z)|^ {2p}]<\infty,
$$
so that
$$
\sup_{\ell \,\mathrm{even}}\E[|D^{(k)} \X_\ell|^{2 p}_{\mathcal{H}^{\otimes k}}]\leq \mathrm{Const}\, \E[|D^ k\phi(Z)|^ {2p}]<\infty.
$$

Concerning the study of $\E[|D^{(k)} L\X_\ell|^{2p}_{\mathcal{H}^{\otimes k}}]$, by using \eqref{defL} we have
$$
L\X_\ell=-\frac{1}{v_{\ell;d}} \sum_{q\geq 2} \frac{b_q}{(q-1)!} \int_{\SSd} H_q(T_\ell(x))dx.
$$
By comparing this expansion with \eqref{chaos-tildeX}, one deduces that one can repeat the same computations with $b_q$ replaced by $-qb_q$. Hence, 
$$
\sup_{\ell \,\mathrm{even}}\E[|D^{(k)}L \X_\ell|^{2 p}_{\mathcal{H}^{\otimes k}}]\leq \mathrm{Const}\, 
\E\Big[\Big|
\sum_{q \geq 2 \vee k}\frac{q|b_q|}{(q-k)!} H_{q-k}(Z)\Big|^{2p}\Big]
= \mathrm{Const}\, \E[|D^ kL\phi(Z)|^ {2p}]
$$
and this is finite again because of \eqref{DkTphi} in Assumption \ref{ASSUMPTION}. This concludes the proof.

\end{proof}

\appendix

\section{Appendix A}\label{A1}

In this Section we collect some technical proofs. 

\subsection{On Assumption \ref{ASSUMPTION}}\label{appendix1}

\begin{proof}[Proof of Proposition \ref{exponential}]
	It suffices to prove that if there exist $C,R>0$ such that $|a_q|\leq CR^ q$ for every $q\geq 0$ then $\sum_{q\geq 0} \frac{|a_{q}|}{q!} |H_{q}(Z)|$ converges in $L^2(\R,\B(\R),\nu)$ to a r.v. having  all moments. 
	
	We recall that
	$$
	H_q(z)=\sum_{n=0}^ {\lfloor q/2\rfloor} q!\, \frac{(-1)^ n}{2^ nn! (q-2n)!}\, z^ {q-2n},
	$$
	hence
	$$
	|H_q(z)|\leq \sum_{n=0}^ {\lfloor q/2\rfloor} q!\, \frac{1}{2^ nn! (q-2n)!}\, |z|^ {q-2n}.
	$$
	By splitting the cases $q$ even and $q$ odd and by inserting the above estimate, we have
	\begin{align*}
	\sum_{q\geq 0} \frac{|a_{q}|}{q!} |H_{q}(Z)|
	&
	\leq \sum_{q\geq 0} \sum_{n=0}^ {q} \frac{CR^{2q}}{2^ nn! (2q-2n)!}\, |Z|^ {2q-2n}
	+\sum_{q\geq 0} \sum_{n=0}^ {q} \frac{CR^{2q+1}}{2^ nn! (2q+1-2n)!}\, |Z|^ {2q+1-2n}\\
	&
	\leq C\sum_{n\geq 0}\frac{R^ {2n}}{2^ nn!}\sum_{q\geq n}\Big(\frac{(R|Z|)^ {2q-2n}}{ (2q-2n)!}+\frac{(R|Z|)^ {2q+1-2n}}{ (2q+1-2n)!}\Big)\\
	&
	= C\sum_{n\geq 0}\frac{R^ {2n}}{2^ nn!}\sum_{m\geq 0}\frac{(R|Z|)^ {m}}{ m!}
	=C \e^ {R^2+R|Z|}.
	\end{align*} 
	Since the above r.v. has got any moment, the statement follows.

\end{proof}

\subsection{Proof of Lemma \ref{DIAGRAMMA}}\label{Diag} 

This section is devoted to the proof of Lemma \ref{DIAGRAMMA}. First of all, let us recall the diagram formula \cite[Proposition 4.15]{MPbook}. Let $n\geq 1$ and $(Z_1,\ldots,Z_n)$ be a centered Gaussian vector in $\R^ d$. Then for every $q_1,\ldots,q_n\in\N$ one has
\begin{equation}\label{FormDiag}
\E[\prod_{r=1}^n H_{q_r}(Z_r)]=\sum_{G\in \Gamma_{\overline{F}}(q_1,\ldots,q_n)} \prod_{1\leq i<j\leq n} \E(Z_i Z_j)^{k_{ij}(G)}
\end{equation}
where $\Gamma_{\overline{F}}(q_1,\ldots,q_n)$ is the set of no-flat diagram of order $(q_1,\ldots, q_n)$ and $k_{ij}(G)$ is the number of edges from row $i$ to row $j$ of the diagram. Let us recall (see \cite[\S 4.3.1]{MPbook}, in particular the figure at page 97) that a diagram $G$ of order $(q_1,\ldots, q_n)$ is a set of points $\{(i,h): 1\leq i\leq n, 1\leq h\leq q_i\}$ called $\mathit{vertices}$ and a partition of these points into pairs
$$
\{ ((i, h),(j, k)) : 1\leq i\leq j\leq n;\,\,\, 1\leq h \leq q_i,\,\,\, 1\leq k \leq q_j        \}
$$
called $\mathit{edges}$, such that $(i,h)\neq(j,k)$ (self loops are not allowed) and moreover, every vertex of the diagram  is linked to one and only one vertex through an edge. One can graphically represent $G$ by a set of $n$ rows, where the $i$th row contains $q_i$ dots. The $h$th dot (from left to right) of the $i$th row represents the point $(i,h)$. The edges of the diagram are represented as lines connecting the two corresponding dots. A diagram is \textit{no-flat} if for all edges $((i,h),(j,k))$ we have $i\neq j$. It graphically means that we can connect only dots that are in two different rows.

\medskip

\begin{proof}[Proof of Lemma \ref{DIAGRAMMA}]
We start from the diagram formula \eqref{FormDiag}. For a diagram in  $\Gamma_{\overline{F}}(q_1,\ldots,q_n)$, let $R_i$ denote its $i$th row, $i=1,\ldots, n$. Consider the the first row $R_1$. In $R_1$ we have $q_1$ dots; we fix a partition of $q_1$ dots in $n-1$ groups of dots. We order the groups and denote them $R_{1j}$, $j=2,\ldots, n$: $R_{1j}$ is the group of dots in $R_1$ that are linked with dots in the $j$th row. We denote with $k_{1j}$ the number of dots in $R_{1j}$, that coincides with the number of edges connecting row $1$ with row $j$. We fix $k_{12}\in\{0,\ldots, q_1 \}$. There are $\binom{q_1}{k_{12}}$ choices for $k_{12}$ dots in the first row. In general for $j=3,\ldots n$, we fix $k_{1j}=0,\ldots, (q_1-\sum_{h=2}^{j-1} k_{1h}) $ to have that 
$$
\sum_{j=2}^n k_{1j}=q_1.
$$
For $j=3,\ldots,n$ there are $\binom{q_1-\sum_{h=2}^{j-1} k_{1h}}{k_{1j}}$ choices for $k_{1j}$. Then, the number of choices of $\{ k_{1,j} \}_{j=2}^n$ according to the above condition is
$$
\prod_{j=1}^n \binom{q_1-\sum_{r=1}^{j-1} k_{1r}}{k_{1j}}= \frac{q_1!}{\prod_{j=1}^n k_{1j}!}.
$$ 
We recall that $k_{ii}=0$ for $i=1,\ldots, n$ because we are considering no-flat diagrams. In practice we have computed the number of partitions of $q_1$ dots in $n-1$ groups. We can do the same for the other rows. And so we have that the number of partition of $q_i$ that is 
$$
\prod_{j=1}^n \binom{q_i-\sum_{r=1}^{j-1} k_{ir}}{k_{ij}}= \frac{q_i!}{\prod_{j=1}^n k_{ij}!}.
$$ 
Notice that $k_{ij}=k_{ji}$. Now we are able to compute the number of diagrams for fixed $\{k_{ij}\}_{i,j =1}^n$. We recall that $k_{ij}$ represent the number of dots of the $i$th row and of the $j$th row that are linked. There are $k_{ij}!$ way to match the dots.
Then the number of no-flat diagrams for a fixed $\{k_{ij}\}_{i,j =1}^n$ is 
$$
\prod_{i=1}^n \frac{q_i!}{\prod_{j=1}^n k_{ij}!} \prod_{\underset{r<s}{r,s=1}}^n k_{rs}!=\prod_{r=1}^n q_r! \prod_{\underset{i<j}{i,j=1}}^n \frac{1}{k_{ij}}.
$$
In order to conclude, it remains to determine the set of all admissible $\{k_{ij}\}_{i,j =1}^n$. Recalling that, for a fixed no-flat  diagram, $k_{ij}$ is the number of edges connecting row $i$ with row $j$, then of course $k_{ij}=k_{ji}$. Moreover, $k_{ii}=0$ because the diagram is no-flat  and $\sum_{j=1}^n k_{ij}=q_i$ for every $i$, as every vertex belongs to a unique edge. This means that $\{k_{ij}\}_{i,j=1}^ n\in \mathcal{A}_{q_1,\ldots,q_n}$ (see Definition \ref{defA}). The statement now follows.

\end{proof}

\subsection{Proof of Lemma \ref{STIMA}}\label{sect:graph}

Before presenting the proof of Lemma \ref{STIMA}, we start by recalling some elementary concepts of graph theory \cite{Va17}.

A \textit{graph} is a set of point called \textit{nodes} linked together by lines called \textit{edges}. Formally, a graph is a pair $\mathfrak{G}=(V,E)$ of sets, where $V$ is the set of nodes and $E$ is the set of edges. We can identify $E$ with a subset of $V\times V$. Precisely, if $V=\{x_1,\ldots,x_n\}$ and there exists an edge between $x_i$ and $x_j$, then the pair $(x_i,x_j)\in E$.    
A \textit{subgraph} of $\mathfrak{G}=(V,E)$ is a graph $\mathfrak{G}'=(V',E')$ where $V'\subset V$ and $E'$ is the set of all the edges of $E$ that link only nodes in $V'$. We say that a node $x$ has degree $m$ if there are $m$ edges that are incident to $x$, the case $m=0$ meaning that the node is isolated.

A \textit{path} between two nodes $x,y$ of $\mathfrak{G}$ is a sequence of edges connecting $x$ with $y$ and joining a sequence of distinct nodes, so, in particular, all edges of the path are distinct. 
We say that two nodes $x,y$ of a graph $\mathfrak{G}$ are \textit{connected} if $\mathfrak{G}$ contains a path between $x$ and $y$. A graph is said to be \textit{connected} if every pair of nodes in the graph is connected. A \textit{connected component} of a graph $\mathfrak{G}$ is  connected subgraph of the graph that is maximal. We can consider a graph as the union of its connected components.

In our treatment we are interested in a particular class of connected graphs: the trees. A \textit{tree} is a connected graph where each pair of nodes is connected by exactly one path. We first observe that in a tree there exists a non-empty subset of nodes with degree $1$. In fact, equivalently, a tree  is a connected graph in which every subgraph (and in particular the graph itself) contains at least one node with degree $1$. 
Hence when we delete some of $1$ degree nodes, the subgraph that we obtain is also a tree, that has again a subset of new $1$ degree nodes. If we progressively delete the $1$ degree nodes, we finally obtain a empty graph.

The last and most important property (for our treatment) of connected graphs is the following: a connected graph $\mathfrak{G}$ always contains a \textit{spanning tree}, i.e. a subgraph of $\mathfrak{G}$ that is a tree and contains all nodes of $\mathfrak{G}$.
Now we prove Lemma \ref{STIMA}.

\medskip

\begin{proof}[Proof of Lemma \ref{STIMA}] 
Let $\kappa=\{k_{ij}\}_{i,j=1}^{n}\in \mathcal{A}_{q_1-1,\ldots, q_{n}-1}$ (see \eqref{eqDIAGRAMMA}).
%
We extrapolate from $\kappa$ the graph $\mathfrak{G}=(V,E)$ with $ V=\{x_1,\ldots, x_{n}\}$ and $(x_i,x_j)\in E$ iff	$k_{ij}\neq0$. We recall that for every $i=1,\ldots,n$, $k_{ii}=0$, then there are no self loops in $\mathfrak{G}$. 

Let $N$ be the number of the connected components of $\mathfrak{G}$ and we denote with $\mathfrak{G}_h$, $h=1,\ldots, N$ these components. We denote with $m_h$ the number of nodes in $\mathfrak{G}_h$. Then $\sum_{h=1}^N m_h=n$.
We observe that if $x_i$ is a node of $\mathfrak{G}_{h_1}$, $x_j$ is a node of $\mathfrak{G}_{h_2}$ and $h_1\neq h_2$ then $k_{ij}=0$. This justifies the following equality:
\begin{equation}\label{integrale}
\begin{array}{l}
\int_{(\SSd)^{n}} \prod_{\underset{i<j}{i,j= 1}}^{n} G_{\ell;d}(\<x_i,x_j\>)^{2k_{ij}} dx_1\dots dx_{n}\\
=\prod_{h=1}^N \int_{(\SSd)^{m_h}} \prod_{x_{i_r}, x_{i_s} \in \mathfrak{G}_h} G_{\ell ; d}(\<x_{i_r}, x_{i_s})^{2k_{i_r i_s}} dx_{i_1}\ldots dx_{i_{m_h}}.
\end{array}
\end{equation}
Now we observe that if $\mathfrak{G}_h$ is a tree, the $1$ degree nodes of $\mathfrak{G}_h$ are the variables $x_{i_r}$ 
 for which there exists one and only one $i_s$  such that $k_{i_r i_s} \neq 0$. Hence there is one and only one polynomial $G_{\ell;d}$ in the variable $x_{i_r}$ in the integral \eqref{integrale}.
We identify the action of deleting  $1$ degree nodes with that of integrating the polynomial $G_{\ell;d}(\<x_{i_r}, x_{i_s}\>)$ in the variable $x_{i_r}$. 
Our connected components are not always trees, but we know that there always exists the spanning tree. So for all $\mathfrak{G}_h$, we consider the spanning tree $\tilde{ \mathfrak{G}}_h$, and delete the edges of $\mathfrak{G}_h$ that are not in $\tilde{\mathfrak{G}}_h$. This deleting operation corresponds, when studying the integral in the r.h.s. of \eqref{integrale}, with the estimate $|G_{\ell;d}(\<x_{i_r}, x_{i_s}\>)|\leq 1 $ for each pair $(x_{i_r}, x_{i_s})$ giving the deleted edge.
Since in a tree with $m_h$ nodes there are $m_h-1$ edges, the resulting estimate consists in integrating $m_h-1$ polynomials.

It follows that
\begin{eqnarray*}
\int_{(\SSd)^{n}} \prod_{\underset{i<j}{i,j= 1}}^{n} G_{\ell;d}(\<x_i,x_j\>)^{2k_{ij}} dx
&=&\prod_{h=1}^N \int_{(\SSd)^{m_h}} \prod_{x_{i_r}, x_{i_s} \in \mathfrak{G}_h} G_{\ell ; d}(\<x_{i_r}, x_{i_s})^{2k_{i_r i_s}} dx_{i_1}\ldots dx_{i_{m_h}}\nonumber\cr
&\leq&
\prod_{h=1}^N \int_{(\SSd)^{m_h}} \prod_{x_{i_r}, x_{i_s} \in \tilde{ \mathfrak{G}}_h} G_{\ell ; d}(\<x_{i_r}, x_{i_s})^{2k_{i_r i_s}} dx_{i_1}\ldots dx_{i_{m_h}}\nonumber\cr
&\leq&
\prod_{h=1}^N \int_{(\SSd)^{m_h}} \prod_{x_{i_r}, x_{i_s} \in \tilde {\mathfrak{G}}_h} G_{\ell ; d}(\<x_{i_r}, x_{i_s})^{2} dx_{i_1}\ldots dx_{i_{m_h}}\nonumber\cr
&
\leq&
\prod_{h=1}^N  \frac{(8 \mu_d \mu_{d-1} c_{2;d} )^{m_h}\mu_d^{N}}{\ell^{(d-1)(m_h-1)}}
=\frac{(8 \mu_d \mu_{d-1} c_{2;d})^{n-N} \mu_d^{N}}{\ell^{(d-1)(\sum_{h=1}^Nm_h-N)}}\nonumber\cr
&=&\frac{(8 \mu_d \mu_{d-1} c_{2;d})^{n-N} \mu_d^{N}}{\ell^{(d-1)(n-N)}}.
\end{eqnarray*}
We end by observing that the maximum number $N$ of connected components in a graph that contains $n=2p$ nodes is $p$, when there aren't $0$ degree nodes. Moreover there are exactly $p$ connected components when all subgraph contains exactly $2$ nodes. Then, being $8 \mu_d \mu_{d-1} c_{2;d}>1$, we have
\begin{equation}
\int_{(\SSd)^{2p}} \prod_{\underset{i<j}{i,j= 1}}^{2p} G_{\ell;d}(\<x_i,x_j\>)^{2k_{ij}} dx\leq \frac{C_{d;p}}{\ell^{(d-1)p}}
\end{equation}
where $C_{d;p}=(2 (d-1)!\mu_d^2)^{2p} \mu_d^p$, thus concluding the proof.

\end{proof}

\section{Appendix B}\label{d2} 

We recall that in Section \ref{3.2} we proved that $\Var(\sigma_\ell)=O(\ell^{-(d-1)/2})$ for $d\geq 2$. When $d=2$, it gives $\Var(\sigma_\ell)=O(\ell^{-1/2})$, which is not enough fine for our purposes. This section is then devoted to the following improved result:
	\begin{proposition}\label{propDIM2}
		Let $\sigma_\ell$ be the Malliavin covariance of $\X_\ell$. Under Assumption \ref{ASSUMPTION}, for $d=2$ we have
		\begin{equation}\label{eqVarDIM2}
		\Var(\sigma_\ell )= O\left ( \ell^{-1}\right ). 
		\end{equation}
	\end{proposition}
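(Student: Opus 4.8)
The plan is to return to the bound \eqref{appoggio} on $\Var(\sigma_\ell)$ established inside the proof of Theorem \ref{Conv0}, now specialized to $d=2$ (so that $G_{\ell;2}=P_\ell$), and to replace the crude Cauchy--Schwarz estimate \eqref{app1} by a sharper one. Writing $I_q:=\int_{\SS2}P_\ell(\<x,y\>)^q\,dx$ for the Legendre moments, recall from \eqref{v-ell} that $1/v_{\ell;2}^4=O(\ell^2)$, and that \eqref{PassDel} lets one resum the combinatorial factors exactly as in the proof of Theorem \ref{Conv0} into $\E[|D\phi(Z)|^4]$, which is finite by Assumption \ref{ASSUMPTION}. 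Hence it suffices to prove the uniform estimate
\begin{equation*}
\Big|\int_{(\SS2)^4}\prod_{\underset{i<j}{i,j=1}}^4 P_\ell(\<x_i,x_j\>)^{k_{ij}}\,P_\ell(\<x_1,x_2\>)\,P_\ell(\<x_3,x_4\>)\,dx\Big|\le \frac{c}{\ell^{3}}
\end{equation*}
for an absolute constant $c$, over all $q_i\ge2$ and all $\kappa=\{k_{ij}\}\in\mathcal{C}_{q_1-1,\dots,q_4-1}$; this yields $\Var(\sigma_\ell)=O(\ell^2\cdot\ell^{-3})=O(\ell^{-1})$. The improvement over \eqref{app1}, which only gives $\ell^{-5/2}$ and hence $\Var=O(\ell^{-1/2})$, has to come from the \emph{signed} cancellations between products of Legendre polynomials that the absolute-value bound of Lemma \ref{STIMA} discards --- exactly the point where $d=2$ departs from $d\ge3$.

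The key device I would establish is a collapsing identity extending the reproducing property \eqref{G1}: for every integer $m\ge1$,
\begin{equation*}
\int_{\SS2}P_\ell(\<x_1,x_2\>)\,P_\ell(\<x_1,x_3\>)^{m}\,dx_1=I_{m+1}\,P_\ell(\<x_2,x_3\>),
\end{equation*}
which follows from the addition theorem \eqref{covT} together with the Funk--Hecke formula, since against the reproducing kernel $P_\ell(\<x_1,x_2\>)$ only the degree-$\ell$ projection of the zonal function $x_1\mapsto P_\ell(\<x_1,x_3\>)^m$ survives. Its meaning is that one may integrate out a vertex carrying a single power-one edge at the cost of exactly one moment factor $I_{m+1}$; by Proposition \ref{GegProp} and $|P_\ell|\le1$ one has $|I_{m+1}|\le I_2=\mu_2/n_{\ell;2}=O(\ell^{-1})$, with the strictly better $|I_q|=O(\ell^{-2}\log\ell)$ for every $q\ge3$. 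Together with the trivial leaf integration $\int_{\SS2}P_\ell(\<x,y\>)^m\,dx=I_m$, these two moves allow one to peel the graph $\mathfrak{G}_\kappa$ enriched with the two extra edges $(1,2)$ and $(3,4)$ one vertex at a time, each peel producing a factor of size $O(\ell^{-1})$ or smaller.

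I would then organize the estimate by the topology of this enriched four-vertex graph, following the split $N_\kappa\in\{1,2\}$ of the proof of Theorem \ref{Conv0} (cf.\ Figure \ref{N=2}). Whenever the enriched graph has a leaf, or a degree-two vertex with an incident power-one edge, I peel it off; iterating, either the graph collapses completely into a product of factors of size $O(\ell^{-1})$ ending in a two-vertex integral $\int_{(\SS2)^2}P_\ell(\<x,y\>)^2\,dxdy=\mu_2 I_2$ --- this is the borderline $O(\ell^{-3})$ case, realized on the all-power-one configurations such as the four-cycle with $q_1=\dots=q_4=2$, which involve $I_2$ alone and carry no logarithm --- or it reduces to a $2$-connected core (a triangle, the four-cycle, or $K_4$) in which every remaining edge has power $\ge2$. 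For such a core a direct Cauchy--Schwarz splitting into matchings bounds it, up to an explicit power of $\mu_2$, by a product $\prod_m I_{2m}^{1/2}$ with every exponent $2m\ge4$; combined with any factors already peeled off beforehand this is $o(\ell^{-3})$.

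The hard part will be the bookkeeping rather than any single estimate: carrying out this peeling exhaustively over the finitely many enriched topologies while keeping every constant independent of $q$ and $\kappa$, so that the combinatorial series genuinely resums, via \eqref{PassDel}, to $\E[|D\phi(Z)|^4]$. The one subtlety to watch is the exceptional moment $I_4\sim\frac{12\log\ell}{\pi\ell^2}$: I must check it never sits at the borderline $\ell^{-3}$. This is where the strict gain $|I_q|=O(\ell^{-2}\log\ell)=o(\ell^{-1})$ for $q\ge3$ is used --- any configuration in which some $I_q$ with $q\ge3$ appears during the peeling automatically acquires an extra power of $\ell^{-1}$ and is therefore $o(\ell^{-3})$, so the logarithm of $I_4$ can only show up strictly below the threshold and is harmless, while the borderline $\ell^{-3}$ contributions use $I_2$ alone and are logarithm-free. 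Inserting these per-integral bounds into \eqref{appoggio} gives $\Var(\sigma_\ell)=O(\ell^{-1})$, that is \eqref{eqVarDIM2}.
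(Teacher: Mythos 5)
Your strategy is, at bottom, the paper's own: reduce to the uniform bound $|\mathfrak{I}_{q_1,\ldots,q_4,\kappa}(\ell)|\le C\ell^{-3}$ over $\kappa\in\mathcal{C}_{q_1-1,\ldots,q_4-1}$ (this is Proposition \ref{prop-I}), prove it by exact Legendre-convolution identities plus Cauchy--Schwarz organized over the finitely many four-vertex topologies, and resum through \eqref{PassDel} into $\E[(D\phi(Z))^4]$. Your collapsing identity is correct and is exactly the content of Lemma \ref{GauntProd} (it is what powers Lemmas \ref{A_1}, \ref{B} and \ref{C}); your peeling procedure is a compact reorganization of the case analysis carried out in Lemmas \ref{lemmaI-1}--\ref{lemmaI-5}.

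There is, however, one concrete flaw in the dichotomy you propose. You assert that when the peeling stalls, the surviving $2$-connected core has every edge of power at least $2$, so that Cauchy--Schwarz yields only factors $I_{2m}^{1/2}$ with $2m\ge4$ and a contribution $o(\ell^{-3})$. That is correct when every core vertex has degree $2$ (triangle, four-cycle), because a power-one edge at a degree-two vertex triggers a collapse; but neither of your two moves can remove a vertex of degree $3$, so the core can be all of $K_4$ while still carrying power-one edges. The extremal instance is $k_{12}=k_{34}=0$ with $k_{13},k_{14},k_{23},k_{24}\ge1$ (realized, e.g., with $q_1=\cdots=q_4=3$): the enriched graph is $K_4$, every vertex has degree $3$, no peel is available, and the four cross edges may all have power $1$. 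This is precisely Case 3 of Lemma \ref{lemmaI-3}; the correct treatment is to split the six edges by Cauchy--Schwarz into two spanning trees, reduce each power to $1$ via $|P_\ell|\le1$ before squaring, and integrate the trees, which gives two factors of the form $\big(\mu_2 I_2^3\big)^{1/2}$ and hence exactly the borderline $O(\ell^{-3})$ --- not the $o(\ell^{-3})$ your mechanism predicts, and not covered by your ``all exponents at least $4$'' bound. The proposition survives, because $O(\ell^{-3})$ is all that is needed and only $I_2$ (no $I_4$, hence no logarithm) appears in this estimate, consistently with your observation about where logarithms can occur; but as written your argument misses one of the configurations that actually attains the threshold, and this case must be added explicitly.
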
 
	The improved estimate \eqref{eqVarDIM2} will be reached by using a completely different technique, based on appropriate estimates for the Legendre polynomials involving the analysis of Gaunt integrals. Once all the details will be tuned, the proof of Proposition \ref{propDIM2} will turn out to be an easy consequence (see Section \ref{B3}).

\subsection{Convolutions of Gaunts integrals}

\begin{definition}
For $d\geq 2$, $q\in \N$ and $n_1,\ldots, n_q \in \{0, \ldots, n_{\ell;d}\}$, the generalized Gaunt integral on $\SSd$ is:
	$$
	\G_{\ell n_1, \ldots, \ell n_q}=\int_{\SSd} \prod_{i=1}^q Y_{\ell n_i}(x) dx.
	$$
\end{definition} 
The following lemma generalizes Lemma 1.5 in \cite{Ros19}. 
\begin{lemma}\label{GauntProd}
For $q\in \N$ and $n, n_1, \ldots, n_q \in \{0,\ldots, n_{\ell;d}\}$ one has
	$$
	\sum_{m_1, \ldots, m_r}^{n_{\ell;d}} \G_{\ell m_1, \ldots, \ell m_r, \ell n}\, \G_{\ell m_1, \ldots, \ell m_r, \ell n_1, \ldots,\ell n_q}= \Big(\frac{n_{\ell;d}}{\mu_d} \Big)^{r-1}\hat \gamma_{\ell ;r} \,\,\G_{\ell n, \ell n_1,\ldots, \ell n_q}
	$$  
where
\begin{equation}\label{gammal}
\hat\gamma_{\ell; r}=n_{\ell;d}\frac{\mu_{d-1}}{\mu_d}\int_{-1}^1 G_{\ell;d}(t)^{r+1} \big(\sqrt{1-t^2}\big)^{d-2} dt.
\end{equation}
\end{lemma}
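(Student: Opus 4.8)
The plan is to write every generalized Gaunt integral as an integral over $\SSd$ of a product of hyperspherical harmonics, interchange the finite sum over $m_1,\ldots,m_r$ with the two integrations, and then collapse that sum by means of the addition theorem \eqref{covT}. Concretely, since $\G_{\ell m_1,\ldots,\ell m_r,\ell n}=\int_{\SSd}\prod_i Y_{\ell m_i}(x)\,Y_{\ell n}(x)\,dx$ and similarly for the second factor,
\[
\sum_{m_1,\ldots,m_r=1}^{n_{\ell;d}}\G_{\ell m_1,\ldots,\ell m_r,\ell n}\,\G_{\ell m_1,\ldots,\ell m_r,\ell n_1,\ldots,\ell n_q}
=\int_{\SSd}\!\!\int_{\SSd}Y_{\ell n}(x)\prod_{j=1}^q Y_{\ell n_j}(y)\prod_{i=1}^r\Big(\sum_{m_i=1}^{n_{\ell;d}}Y_{\ell m_i}(x)Y_{\ell m_i}(y)\Big)dx\,dy .
\]
Each inner sum equals $\tfrac{n_{\ell;d}}{\mu_d}G_{\ell;d}(\<x,y\>)$ by \eqref{covT}, so the whole $r$-fold product becomes $\big(\tfrac{n_{\ell;d}}{\mu_d}\big)^r G_{\ell;d}(\<x,y\>)^r$. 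This reduces the claim to evaluating the double integral of $G_{\ell;d}(\<x,y\>)^r\,Y_{\ell n}(x)\prod_j Y_{\ell n_j}(y)$.

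The heart of the argument — and the step I expect to be the main obstacle — is the inner $x$-integral $\int_{\SSd}G_{\ell;d}(\<x,y\>)^r\,Y_{\ell n}(x)\,dx$. For fixed $y$ the map $x\mapsto G_{\ell;d}(\<x,y\>)^r$ is invariant under the rotations of $SO(d+1)$ fixing $y$, hence so is its orthogonal projection onto the degree-$\ell$ eigenspace; since the only zonal elements of that eigenspace are the scalar multiples of the zonal harmonic $x\mapsto G_{\ell;d}(\<x,y\>)$ (this is exactly the Funk--Hecke phenomenon), the projection equals $\hat\gamma_{\ell;r}\,G_{\ell;d}(\<\cdot,y\>)$ for a scalar $\hat\gamma_{\ell;r}$. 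I would pin down the constant by pairing with $G_{\ell;d}(\<\cdot,y\>)$, which is itself a degree-$\ell$ element by \eqref{covT}: using $\int_{\SSd}G_{\ell;d}(\<x,y\>)^2\,dx=\mu_d/n_{\ell;d}$ from Proposition \ref{GegProp} on one side, and the standard one-dimensional reduction $\int_{\SSd}f(\<x,y\>)\,dx=\mu_{d-1}\int_{-1}^1 f(t)(1-t^2)^{(d-2)/2}\,dt$ applied to $f=G_{\ell;d}^{\,r+1}$ on the other, one finds exactly the value of $\hat\gamma_{\ell;r}$ displayed in \eqref{gammal}. Testing the projection against $Y_{\ell n}$, and using \eqref{covT} together with the orthonormality of the $Y_{\ell m}$, then yields
\[
\int_{\SSd}G_{\ell;d}(\<x,y\>)^r\,Y_{\ell n}(x)\,dx=\hat\gamma_{\ell;r}\,\frac{\mu_d}{n_{\ell;d}}\,Y_{\ell n}(y).
\]

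Finally I would substitute this back into the double integral. The factor $\tfrac{\mu_d}{n_{\ell;d}}$ cancels one power of $\tfrac{n_{\ell;d}}{\mu_d}$, leaving
\[
\Big(\frac{n_{\ell;d}}{\mu_d}\Big)^{r-1}\hat\gamma_{\ell;r}\int_{\SSd}Y_{\ell n}(y)\prod_{j=1}^q Y_{\ell n_j}(y)\,dy ,
\]
and the remaining integral is by definition $\G_{\ell n,\ell n_1,\ldots,\ell n_q}$, which is precisely the claimed identity. Throughout, interchanging the finite sum with the integrals is harmless, and the only genuinely new input beyond the addition theorem \eqref{covT} and the reproducing property \eqref{G1} is the zonal-projection identity isolated in the second paragraph, so the bulk of the work is the bookkeeping of the constants $\mu_d,\mu_{d-1},n_{\ell;d}$.
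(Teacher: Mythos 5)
Your proof is correct and follows essentially the same route as the paper's: both use the addition theorem \eqref{covT} to rewrite the sum as $\big(\tfrac{n_{\ell;d}}{\mu_d}\big)^r\int_{(\SSd)^2}G_{\ell;d}(\<x,y\>)^r\,Y_{\ell n}(x)\prod_{j}Y_{\ell n_j}(y)\,dx\,dy$ and then extract the degree-$\ell$ component of $G_{\ell;d}^{\,r}$, which is exactly where $\hat\gamma_{\ell;r}$ arises. The only difference is presentational: the paper expands $G_{\ell;d}^{\,r}$ in the Gegenbauer basis on $[-1,1]$ and kills the terms $j\neq\ell$ by orthogonality of harmonics of different degrees, whereas you phrase the same extraction as a zonal (Funk--Hecke) projection onto the $\ell$-th eigenspace and pin the constant down by pairing with $G_{\ell;d}(\<\cdot,y\>)$; the resulting coefficient $\gamma_{\ell,\ell;r}$ in the paper coincides with your $\hat\gamma_{\ell;r}$.
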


\begin{proof}
	From \eqref{covT} we have
	\begin{align*}
	&\sum_{m_1, \ldots, m_r} \G_{\ell m_1, \ldots, \ell m_r, \ell n}\, \G_{\ell m_1, \ldots, \ell m_r, \ell n_1, \ldots, n_q}=\\
	&=\sum_{ m_1, \ldots,  m_r} \int_{(\SSd)^2} \prod_{i=1}^r Y_{\ell m_i}(x)Y_{\ell m_i}(y)  Y_{\ell n}(x) \prod_{j=1}^q Y_{\ell n_j}(y) dxdy\\
	&=\Big(\frac{n_{\ell;d}}{\mu_d}\Big)^r \int_{(\SSd)^2} G_{\ell;d}(\<x,y\>)^r \,\,Y_{\ell n}(x) \prod_{j=1}^q Y_{\ell n_j}(y) dxdy.
	\end{align*}
	Since $\Big( \Big( \frac{\mu_{d-1} n_{\ell;d}}{\mu_d}\Big)^\frac{1}{2} G_{j;d}  \Big)_{j=0}^{r \ell}$ is an orthonormal system on $[-1,1]$ with the weight function $(1-t^2)^{d/2-1}$ (see \cite{Sze39}), we can write
	\begin{equation}\label{sviluppo}
	G_{\ell;d}(t)^r =\sum_{j=0}^{r \ell} \gamma_{j,\ell; r} G_{j;d}(t)
	\end{equation}
where, for $j=0,1,\ldots,r\ell$
\begin{equation}\label{gammalr}
\gamma_{j,\ell; r}=n_{j;d}\frac{\mu_{d-1}}{\mu_d}\int_{-1}^1 G_{\ell;d}(t)^r G_{j;d}(t)\big(\sqrt{1-t^2}\big)^{d-2} dt.
\end{equation}
	Substituting \eqref{sviluppo}, we have
	\begin{align*}
	&\sum_{m_1, \ldots, m_r} \G_{\ell m_1, \ldots, \ell m_r, \ell n}\, \G_{\ell m_1, \ldots, \ell m_r, \ell n_1, \ldots, n_q}\\
	&=\Big(\frac{n_{\ell;d}}{\mu_d}\Big)^r \int_{(\SSd)^2} \sum_{j=0}^{r\ell} \gamma_{j, \ell; r} G_{j;d}(\<x,y\>)\,\,Y_{\ell n}(x) 
	\prod_{i=1}^q Y_{\ell n_i}(y)dxdy\\
	&=\Big(\frac{n_{\ell;d}}{\mu_d}\Big)^{r-1}\sum_{j=0}^{r\ell}  \gamma_{j,\ell; r} \int_{(\SSd)^2} \sum_{h=0}^{n_{\ell;d}} Y_{j h}(x) Y_{j h}(y)\,\,Y_{\ell n}(x) 
\prod_{i=1}^q Y_{\ell n_i}(y)dxdy\\
	&=\Big(\frac{n_{\ell;d}}{\mu_d}\Big)^{r-1}\sum_{j=0}^{r\ell}  \gamma_{j,\ell; r}  \sum_{h=0}^{n_{\ell;d}} \underbrace {\int_{\SSd} Y_{j h}(x) Y_{\ell n}(x) dx }_{=\1_{j=\ell}\1_{h=n}}\int_{\SSd}Y_{j h}(y) 
	\prod_{i=1}^q Y_{\ell n_i}(y)dy\\
	&=\Big(\frac{n_{\ell;d}}{\mu_d}\Big)^{r-1}   \gamma_{\ell,\ell ; r}   \, \int_{\SSd}Y_{\ell n}(y) 
	\prod_{i=1}^q Y_{\ell n_i}(y)dy\\
	&=\Big(\frac{n_{\ell;d}}{\mu_d}\Big)^{r-1}   \gamma_{\ell,\ell ; r}  \,\G_{\ell n, \ell n_1, \ldots, \ell n_q}.
	\end{align*}
Since $\hat{\gamma}_{\ell;r}= \gamma_{\ell,\ell ; r} $, the statement follows.
\end{proof}

Now we take advantage of Lemma \ref{GauntProd} to prove the following results. 
\begin{lemma}\label{A_1}
	For $p, q \geq 2$ there exists a positive constant $c_d$ such that
	\begin{align*}
	\int_{(\SSd)^3} G_{\ell;d}(\<x_1,x_2\>) &G_{\ell;d}(\<x_1,x_4\>)^p G_{\ell;d}(\<x_2,x_4\>)^ q dx_1dx_2dx_4 \\&= c_d \int_{(\SSd)^2}G_{\ell;d} (\<x,y\>)^{q+1} dxdy \int_{(\SSd)^2 }G_{\ell;d} (\<x,y\>)^{p+1} dxdy
	\end{align*}
\end{lemma}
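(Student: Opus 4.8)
The plan is to reduce the left-hand side to generalized Gaunt integrals and then apply Lemma \ref{GauntProd}. First I would expand each power of the Gegenbauer polynomial by the addition formula \eqref{covT}, $G_{\ell;d}(\<x,y\>)=\frac{\mu_d}{n_{\ell;d}}\sum_{m=1}^{n_{\ell;d}}Y_{\ell m}(x)Y_{\ell m}(y)$, using a single summation index $a$ for $G_{\ell;d}(\<x_1,x_2\>)$, indices $b_1,\dots,b_p$ for $G_{\ell;d}(\<x_1,x_4\>)^p$ and indices $c_1,\dots,c_q$ for $G_{\ell;d}(\<x_2,x_4\>)^q$. Integrating separately in $x_1$, $x_2$ and $x_4$ produces the three Gaunt integrals $\G_{\ell a,\ell b_1,\dots,\ell b_p}$, $\G_{\ell a,\ell c_1,\dots,\ell c_q}$ and $\G_{\ell b_1,\dots,\ell b_p,\ell c_1,\dots,\ell c_q}$, so that the left-hand side equals
\begin{equation*}
\Big(\frac{\mu_d}{n_{\ell;d}}\Big)^{1+p+q}\sum_{a}\sum_{b_1,\dots,b_p}\sum_{c_1,\dots,c_q}\G_{\ell a,\ell b_1,\dots,\ell b_p}\,\G_{\ell a,\ell c_1,\dots,\ell c_q}\,\G_{\ell b_1,\dots,\ell b_p,\ell c_1,\dots,\ell c_q}.
\end{equation*}

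The key step is to contract the indices $b_1,\dots,b_p$, which are shared only by the first and third Gaunt integrals. Factoring out the second factor (which is $b$-free) and applying Lemma \ref{GauntProd} with $r=p$, $n=a$ and $(n_1,\dots,n_q)=(c_1,\dots,c_q)$ gives
\begin{equation*}
\sum_{b_1,\dots,b_p}\G_{\ell b_1,\dots,\ell b_p,\ell a}\,\G_{\ell b_1,\dots,\ell b_p,\ell c_1,\dots,\ell c_q}=\Big(\frac{n_{\ell;d}}{\mu_d}\Big)^{p-1}\hat\gamma_{\ell;p}\,\G_{\ell a,\ell c_1,\dots,\ell c_q}.
\end{equation*}
The remaining sum then collapses to $\big(\tfrac{n_{\ell;d}}{\mu_d}\big)^{p-1}\hat\gamma_{\ell;p}\sum_{a,c_1,\dots,c_q}\G_{\ell a,\ell c_1,\dots,\ell c_q}^{\,2}$, and I would evaluate this last sum directly: writing each Gaunt factor as an integral over $\SSd$, squaring yields a double integral over $(\SSd)^2$, and summing each index $a,c_1,\dots,c_q$ again by the addition formula turns $\sum_a Y_{\ell a}(x)Y_{\ell a}(y)$ (and likewise each $c_j$-sum) into $\frac{n_{\ell;d}}{\mu_d}G_{\ell;d}(\<x,y\>)$. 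This gives $\big(\frac{n_{\ell;d}}{\mu_d}\big)^{1+q}\int_{(\SSd)^2}G_{\ell;d}(\<x,y\>)^{q+1}\,dxdy$.

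Finally I would identify $\hat\gamma_{\ell;p}$ with the remaining two-point integral. Using the constancy of $y\mapsto\int_{\SSd}G_{\ell;d}(\<x,y\>)^{p+1}dx$ from Proposition \ref{GegProp} together with the polar-coordinate identity $\int_{\SSd}f(\<x,y\>)\,dx=\mu_{d-1}\int_{-1}^1 f(t)(\sqrt{1-t^2})^{d-2}\,dt$, the definition \eqref{gammal} yields $\hat\gamma_{\ell;p}=\frac{n_{\ell;d}}{\mu_d^2}\int_{(\SSd)^2}G_{\ell;d}(\<x,y\>)^{p+1}\,dxdy$. Collecting the three surviving powers of $\mu_d/n_{\ell;d}$, whose exponents sum to $(1+p+q)-(p-1)-(1+q)=1$, all factors of $n_{\ell;d}$ cancel and the left-hand side reduces to $\frac{1}{\mu_d}\int_{(\SSd)^2}G_{\ell;d}(\<x,y\>)^{p+1}dxdy\int_{(\SSd)^2}G_{\ell;d}(\<x,y\>)^{q+1}dxdy$, which is exactly the claim with $c_d=1/\mu_d$. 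The main obstacle is purely the bookkeeping: one must correctly match the index structure so that Lemma \ref{GauntProd} is invoked with the right value of $r$ (here $r=p$, the indices shared between the $x_1$- and $x_4$-integrations), and then track the exact powers of $n_{\ell;d}/\mu_d$. Notably no asymptotics are needed, since the identity is exact for every even $\ell$.
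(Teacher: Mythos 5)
Your proposal is correct and follows essentially the same route as the paper's proof: expand each Gegenbauer factor via the addition theorem to obtain a product of three Gaunt integrals with prefactor $(\mu_d/n_{\ell;d})^{p+q+1}$, contract the $p$ shared indices via Lemma \ref{GauntProd} (with $r=p$), and reduce the remaining sum of squared Gaunt integrals back to $\int_{(\SSd)^2}G_{\ell;d}(\<x,y\>)^{q+1}dxdy$. The only difference is that you carry the bookkeeping one step further and identify the constant explicitly as $c_d=1/\mu_d$, which is consistent with the paper's (unevaluated) $c_d$.
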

\begin{proof}
	From Lemma \ref{GauntProd} we have
	\begin{align*}
	&\int_{(\SSd)^3} G_{\ell;d}(\<x_1,x_2\>) G_{\ell;d}(\<x_1,x_4\>)^p G_{\ell;d}(\<x_2,x_4\>)^ q dx_1dx_2dx_4\\
	&=\Big(\frac{\mu_d}{n_{\ell;d}}\Big)^{p+q+1} \sum_{r, n_1,\ldots, n_p} \sum_{m_1,\ldots, m_q} \G_{\ell r , \ell n_1, \ldots, \ell n_p} \G_{\ell r, \ell m_1,\ldots, \ell m_q} \G_{\ell n_1,\ldots, n_p,\ell m_1, \ldots, \ell m_q}\\
	&=\Big(\frac{\mu_d}{n_{\ell;d}}\Big)^{q+2} \hat \gamma_{\ell; p}   \sum_{r, m_1,\ldots, m_q} \G_{\ell r , \ell m_1, \ldots, \ell m_q}^2 \\
	&=c_d \int_{(\SSd)^2}G_{\ell;d} (\<x,y\>)^{q+1} dxdy \int_{(\SSd)^2 }G_{\ell;d} (\<x,y\>)^{p+1} dxdy.
	\end{align*}
\end{proof}
\begin{lemma}\label{B}
There exists a positive constant $ c_d>0$ such that for every $q_1, q_2\geq 2$, $q_3\geq 1$ 
	\begin{align*}
	&\int_{(\SSd)^4} G_{\ell;d}(\<x_1, x_2\>)G_{\ell;d}(\<x_1, x_4\>)^{q_1} G_{\ell;d}(\<x_2, x_3\>)^{q_2}G_{\ell;d}(\<x_3, x_4\>)^{q_3}dx\\
	&=c_d \int_{(\SSd)^2} G_{\ell; d}(\<x,y\>)^{q_1+1}dxdy\int_{(\SSd)^2} G_{\ell; d}(\<x,y\>)^{q_2+1}dxdy\int_{(\SSd)^2} G_{\ell; d}(\<x,y\>)^{q_3+1}dxdy
	\end{align*} 
\end{lemma}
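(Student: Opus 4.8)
The plan is to follow the template of Lemma \ref{A_1}: expand every power of the Gegenbauer polynomial into hyperspherical harmonics via the addition theorem \eqref{covT}, integrate out each of the four variables to produce generalized Gaunt integrals, and then collapse the resulting sums with Lemma \ref{GauntProd}.

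First I would write, using \eqref{covT}, $G_{\ell;d}(\<x_i,x_j\>)^q = (\mu_d/n_{\ell;d})^q \sum_{m_1,\ldots,m_q}\prod_{s=1}^q Y_{\ell m_s}(x_i)Y_{\ell m_s}(x_j)$ for each of the four factors, keeping a single index $r$ for the power-one factor $G_{\ell;d}(\<x_1,x_2\>)$ and index blocks $a=(a_1,\ldots,a_{q_1})$, $b=(b_1,\ldots,b_{q_2})$, $c=(c_1,\ldots,c_{q_3})$ for the other three. Integrating separately in $x_1,x_2,x_3,x_4$ and reading off the single-variable integrals of products of harmonics as Gaunt integrals, I obtain
$$
\int_{(\SSd)^4}\cdots\,dx=\Big(\frac{\mu_d}{n_{\ell;d}}\Big)^{1+q_1+q_2+q_3}\sum_{r,a,b,c}\G_{\ell r,\ell a}\,\G_{\ell r,\ell b}\,\G_{\ell b,\ell c}\,\G_{\ell a,\ell c},
$$
a cyclic product of four Gaunt integrals mirroring the $4$-cycle $1$--$2$--$3$--$4$--$1$ (here $x_1$ carries $r$ and $a$, $x_2$ carries $r$ and $b$, $x_3$ carries $b$ and $c$, and $x_4$ carries $a$ and $c$).

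Next I would contract the two index blocks that are each shared between a pair of nodes. The block $a$ is common to $\G_{\ell r,\ell a}$ and $\G_{\ell a,\ell c}$, so Lemma \ref{GauntProd} gives $\sum_a\G_{\ell a,\ell r}\,\G_{\ell a,\ell c}=(n_{\ell;d}/\mu_d)^{q_1-1}\hat\gamma_{\ell;q_1}\,\G_{\ell r,\ell c}$; symmetrically the block $b$ yields $\sum_b\G_{\ell b,\ell r}\,\G_{\ell b,\ell c}=(n_{\ell;d}/\mu_d)^{q_2-1}\hat\gamma_{\ell;q_2}\,\G_{\ell r,\ell c}$. What survives is $\sum_{r,c}\G_{\ell r,\ell c}^2$, which by \eqref{covT} equals $(n_{\ell;d}/\mu_d)^{q_3+1}\int_{(\SSd)^2}G_{\ell;d}(\<x,y\>)^{q_3+1}\,dxdy$, exactly the computation that identifies $\sum\G^2$ with a zonal integral in the proof of Lemma \ref{A_1}.

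Finally I would rewrite the two constants $\hat\gamma_{\ell;q_1},\hat\gamma_{\ell;q_2}$ as double integrals: comparing \eqref{gammal} with the zonal reduction $\int_{(\SSd)^2}f(\<x,y\>)\,dxdy=\mu_d\mu_{d-1}\int_{-1}^1 f(t)(1-t^2)^{(d-2)/2}\,dt$ gives $\hat\gamma_{\ell;q_i}=(n_{\ell;d}/\mu_d^2)\int_{(\SSd)^2}G_{\ell;d}(\<x,y\>)^{q_i+1}\,dxdy$. Substituting this and collecting the powers of $n_{\ell;d}/\mu_d$ coming from the prefactor, the two contractions, the two factors $\hat\gamma$, and the identity for $\sum\G^2$, all such powers cancel, leaving the product of the three double integrals times the numerical constant $c_d=1/\mu_d^2$. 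The only real difficulty is the combinatorial bookkeeping: keeping track of which block is shared between which pair of nodes in the cyclic configuration, invoking Lemma \ref{GauntProd} with the correct number of contracted indices, and verifying the exact cancellation of the $n_{\ell;d}$-powers. The hypotheses $q_1,q_2\ge2$ and $q_3\ge1$ simply ensure that all four edges are genuinely present, so that the two contractions and the final reduction are nondegenerate.
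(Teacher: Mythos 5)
Your proposal is correct and follows essentially the same route as the paper: expand each power of $G_{\ell;d}$ via the addition theorem into a cyclic product of four generalized Gaunt integrals, contract the two blocks of size $q_1$ and $q_2$ with Lemma \ref{GauntProd} to produce the factors $\hat\gamma_{\ell;q_1}\hat\gamma_{\ell;q_2}$, and identify the surviving $\sum_{r,c}\G_{\ell r,\ell c}^2$ with $(n_{\ell;d}/\mu_d)^{q_3+1}\int_{(\SSd)^2}G_{\ell;d}(\<x,y\>)^{q_3+1}dxdy$. Your bookkeeping of the powers of $n_{\ell;d}/\mu_d$ and the explicit value $c_d=1/\mu_d^2$ are consistent with the paper's computation (which leaves the constant implicit).
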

\begin{proof}
	From Lemma \ref{GauntProd} we have
	\begin{align*}
	&\int_{(\SSd)^4} G_{\ell;d}(\<x_1, x_2\>)G_{\ell;d}(\<x_1, x_4\>)^{q_1} G_{\ell;d}(\<x_2, x_3\>)^{q_2}G_{\ell;d}(\<x_3, x_4\>)^{q_3}dx\\
	&=\Big(\frac{\mu_d}{n_{\ell;d}}\Big)^{q_1+q_2+q_3+1}\sum_{\underset{r, n_1,\ldots n_{q_1}}{m_1,\ldots, m_{q_2},t_1,\ldots, t_{q_3}}} \G_{\ell r, \ell n_1, \ldots, \ell n_{q_1}}\G_{\ell r, \ell m_1, \ldots, \ell m_{q_2}}\G_{ \ell n_1, \ldots, \ell n_{q_1}, \ell t_1,\ldots, \ell t_{q_3}} \G_{ \ell m_1, \ldots, \ell m_{q_2},\ell t_1, \ldots, \ell t_{q_3}}\\
	&=\Big(\frac{\mu_d}{n_{\ell;d}}\Big)^{q_2+q_3+2}\hat \gamma_{\ell;q_1} \sum_{r,m_1,\ldots, m_{q_2},t_1,\ldots, t_{q_3}}\G_{\ell r, \ell m_1, \ldots, \ell m_{q_2}}\G_{ \ell m_1, \ldots, \ell m_{q_2},\ell t_1, \ldots, \ell t_{q_3}}\G_{\ell r, \ell t_1,\ldots, \ell t_{q_3}}\\
	&=\Big(\frac{\mu_d}{n_{\ell;d}}\Big)^{q_3+3} \hat\gamma_{\ell; q_1} \hat \gamma_{\ell; q_2} \sum_{r,t_1,\ldots, t_{q_3}}\G_{\ell r, \ell t_1,\ldots, \ell t_{q_3}}^2\\
	&=c_d \int_{(\SSd)^2} G_{\ell; d}(\<x,y\>)^{q_1+1}dxdy\int_{(\SSd)^2} G_{\ell; d}(\<x,y\>)^{q_2+1}dxdy\int_{(\SSd)^2} G_{\ell; d}(\<x,y\>)^{q_3+1}dxdy.
	\end{align*}
\end{proof}
\begin{lemma}\label{C}
There exists a positive constant $c_d$ such that for $p_1, p_2\geq 2$ and $p_3\geq 0$, we have
	\begin{align*}
	&\int_{(\SSd)^4} G_{\ell;d}(\<x_1,x_2\>)G_{\ell;d}(\<x_1,x_4\>)^{p_1} G_{\ell;d}(\<x_2, x_3\>)^{p_2} G_{\ell;d}(\<x_2,x_4\>)^{p_3}
	G_{\ell;d}(\<x_3,x_4\>)dx=\\
	&= c_d \int_{(\SSd)^2} G_{\ell;d}(\<x,y\>)^{p_1 +1} dxdy.  \int_{(\SSd)^2} G_{\ell;d}(\<x,y\>)^{p_2+1} dxdy.  \int_{(\SSd)^2} G_{\ell;d}(\<x,y\>)^{p_3+2} dxdy. 
	\end{align*}

%
%
\end{lemma}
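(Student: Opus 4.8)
The plan is to follow the same strategy as in the proofs of Lemma \ref{A_1} and Lemma \ref{B}: expand every power of the Gegenbauer polynomial by the addition formula \eqref{covT}, integrate out each of the four points separately to produce generalized Gaunt integrals, and then repeatedly invoke the convolution identity of Lemma \ref{GauntProd} to collapse the summations. Concretely, I would introduce the index $r$ for the edge $(x_1,x_2)$, the tuple $(n_1,\ldots,n_{p_1})$ for $(x_1,x_4)$, the tuple $(m_1,\ldots,m_{p_2})$ for $(x_2,x_3)$, the tuple $(s_1,\ldots,s_{p_3})$ for $(x_2,x_4)$, and $t$ for $(x_3,x_4)$. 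Integrating out $x_1,x_2,x_3,x_4$ one at a time merges the harmonics sharing each point into a single Gaunt integral, giving
\begin{align*}
&\Big(\frac{\mu_d}{n_{\ell;d}}\Big)^{p_1+p_2+p_3+2}\sum_{\substack{r,\,n_1,\ldots,n_{p_1}\\ m_1,\ldots,m_{p_2},\,s_1,\ldots,s_{p_3},\,t}} \G_{\ell r,\ell n_1,\ldots,\ell n_{p_1}}\,\G_{\ell r,\ell m_1,\ldots,\ell m_{p_2},\ell s_1,\ldots,\ell s_{p_3}}\\
&\qquad\qquad\times\G_{\ell m_1,\ldots,\ell m_{p_2},\ell t}\,\G_{\ell n_1,\ldots,\ell n_{p_1},\ell s_1,\ldots,\ell s_{p_3},\ell t}.
\end{align*}

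Next I would peel off the groups of summation indices one at a time. The tuple $(n_1,\ldots,n_{p_1})$ appears in exactly two Gaunt factors, the one coming from $x_1$ carrying precisely the single extra index $r$; hence Lemma \ref{GauntProd} applies and contributes a factor $(n_{\ell;d}/\mu_d)^{p_1-1}\hat\gamma_{\ell;p_1}$ while merging those two factors into $\G_{\ell r,\ell s_1,\ldots,\ell s_{p_3},\ell t}$. The same is true for $(m_1,\ldots,m_{p_2})$, whose partner factor from $x_3$ carries the single extra index $t$, yielding $(n_{\ell;d}/\mu_d)^{p_2-1}\hat\gamma_{\ell;p_2}$. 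After these two reductions the surviving summation is exactly $\sum_{r,s_1,\ldots,s_{p_3},t}\G_{\ell r,\ell s_1,\ldots,\ell s_{p_3},\ell t}^{\,2}$, a sum of squares of Gaunt integrals over $p_3+2$ indices; by the addition formula \eqref{covT} such a sum equals $(n_{\ell;d}/\mu_d)^{p_3+2}\int_{(\SSd)^2}G_{\ell;d}(\<x,y\>)^{p_3+2}\,dx\,dy$.

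Collecting the exponents of $n_{\ell;d}/\mu_d$ (namely $-(p_1+p_2+p_3+2)+(p_1-1)+(p_2-1)+(p_3+2)=-2$) leaves the integral equal to $(\mu_d/n_{\ell;d})^2\,\hat\gamma_{\ell;p_1}\hat\gamma_{\ell;p_2}\int_{(\SSd)^2}G_{\ell;d}^{\,p_3+2}$. To finish I would rewrite the two coefficients $\hat\gamma_{\ell;r}$ via their definition \eqref{gammal} together with the reduction of a zonal integral to one dimension, $\int_{(\SSd)^2}G_{\ell;d}(\<x,y\>)^{r+1}dxdy=\mu_d\mu_{d-1}\int_{-1}^1 G_{\ell;d}(t)^{r+1}(1-t^2)^{(d-2)/2}dt$; comparison gives $\hat\gamma_{\ell;r}=\frac{n_{\ell;d}}{\mu_d^2}\int_{(\SSd)^2}G_{\ell;d}(\<x,y\>)^{r+1}dxdy$, so that the powers of $n_{\ell;d}$ cancel and the claim holds with $c_d=1/\mu_d^2$.

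The only genuinely delicate point is the bookkeeping: before each use of Lemma \ref{GauntProd} one must verify that the group of indices to be summed occurs in precisely two Gaunt factors and that one of them contains exactly one additional index, so the lemma applies in its stated form. Here this is guaranteed by the two degree-two vertices $x_1$ and $x_3$, each incident to a single power-one edge, which provide the required \emph{single-extra-index} partners for the tuples $(n_i)$ and $(m_i)$; once this is observed, everything else is routine power counting.
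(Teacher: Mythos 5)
Your argument is correct and follows essentially the same route as the paper's own proof of Lemma \ref{C}: expand via the addition theorem into products of generalized Gaunt integrals, apply Lemma \ref{GauntProd} twice to the two index tuples whose partner factors carry a single extra index (coming from the vertices $x_1$ and $x_3$), and recognize the surviving sum of squared Gaunt integrals as $(n_{\ell;d}/\mu_d)^{p_3+2}\int_{(\SSd)^2}G_{\ell;d}(\<x,y\>)^{p_3+2}\,dx\,dy$. The only differences are notational, plus the fact that you make explicit the identity $\hat\gamma_{\ell;r}=\frac{n_{\ell;d}}{\mu_d^2}\int_{(\SSd)^2}G_{\ell;d}(\<x,y\>)^{r+1}\,dx\,dy$ and hence the value $c_d=1/\mu_d^2$, which the paper leaves implicit.
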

\begin{proof}
	We have that
	\begin{align*}
	& \int_{(\SSd)^4} G_{\ell;d}(\<x_1,x_2\>)G_{\ell;d}(\<x_3,x_4\>)G_{\ell;d}(\<x_1,x_4\>)^{p_1} G_{\ell;d}(\<x_2,x_3\>)^{p_2} G_{\ell;d}(\<x_2,x_4\>)^{p_3}dx\\
	&=\Big(\frac{\mu_d}{n_{\ell;d}}\Big)^{p_1+p_2+p_3+2}\sum_{r_1,r_2}\sum_{m_1,\ldots, m_{p_1}}\sum_{n_1,\ldots, n_{p_2}} \times \\
	&\times \sum_{s_1,\ldots, s_{p_3}}\G_{\ell r_1, \ell m_1,\ldots, \ell m_{p_1}} \G_{\ell r_1, \ell n_1, \ldots, \ell n_{p_2}, \ell s_1, \ldots, \ell s_{p_3}}\G_{\ell r_2, \ell n_1, \ldots, \ell n_{p_2}}\G_{\ell r_2,  \ell m_1 ,\ldots, \ell m_{p_1},\ell s_1, \ldots, \ell s_{p_3}}.
	\end{align*}
	From Lemma \ref{GauntProd} we have
	\begin{align*}
	& \sum_{m_1,\ldots, m_{p_1}}  \G_{\ell r_1, \ell m_1, \ldots, \ell m_{p_1}}\G_{\ell r_2,  \ell m_1 ,\ldots, \ell m_{p_1},\ell s_1, \ldots, \ell s_{p_3}}=\Big(\frac{n_{\ell;d}}{\mu_d}\Big)^{p_1-1} \gamma_{\ell; p_1 } \G_{\ell r_1,\ell r_2, \ell s_1,\ldots, \ell s_{p_3}}
	\end{align*}
	and 
	\begin{align*}
	\sum_{n_1,\ldots,  n_{p_2}}  \G_{\ell r_1, \ell n_1, \ldots, \ell n_{p_2}, \ell s_1, \ldots, \ell s_{p_3}}\G_{\ell r_2, \ell n_1, \ldots, \ell n_{p_2}}  =\Big(\frac{n_{\ell;d}}{\mu_d} \Big)^{p_2-1}\hat\gamma_{\ell;p_2}\,   \G_{\ell r_1,\ell r_2, \ell s_1,\ldots, \ell s_{p_3}}.
	\end{align*}
	Then, by inserting,
	\begin{align*}
	& \int_{(\SSd)^4} G_{\ell;d}(\<x_1,x_2\>)G_{\ell;d}(\<x_3,x_4\>)G_{\ell;d}(\<x_1,x_4\>)^{p_1} G_{\ell;d}(\<x_2,x_3\>)^{p_2} G_{\ell;d}(\<x_2,x_4\>)^{p_3}dx\\
	&=\Big(\frac{\mu_d}{n_{\ell;d}}\Big)^{p_3+4}\hat\gamma_{\ell; p_1}\hat  \gamma_{\ell;p_2}\,\sum_{r_1, r_2}\sum_{s_1,\ldots, s_{p_3}}\G_{\ell r_1, \ell r_2, \ell s_1,\ldots, \ell s_{p_3}}^2\\
	&=\Big(\frac{\mu_d}{n_{\ell;d}}\Big)^2 \hat\gamma_{\ell; p_1}  \hat\gamma_{\ell;p_2} \int_{(\SSd)^2} G_{\ell;d}(\<x,y\>)^{p_3+2} dxdy\\
	&=c_d \int_{(\SSd)^2} G_{\ell;d}(\<x,y\>)^{p_1 +1} dxdy.  \int_{(\SSd)^2} G_{\ell;d}(\<x,y\>)^{p_2+1} dxdy.  \int_{(\SSd)^2} G_{\ell;d}(\<x,y\>)^{p_3+2} dxdy.
	\end{align*}
	
\end{proof}

\subsection{Integrals of Legendre polynomials} 

We consider now the case $d=2$, in which the Gegenbauer polynomials coincide with the Legendre polynomials: $G_{\ell; 2}\equiv P_\ell$, $\ell\geq 0$.

For $q_1,\ldots, q_4 \in \N$ and $\kappa=\{k_{ij}\}_{i,j=1}^ 4\in \mathcal{A}_{q_1-1, \ldots, q_4-1}$ (see Definition \ref{defA}), we define
\begin{equation}\label{I}
\mathfrak{I}_{q_1,\ldots, q_4, \kappa}(\ell)=\int_{(\SS2)^4} P_{\ell}(\<x_1,x_2\>)^{k_{12}+1}\prod_{i<j, i<3}P_{\ell}(\<x_i,x_j\>)^{k_{ij}} \, P_{\ell}(\<x_3,x_4\>)^{k_{34}+1}dx.
\end{equation}
We recall the set  $\mathcal{C}_{q_1-1,\ldots,q_4-1}$ introduced in the proof of Proposition \ref{Conv0}: $\mathcal{C}_{q_1-1,\ldots,q_4-1}=\mathcal{A}_{q_1-1,\ldots,q_4-1}\setminus \mathcal{N}_{q_1-1,\ldots,q_4-1}$, where $\kappa= \{k_{ij}\}_{i,j=1}^ 4\mathcal{N}_{q_1-1,\ldots,q_4-1}$  if and only if
$k_{12}=q_1-1=q_2-1$, $k_{34}=q_3-1=q_4-1$ and $k_{ij}=0$ if $(i,j)\neq (1,2), (3,4)$. We recall that the variance of the Malliavin covariance has a series expansion in which the terms associated with $\mathcal{N}_{q_1-1,\ldots,q_4-1}$ are null, following a simplification between the expansion of the mean of the square and the square of the mean.

The goal of the present section is to prove the following result:
\begin{proposition} \label{prop-I}
There exists $C>0$ such that for every $q_1,\ldots, q_4 \geq 2$ and $\kappa=\{k_{ij}\}_{i,j=1}^ 4\in \mathcal{C}_{q_1-1, \ldots, q_4-1}$ one has
\begin{equation}\label{estd2}
|\mathfrak{I}_{q_1,\ldots, q_4, \kappa}(\ell)|\leq \frac{C}{\ell^3}.
\end{equation}
\end{proposition}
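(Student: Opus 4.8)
The plan is to compute $\mathfrak{I}_{q_1,\ldots, q_4, \kappa}(\ell)$ \emph{exactly} as a product of two-point integrals and then read off the decay from the $d=2$ asymptotics of Proposition \ref{GegProp}. Set $A_m(\ell):=\int_{(\SS2)^2}P_\ell(\<x,y\>)^m\,dxdy$; by \eqref{asGeg2} and \eqref{asLegq} one has $A_2(\ell)=O(\ell^{-1})$, $A_4(\ell)=O(\ell^{-2}\log\ell)$ and $A_m(\ell)=O(\ell^{-2})$ for $m=3$ or $m\geq 5$, while $A_1(\ell)=0$ since $\int_{\SS2}P_\ell(\<x,y\>)\,dx=0$ for $\ell\geq 1$. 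The integrand in \eqref{I} is a product of Legendre factors indexed by the edges of the extrapolated graph $\mathfrak{G}_\kappa$, \emph{augmented} by the two extra factors $P_\ell(\<x_1,x_2\>)$ and $P_\ell(\<x_3,x_4\>)$ coming from the $+1$'s; call the resulting weighted graph $\widetilde{\mathfrak{G}}$. Since $\kappa\in\mathcal{C}_{q_1-1,\ldots,q_4-1}$ excludes exactly the configuration $\mathcal{N}_{q_1-1,\ldots,q_4-1}$ in which $\{1,2\}$ and $\{3,4\}$ are disconnected, at least one cross edge $(i,j)$ with $i\in\{1,2\}$, $j\in\{3,4\}$ is present, so $\widetilde{\mathfrak{G}}$ is connected on its four vertices.

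First I would reduce to a finite case analysis according to the topology of $\widetilde{\mathfrak{G}}$, which, up to the relabellings that leave $\mathfrak{I}$ invariant, is determined by $N_\kappa\in\{1,2\}$ and by which cross edges occur. The binding (sparsest) cases are the ones where $\widetilde{\mathfrak{G}}$ is a tree or a four-cycle. When $\widetilde{\mathfrak{G}}$ is a path $2\!-\!1\!-\!3\!-\!4$ (one cross edge) I would integrate out the two degree-one vertices, using that $\int_{\SS2}P_\ell(\<x,y\>)^m\,dy=A_m(\ell)/\mu_2$ is constant in $x$, obtaining $\mathfrak{I}=\mu_2^{-2}A_{k_{12}+1}(\ell)\,A_{k_{34}+1}(\ell)\,A_{k_{13}}(\ell)$; here $k_{12}+1,k_{34}+1\geq 2$, so the worst case is $A_2^3=O(\ell^{-3})$, attained e.g. for $k_{12}=k_{34}=1,k_{13}=2$, which is what makes \eqref{estd2} sharp. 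When $N_\kappa=2$, i.e. $\widetilde{\mathfrak{G}}$ is the four-cycle coming from the graphs (b), (c) of Figure \ref{N=2}, and both cross exponents are $\geq 2$, Lemma \ref{B} applies and yields a product of three $A_m(\ell)$ with at least two indices $\geq 3$, hence $O(\ell^{-5})$; configurations with three or four cross edges are denser and handled analogously by Lemma \ref{C} and by iterating the master convolution identity Lemma \ref{GauntProd}, always producing strictly more decay than $\ell^{-3}$. A configuration with a degree-one vertex but a triangle remaining is reduced by integrating that vertex out and then invoking Lemma \ref{A_1}.

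Once $\mathfrak{I}$ is written as $c\,A_{m_1}(\ell)A_{m_2}(\ell)A_{m_3}(\ell)$ (or as a product of $\mu_d/n_{\ell;d}\sim\ell^{-1}$ contraction factors times an $A_m$), the bound $O(\ell^{-3})$ is immediate: each genuine factor is $O(\ell^{-1})$, a factor $A_1=0$ kills degenerate terms, and an $A_4$ is always accompanied by two further factors contributing at least $\ell^{-2}$, so its logarithm is harmless ($O(\ell^{-4}\log\ell)=o(\ell^{-3})$). The main obstacle is the bookkeeping of the boundary cases in which some $q_i=2$, i.e. a cross edge carries exponent $1$: there the hypotheses $p,q\geq 2$ of Lemmas \ref{A_1}--\ref{C} fail, and instead I would contract the vertex incident to two exponent-one edges by the reproducing property \eqref{G1}, each contraction producing a factor $\mu_d/n_{\ell;d}=O(\ell^{-1})$. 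As in the treatment of case (b) in the proof of Theorem \ref{Conv0}, two such contractions leave $(\mu_d/n_{\ell;d})^2\,A_{q}(\ell)=O(\ell^{-2})\cdot O(\ell^{-1})=O(\ell^{-3})$, so these degenerate situations also respect \eqref{estd2}. Checking that in every one of these cases either a vanishing factor appears or a total of three units of decay survives is the only delicate point; everything else is routine.
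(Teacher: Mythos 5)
Your plan is essentially the route the paper takes: a case analysis over the topology of the (augmented) extrapolated graph, exact product formulas from the Gaunt-convolution lemmas (Lemmas \ref{A_1}, \ref{B}, \ref{C}) for the sparse structured configurations, contraction of exponent-one edges via the reproducing property \eqref{G1} when some $q_i=2$, and the $d=2$ moment asymptotics of Proposition \ref{GegProp} to read off the decay. You also correctly locate where the bound $\ell^{-3}$ is saturated (the path/two-cross-edge configurations with all surviving exponents equal to $2$, and the doubly contracted case $q_1=q_2=2$), and you correctly use $A_1(\ell)=0$ to kill the degenerate terms. The paper organizes the same analysis by the number $R_{q_1,\ldots,q_4,\kappa}$ of nonzero $k_{ij}$'s (Lemmas \ref{lemmaI-1}--\ref{lemmaI-5}) rather than by the shape of the augmented graph, but the two bookkeepings are interchangeable.

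The one step that would fail as literally stated is your treatment of the dense configurations ``by iterating the master convolution identity Lemma \ref{GauntProd}.'' That identity collapses a shared block of indices only when one of the two Gaunt factors carries exactly \emph{one} index outside the shared block; graphically, you can only peel off a vertex whose incident edges, apart from the one being contracted, have total multiplicity one. This is precisely why Lemmas \ref{A_1}--\ref{C} impose their rigid exponent patterns (certain edges forced to carry exponent $1$, others forced to be $\geq 2$). For the complete graph on four vertices (all four cross edges present), and for the five-edge configurations in which both $k_{12}+1$ and $k_{34}+1$ exceed $1$, no such vertex exists, and the iteration does not reduce $\mathfrak{I}_{q_1,\ldots,q_4,\kappa}(\ell)$ to an exact product of two-point integrals. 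The paper handles exactly these cases (Lemma \ref{lemmaI-3}, Case 3, and Lemmas \ref{lemmaI-4}, \ref{lemmaI-5}) by a Cauchy--Schwarz splitting of the edge set into two tree-like halves, each evaluated via Proposition \ref{GegProp}; note also that in the $R=4$, $k_{12}=k_{34}=0$ case this yields exactly $O(\ell^{-3})$, not ``strictly more decay.'' So your conclusion for the dense cases is correct, but the mechanism you name does not deliver it; you need the Cauchy--Schwarz fallback there.
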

The proof of (\ref{estd2}) is quite long and heavy, so we split our analysis in five different lemmas built according to the number of the $k_{ij}$'s in $\kappa$ that are not null, taking advantage of some symmetry properties. As an immediate consequence, the proof Proposition \ref{prop-I} will follow from the auxiliary lemmas \ref{lemmaI-1}--\ref{lemmaI-5} we are going to introduce.

We set
\begin{equation}\label{R}
R_{q_1,\ldots,q_4,\kappa}=\#\{k_{ij}: k_{ij}\neq 0, i<j \}
\end{equation}
and we divide the cases in accordance with $R_{q_1,\ldots,q_4,\kappa}$. Clearly, $R_{q_1,\ldots,q_4,\kappa}\neq 0$ and $R_{q_1,\ldots,q_4,\kappa}\neq 1$, the latter because the 4 equations $\sum_{l=1}^4 k_{il}=q_i-1$, $i=1,\ldots,4$, must be satisfied. So $2\leq \R_{q_1,\ldots,q_4,\kappa}\leq 6$. Notice that if $\kappa\in \mathcal{N}_{q_1-1,\ldots,q_4-1}$ then $R_{q_1,\ldots,q_4,\kappa}=2$.

In the following lemmas, we take $q_1,\ldots,q_4\geq 2$, $\kappa\in \mathcal{A}_{q_1-1,\ldots,q_4-1}$ and we  consider $\mathfrak{I}_{q_1,\ldots, q_4, \kappa}(\ell)$ and $R_{q_1,\ldots,q_4,\kappa}$ defined in \eqref{I} and \eqref{R} respectively.

\begin{lemma}\label{lemmaI-1}
If $R_{q_1,\ldots,q_4,\kappa}=2$ and $\kappa\notin \mathcal{N}_{q_1-1,\ldots,q_4-1}$,
one has
\begin{equation}\label{asymp-R=2}
|\mathfrak{I}_{q_1,\ldots, q_4, \kappa}(\ell)|\leq \frac{C}{\ell^ {3}},
\end{equation}
$C>0$ denoting a constant independent of $q_1,\ldots,q_4$ and of $\kappa$.

\end{lemma}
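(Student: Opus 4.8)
The plan is to read off the combinatorial shape of $\kappa$ forced by $R_{q_1,\dots,q_4,\kappa}=2$ (see \eqref{R}), then reduce the integral \eqref{I} to a product of one-dimensional Gegenbauer moments via the convolution identity of Lemma \ref{B} (and the reproducing property \eqref{G1} in the degenerate cases), and finally bound each moment through Proposition \ref{GegProp}. First I would pin down $\kappa$. Since each defining relation $\sum_{l}k_{il}=q_i-1\geq 1$ forces every node of the extrapolated graph $\mathfrak G_\kappa$ to lie on at least one edge, two nonzero edges can touch all four nodes only if they are disjoint, so $\kappa$ is a perfect matching. The three matchings of $\{1,2,3,4\}$ are $\{(1,2),(3,4)\}$, $\{(1,3),(2,4)\}$ and $\{(1,4),(2,3)\}$: the first is exactly $\mathcal N_{q_1-1,\dots,q_4-1}$ and is excluded, while the remaining two are exchanged by the relabelling $3\leftrightarrow 4$, under which \eqref{I} is invariant. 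Hence I may assume $k_{13}=q_1-1=q_3-1\neq 0$, $k_{24}=q_2-1=q_4-1\neq 0$ and all other $k_{ij}=0$, so that
$$
\mathfrak I_{q_1,\dots,q_4,\kappa}(\ell)=\int_{(\SS2)^4}P_\ell(\<x_1,x_2\>)\,P_\ell(\<x_1,x_3\>)^{k_{13}}\,P_\ell(\<x_2,x_4\>)^{k_{24}}\,P_\ell(\<x_3,x_4\>)\,dx .
$$

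When $k_{13}\geq 2$ and $k_{24}\geq 2$, the swap $3\leftrightarrow 4$ turns this integrand into exactly the one of Lemma \ref{B} with $(q_1,q_2,q_3)=(k_{13},k_{24},1)$, giving
$$
\mathfrak I_{q_1,\dots,q_4,\kappa}(\ell)=c_2\int_{(\SS2)^2}P_\ell(\<x,y\>)^{k_{13}+1}dxdy\int_{(\SS2)^2}P_\ell(\<x,y\>)^{k_{24}+1}dxdy\int_{(\SS2)^2}P_\ell(\<x,y\>)^{2}dxdy .
$$
The key point is that $c_2$ is independent of the exponents, and since $|P_\ell|\leq 1$ one has $\int_{(\SS2)^2}|P_\ell(\<x,y\>)|^m\,dxdy\leq\int_{(\SS2)^2}P_\ell(\<x,y\>)^2\,dxdy=\mu_2^2/n_{\ell;2}=O(\ell^{-1})$ for every $m\geq 2$, with an absolute constant. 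Multiplying the three factors yields $|\mathfrak I_{q_1,\dots,q_4,\kappa}(\ell)|\leq C\ell^{-3}$ uniformly in $q_1,\dots,q_4$ and $\kappa$, which is \eqref{asymp-R=2}.

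It remains to treat the degenerate sub-cases in which Lemma \ref{B} does not apply because an exponent equals $1$; here I would integrate out the offending variable directly through \eqref{G1}. If $k_{13}=1$, node $1$ carries the two unit-power edges $(1,2)$ and $(1,3)$, so \eqref{G1} removes $x_1$, contributes a factor $\mu_2/n_{\ell;2}$ and creates a unit-power edge $(2,3)$; node $3$ then carries the unit-power edges $(2,3)$ and $(3,4)$, and a second application of \eqref{G1} contributes another $\mu_2/n_{\ell;2}$ and collapses the remaining factors into $\int_{(\SS2)^2}P_\ell(\<x,y\>)^{k_{24}+1}dxdy$, giving $|\mathfrak I|\leq(\mu_2/n_{\ell;2})^2\int_{(\SS2)^2}|P_\ell(\<x,y\>)|^{k_{24}+1}dxdy=O(\ell^{-2})\cdot O(\ell^{-1})=O(\ell^{-3})$, and symmetrically if $k_{24}=1$. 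When $k_{13}=k_{24}=1$ (that is $q_1=\dots=q_4=2$) the same two reductions leave the factor $\int_{(\SS2)^2}P_\ell(\<x,y\>)^2dxdy=O(\ell^{-1})$, so $\mathfrak I$ is of the extremal order $\ell^{-3}$. The main obstacle is exactly this bookkeeping: one must verify that the graph structure of $\kappa$ always lets the two extra unit-power factors from \eqref{I} pair up with unit-power edges so that Lemma \ref{B} (or iterated \eqref{G1}) applies, and that every constant produced is uniform in the exponents — which holds because Lemma \ref{B} carries an exponent-free constant and $\int_{(\SS2)^2}|P_\ell(\<x,y\>)|^m\,dxdy$ is dominated by $\int_{(\SS2)^2}P_\ell(\<x,y\>)^2\,dxdy$ for all $m\geq 2$.
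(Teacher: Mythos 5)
Your proof is correct and follows essentially the same route as the paper's: after reducing to one of the two admissible perfect matchings, you handle the generic case via Lemma \ref{B} and the degenerate case (a unit exponent on a matching edge) by iterating the reproducing formula \eqref{G1}, exactly as the paper does. The only difference is that you bound the resulting Gegenbauer moments crudely by $\int_{(\SS2)^2}P_\ell(\<x,y\>)^2\,dxdy=O(\ell^{-1})$ using $|P_\ell|\le 1$, rather than invoking the sharper asymptotics of Proposition \ref{GegProp}; this loses a few powers of $\ell$ the paper's estimate retains but suffices for the stated $O(\ell^{-3})$ bound and makes the uniformity of the constant in $q_1,\dots,q_4$ more transparent.
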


\begin{proof}
If $\kappa\notin \mathcal{N}_{q_1-1,\ldots,q_4-1}$ then either $k_{23},k_{14}\neq 0$ or $k_{13},k_{24}\neq 0$. The estimates in these cases are similar, so we only deal with the former one.

According to  $\kappa=\{k_{ij}\}_{i,j=1}^ 4\in \mathcal{A}_{q_1-1, \ldots, q_4-1}$ in Definition \ref{defA}, if $k_{23}, k_{14} \neq 0$ then $k_{14}=q_1-1=q_4-1$ and $k_{23}=q_2-1=q_3-1$. This implies $q_1=q_4$ and $q_2=q_3$ and we prove that
		\begin{equation*}
		|\mathfrak{I}_{q_1,\ldots, q_4, \kappa}(\ell)|\leq  \frac{C}{\ell^{5}}\times \ell^ {\#\{i\in\{1,2\}\,:\,q_i=2\}}\times (\log \ell) ^ {\#\{i\in\{1,2\}\,:\,q_i=4\}}.
		\end{equation*}
		Hereafter, throughout this section, $C$ denotes a positive constant that may vary from a line to another but is independent of $q_1,\ldots,q_4$ and $\kappa$.
		In this case, $\mathfrak{I}_{q_1,\ldots, q_4, \kappa}(\ell)$ becomes
		\begin{align*}
		\mathfrak{I}_{q_1,\ldots, q_4, \kappa}(\ell)= &\int_{(\SS2)^4} P_\ell(\<x_1,x_2\>) P_\ell(\<x_1,x_4\>)^{q_1-1} P_\ell(\<x_2,x_3\>)^{q_2-1}P_\ell(\<x_3,x_4\>)dx.\\
		\end{align*}
        We start to consider the case $q_1=2$. Using twice the reproducing formula \eqref{G1}, we have
        \begin{align*}
        |\mathfrak{I}_{q_1,\ldots, q_4, \kappa}(\ell)|&=\Big|\int_{(\SS2)^4} P_\ell(\<x_1,x_2\>) P_\ell(\<x_1,x_4\>) P_\ell(\<x_2,x_3\>)^{q_2-1} P_\ell(\<x_3,x_4\>)dx\Big|\\
        &=\Big(\frac{4\pi}{2\ell+1}\Big)^2 \Big|\int_{(\SS2)^4}  P_\ell(\<x_2,x_3\>)^{q_2} dx_2 dx_3\Big|\\
        &\leq \frac{C}{(2\ell+1)^2}\Big( \1_{q_2=2}\frac{1}{2\ell+1}+\1_{q_2=4}\frac{\log \ell}{\ell^2} +\1_{q_2=3, q_2\geq 5} \frac{1}{\ell^2}     \Big)\\
        &\leq \frac C{\ell^3}.
        \end{align*}
		If $q_3=2$ one proceeds similarly. Finally consider the case $q_1, q_2 \geq 3$. From Lemma \ref{B} and Proposition \ref{GegProp}, 		we have
        \begin{align*}
        |\mathfrak{I}_{q_1,\ldots, q_4, \kappa}(\ell)|&=\Big|\int_{(\SS2)^4} P_\ell(\<x_1,x_2\>) P_\ell(\<x_1,x_4\>)^{q_1-1} P_\ell(\<x_2,x_3\>)^{q_2-1} P_\ell(\<x_3,x_4\>)dx\Big|\\
        &= C \int_{(\SS2)^4}P_\ell(\<x_1,x_4\>)^{q_1} dx \int_{(\SS2)^4}  P_\ell(\<x_2,x_3\>)^{q_2}  dx \int_{\SS2} P_\ell(\<x_3, x_4\>)^2 dx\\ 
        &\leq \frac{C}{(2\ell+1)^5}(\log \ell)^{\#\{i=1,2:\, q_i=4\}}.
        \end{align*}		
		The statement now holds.
\end{proof}

\begin{lemma}\label{lemmaI-2}
If $R_{q_1,\ldots,q_4,\kappa}=3$, one has
 \begin{equation}\label{asymp-R=3}
|\mathfrak{I}_{q_1,\ldots, q_4, \kappa}(\ell)|\leq \frac{C}{\ell^ {3}},
\end{equation}
$C>0$ denoting a constant independent of $q_1,\ldots,q_4$ and of $\kappa$.
\end{lemma}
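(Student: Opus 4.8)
The plan is to exploit the rigidity that $R_{q_1,\ldots,q_4,\kappa}=3$ imposes on the extrapolated graph $\mathfrak{G}_\kappa$. Since $q_i\geq 2$ for every $i$, the defining relation $\sum_{j}k_{ij}=q_i-1\geq 1$ from Definition \ref{defA} guarantees that no node of $\mathfrak{G}_\kappa$ is isolated; but a graph on the four nodes $\{1,2,3,4\}$ with exactly three edges and no isolated node is necessarily connected (two or more components would each need at least two nodes and hence force at least $4-1=\dots$ too few edges), so $\mathfrak{G}_\kappa$ is a spanning tree. The only trees on four labelled nodes are the path $P_4$ and the star $K_{1,3}$, so up to the symmetries $1\leftrightarrow 2$, $3\leftrightarrow 4$ and $\{1,2\}\leftrightarrow\{3,4\}$ (which preserve both the class $\mathcal{C}_{q_1-1,\ldots,q_4-1}$ and the two distinguished factors $P_\ell(\langle x_1,x_2\rangle)$, $P_\ell(\langle x_3,x_4\rangle)$ singled out in \eqref{I}) there are only finitely many configurations to examine. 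I would organize them according to how many of $k_{12},k_{34}$ are nonzero, i.e. according to whether the two distinguished edges $(1,2)$ and $(3,4)$ belong to the tree.

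First I would record the decay budget coming from Proposition \ref{GegProp} for $d=2$: the planar two-point integral $\int_{(\SS2)^2}P_\ell(\langle x,y\rangle)^m\,dxdy$ is of order $\ell^{-1}$ for $m=2$ and of order $\ell^{-2}$ (up to a $\log\ell$ when $m=4$) for $m\geq 3$; moreover, integrating out a node of degree two whose two incident exponents both equal one produces, via the reproducing formula \eqref{G1}, a factor $\frac{\mu_2}{n_{\ell;2}}=\frac{4\pi}{2\ell+1}=O(\ell^{-1})$ while merging the two edges. These are the two elementary moves I would iterate. Whenever a configuration matches the hypotheses of Lemma \ref{A_1}, Lemma \ref{B} or Lemma \ref{C} (which require the relevant exponents to be at least $2$, i.e. the corresponding $q_i\geq 3$), I would instead factorize the four-point integral directly into a product of two or three planar two-point integrals, each carrying at least a factor $\ell^{-1}$; in those cases the factors already force $O(\ell^{-4})$ or $O(\ell^{-5})$, comfortably below the target $\ell^{-3}$.

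The delicate configurations are those in which several of the $q_i$ equal $2$, so that many incident exponents collapse to one and the factorization lemmas become unavailable. For these I would peel the tree from its leaves: a leaf carrying exponent $\geq 2$ integrates to a constant of order $\ell^{-1}$ or better, whereas a chain of exponent-one edges is contracted by repeated use of \eqref{G1}, each contraction costing one factor $\ell^{-1}$, until a single planar two-point integral remains. Because the integrand graph is connected on four nodes, any spanning tree has three edges, and the three successive integrations each supply a factor $O(\ell^{-1})$, yielding the bound $O(\ell^{-3})$ in \eqref{asymp-R=3}. The worst case — the star centred at $1$ with $q_2=q_3=q_4=2$, hence $q_1=4$, giving $\int_{(\SS2)^4}P_\ell(\langle x_1,x_2\rangle)^2P_\ell(\langle x_1,x_3\rangle)P_\ell(\langle x_1,x_4\rangle)P_\ell(\langle x_3,x_4\rangle)\,dx$ — attains exactly the order $\ell^{-3}$ (integrate out the leaf $x_2$ for one $\ell^{-1}$, contract the exponent-one apex $x_1$ by \eqref{G1} for a second $\ell^{-1}$, and close up the last two-point integral for a third), which shows the exponent cannot be improved. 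The main obstacle is precisely the bookkeeping of these low-exponent cases: one must verify that in every such configuration three independent factors of $\ell^{-1}$ can indeed be extracted, i.e. that the reproducing-formula contractions never waste an edge, and that the constant $C$ stays uniform in $q_1,\ldots,q_4$ and in $\kappa$ throughout.
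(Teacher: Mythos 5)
Your proposal follows essentially the same route as the paper: classify the admissible configurations of $\kappa$ (your observation that no isolated nodes plus three edges forces $\mathfrak{G}_\kappa$ to be a spanning tree, hence a path or a star, is just a cleaner packaging of the paper's sixteen cases grouped into four families), then bound each representative by combining the reproducing formula \eqref{G1} for exponent-one edges, the factorization Lemmas \ref{A_1}, \ref{B}, \ref{C} when the exponents are at least $2$, and the moment asymptotics of Proposition \ref{GegProp}. Your identification of the extremal star configuration with $q_2=q_3=q_4=2$, $q_1=4$ attaining exactly the order $\ell^{-3}$ is consistent with the paper's computation of the symmetric instance, and the remaining bookkeeping you defer is precisely what the paper's case-by-case verification carries out.
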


\begin{proof}
$R_{q_1,\ldots,q_4,\kappa}=3$ gives 20 different cases, but thanks to some symmetry properties, we can group them in 5 main cases.

First, let us observe that there are instances that are not in accordance with the condition $\kappa= \{k_{ij}\}_{i,j=1}^ 4\in \mathcal{A}_{q_1-1, \ldots, q_4-1}$:
\begin{multicols}{2}
	\begin{enumerate}[label=(\alph*)]
			\item $k_{23},k_{24},k_{34}\neq 0$;
			\item $ k_{13},k_{14},k_{34}\neq 0$; 
			\item $k_{12}, k_{14}, k_{24}\neq 0$;  
			\item $k_{12},k_{13},k_{23}\neq 0$.
		\end{enumerate}
\end{multicols}
So, we proceed with the cases that can be really verified.

\smallskip

\noindent
\textbf{Case 1:} we assume here that there exists $i=1,2,3,4$ where $k_{ij}\neq 0$ for every $j\neq i$, 
 that is,
\begin{multicols}{2}
		\begin{enumerate}[label=(\alph*)]
	\item $k_{12}, k_{13}, k_{14}\neq 0$;
	\item $k_{12}, k_{23}, k_{24}\neq 0$; 
	\item $k_{13}, k_{23}, k_{34}\neq 0$;
	\item $k_{14}, k_{24}, k_{34}\neq 0$.
\end{enumerate}		 
\end{multicols}

Figure \ref{R=3-case1} shows all non flat diagrams with exactly 3 non null edges satisfying the above conditions.

\begin{figure}[h]
	{\footnotesize 	\begin{center}
		\begin{tikzpicture}
		\node[shape=circle,draw](A) at (1,3) {$1$};
		\node[shape=circle,draw](B) at (3,3) {$2$};
		\node[shape=circle,draw](C) at (3,1) {$3$};
		\node[shape=circle,draw](D) at (1,1) {$4$};
		\draw (A) -- (B);
		\draw (A) -- (D);
		\draw (A) -- (C);
		\draw (2,0) node{(a)};
		\node[shape=circle,draw](A) at (4.5,3) {$1$};
		\node[shape=circle,draw](B) at (6.5,3) {$2$};
		\node[shape=circle,draw](C) at (6.5,1) {$3$};
		\node[shape=circle,draw](D) at (4.5,1) {$4$};
		\draw (B) -- (A);
		\draw (B) -- (C);
		\draw (B) -- (D);
		\draw (5.5,0) node{(b)};
		\node[shape=circle,draw](A) at (8,3) {$1$};
		\node[shape=circle,draw](B) at (10,3) {$2$};
		\node[shape=circle,draw](C) at (10,1) {$3$};
		\node[shape=circle,draw](D) at (8,1) {$4$};
		\draw (C) -- (A);
		\draw (C) -- (B);
		\draw (C) -- (D);
		\draw (9,0) node{(c)};
		\node[shape=circle,draw](A) at (11.5,3) {$1$};
		\node[shape=circle,draw](B) at (13.5,3) {$2$};
		\node[shape=circle,draw](C) at (13.5,1) {$3$};
		\node[shape=circle,draw](D) at (11.5,1) {$4$};
		\draw (D) -- (A);
		\draw (D) -- (B);
		\draw (D) -- (C);
		\draw (12.5,0) node{(d)};
		\end{tikzpicture}
	\end{center}
}
	\caption{\small $R_{q_1,\ldots,q_4,\kappa}=3$, Case 1}\label{R=3-case1}
\end{figure}
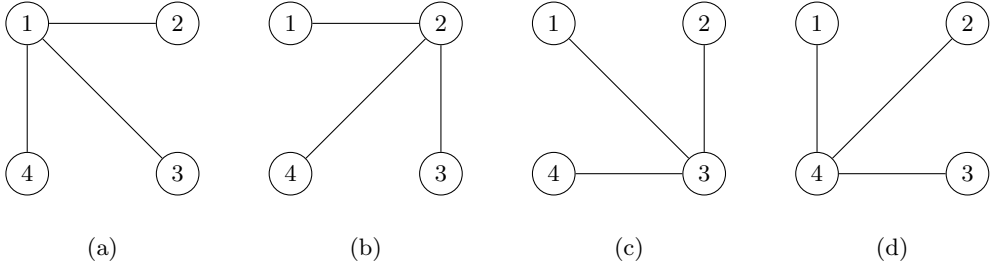

		Instances (a)--(d) can all be studied in a similar way thanks to symmetry properties, so we consider the latter, that is,  $k_{14}, k_{24}, k_{34}\neq 0$, and we prove that
		\begin{equation}\label{asymp2}
			|\mathfrak{I}_{q_1,\ldots, q_4, \kappa}(\ell)|\leq \frac{C}{\ell^{6}}\times\frac{\ell^{\#\{i=1,2,3\,:\,q_i =2\}}}{\ell^{\1_{q_3=2, q_1\neq 2,q_2\neq 2}} }\times (\log \ell)^{\#\{1=1,2\,:\, q_i=4\} +\1_{q_3=3}}          .
		\end{equation}

		The constraints on the $k_{ij}$'s imply that $k_{14}=q_1-1$, $k_{24}= q_2-1$ and $k_{34}=q_3-1$, hence
		$$
		\mathfrak{I}_{q_1,\ldots, q_4, \kappa}(\ell)=\int_{(\SS2)^4} P_\ell(\<x_1, x_2\>)P_\ell(\<x_1, x_4\>)^{q_1-1}P_\ell(\<x_2, x_4\>)^{q_2-1}P_\ell(\<x_3, x_4\>)^{q_3}dx.
		$$
		Applying Lemma \ref{B} and Proposition \ref{GegProp}, for $q_1,q_2\geq 3$ and $q_3\geq 2$ we obtain, 
		\begin{align*}
		|\mathfrak{I}_{q_1,\ldots, q_4, \kappa}(\ell)|&\leq C\, \big|\int_{(\SS2)^2} P_\ell(\<x,y\>)^{q_1} dxdy\, \int_{(\SS2)^2}P_\ell(\<x,y\>)^{q_2} dxdy\,\int_{(\SS2)^2} P_\ell(\<x,y\>)^{q_3+1} dxdy\big| \\
		& \leq  \frac{C}{\ell^{6}}(\log \ell)^{\#\{i=1,2 : q_i=4\}+\1_{q_3=3}}
		\end{align*} 
		and \eqref{asymp2} holds.		
		If $q_1=q_2=q_3=2$ we have
		\begin{align*}
		|\mathfrak{I}_{q_1,\ldots, q_4, \kappa}(\ell)|&=\big|\int_{(\SS2)^3}P_\ell(\<x_2, x_4\>)P_\ell(\<x_3, x_4\>)^2dx_2 dx_3 dx_4 \int_{\SS2}P_\ell(\<x_1, x_2\>)P_\ell(\<x_1, x_4\>)dx_1 \big|\\
		&=\frac{C}{(2\ell+1)^3}
		\end{align*}
		and \eqref{asymp2} follows.
		If $q_1=q_3=2$ and $q_2\geq 3$ (or $q_2=q_3=2$ and $q_1\geq 3$)	we have	
		\begin{align*}
		|\mathfrak{I}_{q_1,\ldots, q_4, \kappa}(\ell)|&=\big|\int_{(\SS2)^3}P_\ell(\<x_2, x_4\>)^{q_2-1}P_\ell(\<x_3, x_4\>)^2 dx_2 dx_3 dx_4 \int_{\SS2}P_\ell(\<x_1, x_2\>)P_\ell(\<x_1, x_4\>)dx_1 \big|\\
		&=\frac{C}{2\ell+1}\big|\int_{(\SS2)^3}P_\ell(\<x_2, x_4\>)^{q_2}P_\ell(\<x_3, x_4\>)^2 dx_2 dx_3 dx_4  \big|\\
		&\leq \frac{C}{\ell^{4}}(\log \ell)^{\1_{q_2=4}} 
		\end{align*}		
		and \eqref{asymp2} again holds.	
		If $q_1=2$ and $q_2,q_3\geq 3$ (or $q_2=2$ and $q_1,q_3\geq 3$)		
		\begin{align*}
		&|\mathfrak{I}_{q_1,\ldots, q_4, \kappa}(\ell)|=\big|\int_{(\SS2)^3}P_\ell(\<x_2, x_4\>)^{q_2-1}P_\ell(\<x_3, x_4\>)^{q_3}dx_2 dx_3 dx_4 \int_{\SS2}P_\ell(\<x_1, x_2\>)P_\ell(\<x_1, x_4\>)dx_1 \big|\\
		&=\frac{C}{2\ell+1}\big|\int_{(\SS2)^3}P_\ell(\<x_2, x_4\>)^{q_2}P_\ell(\<x_3, x_4\>)^{q_3} dx_2 dx_3 dx_4  \big|\\
		&\leq \frac{C}{\ell^{5}} (\log \ell)^{\#\{i=2,3\,:\, q_i=4\}},
		\end{align*}	
		which gives \eqref{asymp2}.		
		Finally, \eqref{asymp2} holds also if $q_3=2$ and $q_1,q_2\geq 3$: applying Lemma \ref{B} we have
		\begin{align*}
		&|\mathfrak{I}_{q_1,\ldots, q_4, \kappa}(\ell)|=\big|\int_{(\SSd)^4} P_\ell(\<x_1, x_2\>)P_\ell(\<x_1, x_4\>)^{q_1-1}P_\ell(\<x_2, x_4\>)^{q_2-1}P_\ell(\<x_3, x_4\>)^2dx\big|\\
		&\leq \frac{C}{\ell^{6}}(\log \ell)^{\#\{i=1,2\,:\, q_i=4\}}  .
		\end{align*}	
		
		The remaining cases (a)--(c) produce an estimate similar to \eqref{asymp2}. As a consequence, \eqref{asymp-R=2} holds in \textbf{Case 1}.

\smallskip

\noindent
\textbf{Case 2:} we deal now with the case when there exists just one connection between the pairs $(1,2)$ and $(3,4)$, that is,
\begin{multicols}{2}
		\begin{enumerate}[label=(\alph*)]
			\item $k_{12}, k_{14}, k_{23}\neq 0$;
			\item $k_{12}, k_{13}, k_{24}\neq 0$;
			\item $k_{34}, k_{13}, k_{24}\neq 0$;
			\item $k_{34}, k_{14}, k_{23}\neq 0$.
		\end{enumerate}  
\end{multicols}
This is graphically shown in Figure 	\ref{R=3-case2}.
\begin{figure}[h]
{\footnotesize 	\begin{center}
		\begin{tikzpicture}
		\node[shape=circle,draw](A) at (1,3) {$1$};
		\node[shape=circle,draw](B) at (3,3) {$2$};
		\node[shape=circle,draw](C) at (3,1) {$3$};
		\node[shape=circle,draw](D) at (1,1) {$4$};
		\draw (A) -- (B);
		\draw (A) -- (D);
		\draw (B) -- (C);
		\draw (2,0) node{(a)};
		\node[shape=circle,draw](A) at (4.5,3) {$1$};
		\node[shape=circle,draw](B) at (6.5,3) {$2$};
		\node[shape=circle,draw](C) at (6.5,1) {$3$};
		\node[shape=circle,draw](D) at (4.5,1) {$4$};
		\draw (A) -- (B);
		\draw (A) -- (C);
		\draw (B) -- (D);
		\draw (5.5,0) node{(b)};
		\node[shape=circle,draw](A) at (8,3) {$1$};
		\node[shape=circle,draw](B) at (10,3) {$2$};
		\node[shape=circle,draw](C) at (10,1) {$3$};
		\node[shape=circle,draw](D) at (8,1) {$4$};
		\draw (C) -- (A);
		\draw (B) -- (D);
		\draw (C) -- (D);
		\draw (9,0) node{(c)};
		\node[shape=circle,draw](A) at (11.5,3) {$1$};
		\node[shape=circle,draw](B) at (13.5,3) {$2$};
		\node[shape=circle,draw](C) at (13.5,1) {$3$};
		\node[shape=circle,draw](D) at (11.5,1) {$4$};
		\draw (D) -- (A);
		\draw (C) -- (B);
		\draw (D) -- (C);
		\draw (12.5,0) node{(d)};
		\end{tikzpicture}
	\end{center}
}
	\caption{\small $R_{q_,\ldots,q_4,\kappa}=3$, Case 2}\label{R=3-case2}
\end{figure}
		We study the instance (d), that is $k_{34}, k_{14}, k_{23}\neq 0$.  As $\kappa= \{k_{ij}\}_{i,j=1}^ 4\in \mathcal{A}_{q_1-1, \ldots, q_4-1}$, it follows that $k_{14}=q_1-1$ and $k_{23}=q_2-1$ and we e notice that $k_{34}$ can be equal to $1$.  We have
		\begin{align*}
		&\mathfrak{I}_{q_1,\ldots, q_4, \kappa}(\ell)=\int_{(\SS2)^4} P_\ell(\<x_1, x_2\>)  P_\ell(\<x_1, x_4\>)^{q_1-1}P_\ell(\<x_2, x_3\>)^{q_2-1}P_\ell(\<x_3, x_4\>)^{k_{34}+1}dx
		\end{align*}
		and we prove that
		\begin{equation}\label{asymp3}
		|\mathfrak{I}_{q_1,\ldots, q_4, \kappa}(\ell)|\leq \frac{C_2}{\ell^{6}} \times \ell^{\#\{i=1,2\,:\, q_i=2\}} \,(\log \ell)^{\#\{i=1,2\,:\, q_i=4\}+\1_{k_{34}=2}}.
		\end{equation}
		For $q_1, q_2\geq 3$, applying Lemma \ref{B}, we have immediately
		\begin{align*}
		|\mathfrak{I}_{q_1,\ldots, q_4, \kappa}(\ell)|\leq \frac{C}{\ell^{6}} \times \ell^{\#\{i=1,2\,:\, q_i=2\}} \, (\log \ell)^{\#\{i=1,2\,:\, q_i=4\}+\1_{k_{34}=2}} .
		\end{align*}		
		
		If $q_1=q_2=2$ we have
		\begin{align*}
		|\mathfrak{I}_{q_1,\ldots, q_4, \kappa}(\ell)|&=\big|\int_{(\SS2)^3} P_\ell(\<x_2, x_3\>) P_\ell(\<x_3, x_4\>)^{k_{34}+1}dx_2 dx_3 dx_4 \int_{\SS2}P_\ell(\<x_1, x_2\>)  P_\ell(\<x_1, x_4\>)dx_1\big|\\
		&=\Big(\frac{C}{2\ell+1}\Big)^2\big|\int_{(\SSd)^2} P_\ell(\<x_3, x_4\>)^{k_{34}+2} dx_3 dx_4 \big|
		\leq \frac{C}{\ell^{4}}\, (\log \ell )^{\1_{k_{34}=2}},
		\end{align*}
		and if $q_1=2$ and $q_2\geq 3$ (or $q_2=2$ and $q_1\geq 3$) we can apply Lemma \ref{I} and we have
		\begin{align*}
		|\mathfrak{I}_{q_1,\ldots, q_4, \kappa}(\ell)|&=\big|\int_{(\SSd)^3} P_\ell(\<x_2, x_3\>)^{q_2-1} P_\ell(\<x_3, x_4\>)^{k_{34}+1}dx_2 dx_3 dx_4 \int_{\SSd}P_\ell(\<x_1, x_2\>)  P_\ell(\<x_1, x_4\>)dx_1\big|\\
		&=\frac{C}{2\ell+1}\big|\int_{(\SS2)^3} P_\ell(\<x_2, x_3\>)^{q_2-1}P_\ell(\<x_2, x_4\>) P_\ell(\<x_3, x_4\>)^{k_{34}+1}dx_2 dx_3 dx_4\big|\\
		&\leq \frac{C}{\ell^{5}}\,(\log \ell)^{\1_{q_2=4}+\1_{k_{34}=2}}.
		\end{align*}
		Then \eqref{asymp3} holds.
		
		By symmetry, the cases (a)--(c) give a similar estimate. By resuming, \eqref{asymp-R=2} is true also in \textbf{Case 2}.

\smallskip
		
\noindent
\textbf{Case 3:} we assume now that either $k_{12}=0$ and $k_{34}=0$, as displayed in Figure \ref{R=3-case3}. 
	\begin{figure}[h]
		{\footnotesize 	\begin{center}
				\begin{tikzpicture}
				\node[shape=circle,draw](A) at (1,3) {$1$};
				\node[shape=circle,draw](B) at (3,3) {$2$};
				\node[shape=circle,draw](C) at (3,1) {$3$};
				\node[shape=circle,draw](D) at (1,1) {$4$};
				\draw (A) -- (D);
				\draw (B) -- (C);
				\draw (B) -- (D);
				\draw (2,0) node{(a)};
				\node[shape=circle,draw](A) at (4.5,3) {$1$};
				\node[shape=circle,draw](B) at (6.5,3) {$2$};
				\node[shape=circle,draw](C) at (6.5,1) {$3$};
				\node[shape=circle,draw](D) at (4.5,1) {$4$};
				\draw (A) -- (D);
				\draw (B) -- (C);
				\draw (A) -- (C);
				\draw (5.5,0) node{(b)};
				\node[shape=circle,draw](A) at (8,3) {$1$};
				\node[shape=circle,draw](B) at (10,3) {$2$};
				\node[shape=circle,draw](C) at (10,1) {$3$};
				\node[shape=circle,draw](D) at (8,1) {$4$};
				\draw (A) -- (C);
				\draw (B) -- (D);
				\draw (A) -- (D);
				\draw (9,0) node{(c)};
				\node[shape=circle,draw](A) at (11.5,3) {$1$};
				\node[shape=circle,draw](B) at (13.5,3) {$2$};
				\node[shape=circle,draw](C) at (13.5,1) {$3$};
				\node[shape=circle,draw](D) at (11.5,1) {$4$};
				\draw (A) -- (C);
				\draw (B) -- (D);
				\draw (B) -- (C);
				\draw (12.5,0) node{(d)};
				\end{tikzpicture}
			\end{center}
		}
		\caption{\small $R_{q_1,\ldots,q_4,\kappa}=3$, Case 3}\label{R=3-case3}
	\end{figure}

\noindent
This means that
\begin{multicols}{2}
		\begin{enumerate}[label=(\alph*)]
	\item $k_{14}, k_{23}, k_{24}\neq 0$;
	\item $k_{13}, k_{14}, k_{23}\neq 0$;
	\item $k_{13}, k_{14}, k_{24}\neq 0$;
	\item $k_{13}, k_{23}, k_{24}\neq 0$.
\end{enumerate}
\end{multicols}	
		We study the instance (a), that is $k_{14}, k_{23}, k_{24}\neq 0$. Since $\kappa\in \mathcal{A}_{q_1-1,\ldots,q_4-1}$, one gets $k_{14}=q_1-1$ and $k_{23}=q_3-1$. So
		\begin{align*}
		&\mathfrak{I}_{q_1,\ldots, q_4, \kappa}(\ell)=\int_{(\SS2)^4} P_\ell(\<x_1, x_2\>)P_\ell(\<x_1, x_4\>)^{q_1-1}P_\ell(\<x_2, x_3\>)^{q_3-1}P_\ell(\<x_2, x_4\>)^{k_{24}}P_\ell(\<x_3, x_4\>) dx.
		\end{align*}
		We prove that
		\begin{equation}\label{asymp4}
		|\mathfrak{I}_{q_1,\ldots, q_4, \kappa}(\ell)|\leq \frac{C}{\ell^{6}} \times \ell^{\#\{i=1,3\,:\, q_i=2\}} \, (\log \ell)^{\#\{i=1,3\,:\, q_i=4\}+\1_{k_{24}=2}}.
		\end{equation}
		For $q_1, q_3 \geq 3$, applying Lemma \ref{C}, we immediately have
		\begin{align*}
		|\mathfrak{I}_{q_1,\ldots, q_4, \kappa}(\ell)|\leq \frac{C}{\ell^{6}}\,(\log \ell )^{\#\{i=1,3\,:\, q_i=4 \}+\1_{k_{24}=2}}.
		\end{align*}
		If $q_1=q_3 =2$ we have 
		\begin{align*}
		|\mathfrak{I}_{q_1,\ldots, q_4, \kappa}(\ell)|&=\Big|\int_{(\SS2)^2} \int_{\SSd} P_\ell(\<x_1, x_2\>)P_\ell(\<x_1, x_4\>) dx_1 \int_{\SS2} P_\ell(\<x_2, x_3\>) P_\ell(\<x_3, x_4\>) dx_3 \times\\
		&\times P_\ell(\<x_2, x_4\>)^{k_{24}}  dx_2 dx_4\Big|\leq \frac{C}{\ell^{4}} \,(\log \ell )^{\1_{k_{24}=2}}
		\end{align*}
		and \eqref{asymp4} holds.
		If $q_1=2$ and $q_3\geq 3$ (or $q_3=2$ and $q_1\geq 3$), applying Lemma \ref{I}, we have
		\begin{align*}
		&|\mathfrak{I}_{q_1,\ldots, q_4, \kappa}(\ell)|=\Big|\int_{(\SS2)^3} \int_{\SSd} P_\ell(\<x_1, x_2\>)P_\ell(\<x_1, x_4\>) dx_1 P_\ell(\<x_2, x_3\>)^{q_3-1} P_\ell(\<x_3, x_4\>)\times\\
		&\times P_\ell(\<x_2, x_4\>)^{k_{24}}  dx_2 dx_3  dx_4\Big|\\
		&\leq \frac{4\pi}{2\ell+1}\Big|\int_{(\SS2)^3}  P_\ell(\<x_2, x_3\>)^{q_3-1}  P_\ell(\<x_2, x_4\>)^{k_{24}+1} P_\ell(\<x_3, x_4\>) dx_2 dx_3 dx_4\Big|\\
		&\leq \frac{C}{\ell^{5}} \,(\log \ell )^{\1_{q_3=4}+\1_{k_{24}=2}}
		\end{align*}
		and \eqref{asymp4} holds. Again, similar estimates can be produced if (b)--(d) are true. As a consequence, the estimate 
	\eqref{asymp-R=2} holds in \textbf{Case 3} as well.

\smallskip

\noindent
\textbf{Case 4:}  we finally assume that $k_{12},k_{34}\neq 0$, as in Figure \ref{R=3-case4}, that is,
\begin{multicols}{2}
		\begin{enumerate}[label=(\alph*)]
			\item $k_{12}, k_{24}, k_{34}\neq 0$;
			\item $k_{12}, k_{13}, k_{34}\neq 0$;
			\item $k_{12}, k_{23}, k_{34}\neq 0$;
			\item $k_{12}, k_{14}, k_{34}\neq 0$.
		\end{enumerate}
\end{multicols}
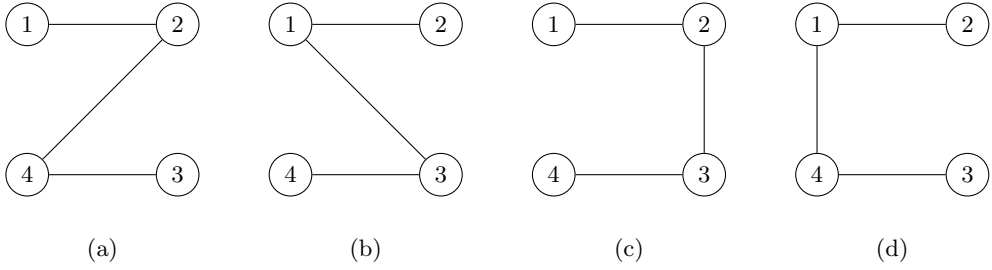
\begin{figure}[h]
{\footnotesize 	\begin{center}
		\begin{tikzpicture}
		\node[shape=circle,draw](A) at (1,3) {$1$};
		\node[shape=circle,draw](B) at (3,3) {$2$};
		\node[shape=circle,draw](C) at (3,1) {$3$};
		\node[shape=circle,draw](D) at (1,1) {$4$};
		\draw (A) -- (B);
		\draw (C) -- (D);
		\draw (B) -- (D);
		\draw (2,0) node{(a)};
		\node[shape=circle,draw](A) at (4.5,3) {$1$};
		\node[shape=circle,draw](B) at (6.5,3) {$2$};
		\node[shape=circle,draw](C) at (6.5,1) {$3$};
		\node[shape=circle,draw](D) at (4.5,1) {$4$};
		\draw (A) -- (B);
		\draw (C) -- (D);
		\draw (A) -- (C);
		\draw (5.5,0) node{(b)};
		\node[shape=circle,draw](A) at (8,3) {$1$};
		\node[shape=circle,draw](B) at (10,3) {$2$};
		\node[shape=circle,draw](C) at (10,1) {$3$};
		\node[shape=circle,draw](D) at (8,1) {$4$};
		\draw (A) -- (B);
		\draw (C) -- (D);
		\draw (B) -- (C);
		\draw (9,0) node{(c)};
		\node[shape=circle,draw](A) at (11.5,3) {$1$};
		\node[shape=circle,draw](B) at (13.5,3) {$2$};
		\node[shape=circle,draw](C) at (13.5,1) {$3$};
		\node[shape=circle,draw](D) at (11.5,1) {$4$};
		\draw (A) -- (B);
		\draw (C) -- (D);
		\draw (A) -- (D);
		\draw (12.5,0) node{(d)};
		\end{tikzpicture}
	\end{center}
}
	\caption{\small  $R=3$, Case 4}\label{R=3-case4}
\end{figure}

We study  $k_{12}, k_{24}, k_{34}\neq 0$. As $\kappa=\{k_{ij}\}_{i,j=1}^ 4\in\mathcal{A}_{q_1-1,\ldots,q_4-1}$, we have $k_{12}=q_1-1$, $k_{34}=q_3-1$. Then
		\begin{align*}
		&\mathfrak{I}_{q_1,\ldots, q_4, \kappa}(\ell)=\int_{(\SS2)^4} P_\ell(\<x_1, x_2\>)^{q_1}P_\ell(\<x_3, x_4\>)^{q_3} P_\ell(\<x_2, x_4\>)^{k_{24}}dx\\
		&= \int_{(\SS2)^2} P_\ell(\<x, y\>)^{q_1}dxdy \int_{(\SS2)^2} P_\ell(\<x, y\>)^{q_3}dxdy \int_{(\SS2)^2} P_\ell(\<x, y\>)^{k_{24}}dxdy. 
		\end{align*}
		If $k_{24}=1$, $I$ is equal to $0$. By using Proposition \ref{GegProp}, we easily obtain 
		\begin{align*}
		|\mathfrak{I}_{q_1,\ldots, q_4, \kappa}(\ell)|\leq \frac{C}{\ell^{6}} \,\1_{k_{24}\neq 1} \times \ell^{\#\{i=1,3\,:\, q_i=2\}+\1_{k_{24}=2}} (\log \ell)^{\#\{ i=1,3\,:\, q_i=4\}+\1_{k_{24}=4}} .
		\end{align*}
		The other instances are similar and the following general estimate holds in \textbf{Case 3}:
		\begin{equation}\label{asymp5-1}
		\mathfrak{I}_{q_1,\ldots, q_4, \kappa}(\ell)\leq 
		\frac{C}{\ell^ {3}},
        \end{equation}
and the proof is concluded.    
	
\end{proof}

\begin{lemma}\label{lemmaI-3}
	
If $R_{q_1,\ldots,q_4,\kappa}=4$, one has
 \begin{equation}\label{asymp-R=4}
|\mathfrak{I}_{q_1,\ldots, q_4, \kappa}(\ell)|\leq \frac{C}{\ell^ {3}},
\end{equation}
	$C>0$ denoting a constant independent of $q_1,\ldots,q_4$ and of $\kappa$.

\end{lemma}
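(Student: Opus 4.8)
The plan is to follow the scheme of Lemmas \ref{lemmaI-1} and \ref{lemmaI-2}: enumerate the admissible $\kappa=\{k_{ij}\}_{i,j=1}^4\in\mathcal{C}_{q_1-1,\ldots,q_4-1}$ with $R_{q_1,\ldots,q_4,\kappa}=4$, cut down the number of genuinely distinct configurations using the symmetries of \eqref{I}, and in each surviving case peel $\mathfrak{I}_{q_1,\ldots,q_4,\kappa}(\ell)$ into a product of one- and two-point moments of $P_\ell$ estimated through Proposition \ref{GegProp}. First I would record the combinatorics. Since $q_i\ge 2$ forces $\sum_j k_{ij}=q_i-1\ge 1$, every node of the extrapolated graph $\mathfrak{G}_\kappa$ has degree at least one; a simple graph on four nodes with four edges and no isolated vertex is connected and unicyclic, so $\mathfrak{G}_\kappa$ is, up to relabelling, either a $4$-cycle or a paw (a triangle with one pendant edge). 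The group generated by the transpositions $1\leftrightarrow 2$, $3\leftrightarrow 4$ and the swap $\{1,2\}\leftrightarrow\{3,4\}$, which preserves the two distinguished factors $P_\ell(\<x_1,x_2\>)$ and $P_\ell(\<x_3,x_4\>)$ in \eqref{I}, then collapses the three cyclic and twelve paw labellings to a few representatives.

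For each representative I would extract three factors, each of order $\ell^{-1}$. When $\mathfrak{G}_\kappa$ is a $4$-cycle I would rename the integration variables so that a unit-exponent edge sits in the position $(x_1,x_2)$; the integrand then matches the template of Lemma \ref{B}, which rewrites $\mathfrak{I}_{q_1,\ldots,q_4,\kappa}(\ell)$ as $c_d\prod_{i=1}^3\int_{(\SS2)^2}P_\ell(\<x,y\>)^{m_i}\,dxdy$ with every $m_i\ge 2$. If instead the cycle carries no unit exponent (all edges $\ge 2$), a direct Cauchy--Schwarz step combined with \eqref{stimaGRAFI} already gives decay at least $\ell^{-3}$. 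When $\mathfrak{G}_\kappa$ is a paw, the pendant variable occurs in a single factor: integrating it out yields a one-point moment $\int_{\SS2}P_\ell(\<x,y\>)^{m}\,dx$ and leaves a triangle integral on the three remaining variables, which I would reduce either by Lemma \ref{A_1} (when two of its edges have exponent $\ge 2$) or, when an edge has exponent one, by first applying the reproducing formula \eqref{G1} and then a single two-point moment. In every branch Proposition \ref{GegProp} supplies the sizes: for $d=2$ one has $\int_{(\SS2)^2}P_\ell^2\sim c/\ell$ and $\int_{(\SS2)^2}P_\ell^{m}=O(\ell^{-2})$ for $m\ge 3$ (with an extra $\log\ell$ only at $m=4$), the one-point moments obey the same bounds, and each use of \eqref{G1} contributes the factor $\mu_d/n_{\ell;d}\sim c/\ell$.

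The count then closes the argument. The integrand graph $\mathfrak{H}=\mathfrak{G}_\kappa\cup\{(1,2),(3,4)\}$ is connected on four nodes, so every peeling produces at least three independent factors of order $\ell^{-1}$, whence $|\mathfrak{I}_{q_1,\ldots,q_4,\kappa}(\ell)|=O(\ell^{-3})$; any factor carrying a $\log\ell$ is a moment of order $m=4$ and hence already $O(\ell^{-2})$, so the logarithms ride on strictly more decay and never spoil the bound, while a genuine leaf of unit exponent makes $\int_{\SS2}P_\ell(\<x,y\>)\,dx=0$ and kills the term outright. The main obstacle is entirely organizational: keeping the symmetry reduction honest so that all fifteen labellings really collapse to the two graph-types, and within each type matching the unit-exponent edges to the precise hypotheses of Lemmas \ref{A_1}, \ref{B} and \ref{C} (which demand the remaining exponents be $\ge 2$) as against the sub-cases with some $q_i=2$, where one must invoke \eqref{G1} before any convolution identity becomes usable. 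Because $R_{q_1,\ldots,q_4,\kappa}=4$ already forces more edges than the tight case $R=2$, the powers of $\ell$ comfortably beat $\ell^{-3}$ and no delicate cancellation is required---only careful bookkeeping.
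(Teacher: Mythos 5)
Your overall architecture is the same as the paper's: enumerate the $\binom{6}{4}=15$ admissible edge-supports, reduce by the symmetries preserving the two distinguished factors, and estimate each representative by combining the convolution lemmas (Lemmas \ref{A_1}, \ref{B}, \ref{C}), the reproducing formula \eqref{G1}, Cauchy--Schwarz, and the moment asymptotics of Proposition \ref{GegProp}. Your classification of $\mathfrak{G}_\kappa$ into $4$-cycles and paws is correct and is a tidier way of organizing what the paper does in its Cases 1--3.

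However, there is a genuine gap: your per-type reductions are formulated for $\mathfrak{G}_\kappa$ alone and forget that the integrand in \eqref{I} always carries the two extra unit-exponent factors $P_\ell(\<x_1,x_2\>)P_\ell(\<x_3,x_4\>)$ on top of the edges of $\mathfrak{G}_\kappa$. This breaks both of your main reduction steps. (i) For the $4$-cycle $k_{13},k_{14},k_{23},k_{24}\neq 0$ the integrand is supported on all six edges of $K_4$, so no relabelling of variables makes it match the four-factor template of Lemma \ref{B}; and this is precisely the extremal configuration that saturates the bound $\ell^{-3}$, handled in the paper by a Cauchy--Schwarz split of the six factors into two spanning trees, each contributing $\ell^{-3/2}$. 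Your fallback ``Cauchy--Schwarz plus \eqref{stimaGRAFI}'' is also insufficient for the $4$-cycle through $(1,2)$ and $(3,4)$ with all exponents $\geq 2$: any split of its four edges into two halves leaves each half disconnected, so \eqref{stimaGRAFI} only yields $\ell^{-1}$ per half, i.e.\ $\ell^{-2}$ overall; one must invoke the $q\geq 3$ moments of Proposition \ref{GegProp} instead. (ii) For the eight paws whose pendant edge is not $(1,2)$ or $(3,4)$ (the paper's Case 1), the pendant variable appears in \emph{two} factors, e.g.\ $P_\ell(\<x_1,x_2\>)$ and $P_\ell(\<x_1,x_4\>)^{q_1-1}$, so integrating it out produces the two-point kernel $\int_{\mathbb{S}^2}P_\ell(\<x_1,x_2\>)P_\ell(\<x_1,x_4\>)^{q_1-1}dx_1$ rather than a constant one-point moment; this needs \eqref{G1} when $q_1=2$ and a separate Cauchy--Schwarz argument when $q_1\geq 3$, neither of which your paw step provides. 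Your closing count (``every peeling produces three factors of order $\ell^{-1}$'') cannot repair this, because $\mathfrak{H}=\mathfrak{G}_\kappa\cup\{(1,2),(3,4)\}$ has at least five edges on four nodes and hence a nontrivial $2$-core that cannot be peeled leaf by leaf; indeed, since every vertex lies on $(1,2)$ or $(3,4)$, a degree-one vertex of $\mathfrak{H}$ of unit exponent never occurs, so the vanishing one-point moment you invoke is vacuous.
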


\begin{proof}
We split the proof in 3 different cases according to the properties of $\kappa\in \mathcal{A}_{q_1-1, \ldots, q_4-1}$.

\smallskip

\noindent
\textbf{Case 1:} there exists just one connection either the pair $(1,2)$ or $(3,4)$:
\begin{multicols}{2}
		\begin{enumerate}[label=(\alph*)]
	    \item $k_{14},k_{23}, k_{24}, k_{34}\neq 0$;
		\item $k_{13},k_{23}, k_{24}, k_{34}\neq 0$;
		\item $k_{13},k_{14}, k_{24}, k_{34}\neq 0$;
		\item $k_{13},k_{14}, k_{23}, k_{34}\neq 0$;
		\item $k_{12},k_{14}, k_{23}, k_{24}\neq 0$;
		\item $k_{12},k_{13}, k_{23}, k_{24}\neq 0$;
		\item $k_{12},k_{13}, k_{14}, k_{24}\neq 0$;
		\item $k_{12},k_{13}, k_{14}, k_{23}\neq 0$.
	\end{enumerate}
\end{multicols}
Figure \ref{R=4-case1} shows the cases (a)--(d) where $k_{34}\neq 0$, the latter (e)--(h) turning out by changing the role of the pairs $(3,4)$ and $(1,2)$.  

\begin{center}
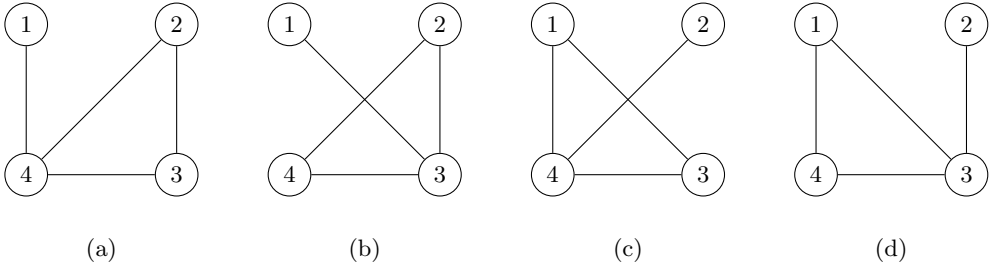
\begin{figure}[h]
	{\footnotesize 	\begin{center}
			\begin{tikzpicture}
			\node[shape=circle,draw](A) at (1,3) {$1$};
			\node[shape=circle,draw](B) at (3,3) {$2$};
			\node[shape=circle,draw](C) at (3,1) {$3$};
			\node[shape=circle,draw](D) at (1,1) {$4$};
			\draw (C) -- (D);
			\draw (A) -- (D);
			\draw (B) -- (D);
			\draw (B) -- (C);
			\draw (2,0) node{(a)};
			\node[shape=circle,draw](A) at (4.5,3) {$1$};
			\node[shape=circle,draw](B) at (6.5,3) {$2$};
			\node[shape=circle,draw](C) at (6.5,1) {$3$};
			\node[shape=circle,draw](D) at (4.5,1) {$4$};
			\draw (C) -- (D);
			\draw (A) -- (C);
			\draw (B) -- (D);
			\draw (B) -- (C);
			\draw (5.5,0) node{(b)};
			\node[shape=circle,draw](A) at (8,3) {$1$};
			\node[shape=circle,draw](B) at (10,3) {$2$};
			\node[shape=circle,draw](C) at (10,1) {$3$};
			\node[shape=circle,draw](D) at (8,1) {$4$};
			\draw (C) -- (D);
			\draw (A) -- (D);
			\draw (B) -- (D);
			\draw (A) -- (C);
			\draw (9,0) node{(c)};
			\node[shape=circle,draw](A) at (11.5,3) {$1$};
			\node[shape=circle,draw](B) at (13.5,3) {$2$};
			\node[shape=circle,draw](C) at (13.5,1) {$3$};
			\node[shape=circle,draw](D) at (11.5,1) {$4$};
			\draw (C) -- (D);
			\draw (A) -- (D);
			\draw (A) -- (C);
			\draw (B) -- (C);
			\draw (12.5,0) node{(d)};
			\end{tikzpicture}
		\end{center}
	}
	\caption{\small  $R_{q_1,\ldots,q_4,\kappa}=4$, Case 1, (a)--(d); for items (e)--(h), just inverting the roles between the pair of nodes $(1,2)$ and $(3,4)$. }\label{R=4-case1}
\end{figure}
\end{center}

We suppose that $k_{14},k_{23}, k_{24}, k_{34} \neq 0$, the other instances being similar. We notice that, in according to $\kappa=\{k_{ij}\}_{i,j=1}^ 4\in \mathcal{A}_{q_1-1,\ldots,q_4-1}$, $k_{14}=q_1-1$
and we can have $k_{23},k_{24},k_{34}\geq 1$. 

When $q_1=2$ we have
\begin{align*}
|\mathfrak{I}_{q_1,\ldots, q_4, \kappa}(\ell)|&=\Big|\int_{(\SS2)^4}P_\ell(\<x_1, x_2\>) P_\ell(\<x_1, x_4\>) P_\ell(\<x_2, x_3\>)^{k_{23}}   P_\ell(\<x_2, x_4\>)^{k_{24}}P_\ell(\<x_3, x_4\>)^{k_{34}+1} dx\Big|\\
&=\frac{4\pi}{2\ell+1}\Big|\int_{(\SS2)^4} P_\ell(\<x_2, x_3\>)^{k_{23}}   P_\ell(\<x_2, x_4\>)^{k_{24}+1} P_\ell(\<x_3, x_4\>)^{k_{34}+1} dx_2 dx_3 dx_4\Big|.
\end{align*}
If $k_{23 }=1$, from Lemma \ref{A}, $|\mathfrak{I}_{q_1,\ldots, q_4, \kappa}(\ell)|\leq \frac{C}{\ell^{5}}(\log \ell )^{\1_{k_{24}=2}+\1_{k_{34}=2}} $, whereas for $k_{23}>1$, we have $|\mathfrak{I}_{q_1,\ldots, q_4, \kappa}(\ell)|\leq \frac{C}{\ell^4}(\sqrt{\log \ell})^{\1_{k_{23}=2}+\1_{k_{24}=1}+\1_{k_{34}=1}}$.
When $q_1\geq 3$, by applying the Cauchy Schwarz inequality, we have
\begin{align*}
&|\mathfrak{I}_{q_1,\ldots, q_4, \kappa}(\ell)|\\
&=\Big|\int_{(\SS2)^4}P_\ell(\<x_1, x_2\>) P_\ell(\<x_1, x_4\>)^{q_1-1} P_\ell(\<x_2, x_3\>)^{k_{23}}   P_\ell(\<x_2, x_4\>)^{k_{24}}P_\ell(\<x_3, x_4\>)^{k_{34}+1} dx\Big|\\
&\leq \int_{(\SS2)^4}| P_\ell(\<x_1, x_2\>) P_\ell(\<x_1, x_4\>)^2 P_\ell(\<x_2, x_3\>)   P_\ell(\<x_2, x_4\>) P_\ell(\<x_3, x_4\>)^{2}| dx\\
&\leq \Big(\int_{(\SS2)^4}P_\ell(\<x_1, x_2\>)^2 P_\ell(\<x_1, x_4\>)^4 P_\ell(\<x_2, x_3\>)^2 dx\Big)^{\frac{1}{2}}\Big( \int_{(\SS2)^4} P_\ell(\<x_2, x_4\>)^2 P_\ell(\<x_3, x_4\>)^{4}dx\Big)^{\frac{1}{2}}\\
&\leq \frac{C_2}{\ell^3 \sqrt{\ell}}\log \ell.
\end{align*}

We can resume by stating the following estimate:
\begin{equation}\label{asymp7-0}
|\mathfrak{I}_{q_1,\ldots, q_4, \kappa}(\ell)|\leq \frac{C}{\ell^3 \sqrt{\ell}}\log \ell.
\end{equation}

\noindent	
\textbf{Case 2:} we assume that $k_{12}, k_{34}\neq 0$ as in Figure \ref{R=4-case2}, that is,
\begin{multicols}{2}
	\begin{enumerate}[label=(\alph*)]
        \item $k_{12},k_{23}, k_{24}, k_{34}\neq 0$;	
	    \item $k_{12},k_{13}, k_{14}, k_{34}\neq 0$;
		\item $k_{12},k_{14}, k_{24}, k_{34}\neq 0$;
		\item $k_{12},k_{13}, k_{23}, k_{34}\neq 0$;
		\item $k_{12},k_{13}, k_{24}, k_{34}\neq 0$;
		\item $k_{12},k_{14}, k_{23}, k_{34}\neq 0$.
	\end{enumerate}
\end{multicols}

\begin{center}
	\begin{figure}[h]
		{\footnotesize 	\begin{center}
				\begin{tikzpicture}
				\node[shape=circle,draw](A) at (1,3) {$1$};
				\node[shape=circle,draw](B) at (3,3) {$2$};
				\node[shape=circle,draw](C) at (3,1) {$3$};
				\node[shape=circle,draw](D) at (1,1) {$4$};
				\draw (A) -- (B);
				\draw (C) -- (D);
				\draw (B) -- (D);
				\draw (B) -- (C);
				\draw (2,0) node{(a)};
				\node[shape=circle,draw](A) at (4.5,3) {$1$};
				\node[shape=circle,draw](B) at (6.5,3) {$2$};
				\node[shape=circle,draw](C) at (6.5,1) {$3$};
				\node[shape=circle,draw](D) at (4.5,1) {$4$};
				\draw (A) -- (B);
                \draw (C) -- (D);
				\draw (A) -- (C);
				\draw (A) -- (D);
				\draw (5.5,0) node{(b)};
				\node[shape=circle,draw](A) at (8,3) {$1$};
				\node[shape=circle,draw](B) at (10,3) {$2$};
				\node[shape=circle,draw](C) at (10,1) {$3$};
				\node[shape=circle,draw](D) at (8,1) {$4$};
				\draw (A) -- (B);
				\draw (C) -- (D);
				\draw (A) -- (D);
				\draw (B) -- (D);
				\draw (9,0) node{(c)};
				\end{tikzpicture}

				\begin{tikzpicture}
\node[shape=circle,draw](A) at (1,3) {$1$};
\node[shape=circle,draw](B) at (3,3) {$2$};
\node[shape=circle,draw](C) at (3,1) {$3$};
\node[shape=circle,draw](D) at (1,1) {$4$};
\draw (A) -- (B);
\draw (C) -- (D);
\draw (A) -- (C);
\draw (B) -- (C);
\draw (2,0) node{(d)};
\node[shape=circle,draw](A) at (4.5,3) {$1$};
\node[shape=circle,draw](B) at (6.5,3) {$2$};
\node[shape=circle,draw](C) at (6.5,1) {$3$};
\node[shape=circle,draw](D) at (4.5,1) {$4$};
\draw (A) -- (B);
\draw (C) -- (D);
\draw (A) -- (C);
\draw (B) -- (D);
\draw (5.5,0) node{(e)};
\node[shape=circle,draw](A) at (8,3) {$1$};
\node[shape=circle,draw](B) at (10,3) {$2$};
\node[shape=circle,draw](C) at (10,1) {$3$};
\node[shape=circle,draw](D) at (8,1) {$4$};
\draw (A) -- (B);
\draw (C) -- (D);
\draw (A) -- (D);
\draw (B) -- (C);
\draw (9,0) node{(f)};
\end{tikzpicture}
			\end{center}
		}
		\caption{\small  $R_{q_1,\ldots,q_4,\kappa}=4$, Case 2. }\label{R=4-case2}
	\end{figure}
\end{center}

We study the case $k_{12},k_{23}, k_{24}, k_{34}\neq 0$. 
Then
\begin{align*}
	&\mathfrak{I}_{q_1,\ldots, q_4, \kappa}(\ell)=\int_{(\SSd)^4}P_\ell(\<x_1, x_2\>)^{q_1}  P_\ell(\<x_2, x_3\>)^{k_{23}}   P_\ell(\<x_2, x_4\>)^{k_{24}} P_\ell(\<x_3, x_4\>)^{k_{34}+1}dx\\
	& =\int_{\SSd}P_\ell(\<x_1, x_2\>)^{q_1}dx_1 \int_{(\SSd)^3} P_\ell(\<x_2, x_3\>)^{k_{23}} P_\ell(\<x_2, x_4\>)^{k_{24}} P_\ell(\<x_3, x_4\>)^{k_{34}+1}dx_2dx_3dx_4
	\end{align*}
If $k_{23}=1$, from Lemma \ref{A} we have $|\mathfrak{I}_{q_1,\ldots, q_4, \kappa}(\ell)|\leq \frac{C}{\ell^{4}}(\log \ell)^{\1_{q_1=4}+\1_{k_{24}=3}+\1_{k_{34}=2}}$. Changing the role of $k_{23}$ and $k_{24}$ we obtain the same estimate. 
If $k_{23}, k_{24}>1$, we have $|\mathfrak{I}_{q_1,\ldots, q_4, \kappa}(\ell)|\leq \frac{C}{\ell^4}(\log \ell)^{\1_{q_1=4}+\1_{k_{23}=2}+\1_{k_{24}=2}+\1_{k_{34}=1}}$.  
	
Resuming, we have 
\begin{equation}\label{asymp7-1}
|\mathfrak{I}_{q_1,\ldots, q_4, \kappa}(\ell)|\leq \frac{C}{\ell^4}(\log \ell)^4.
\end{equation}

\noindent
\textbf{Case 3:} we consider when $k_{12}=k_{34}=0$, giving that $k_{13},k_{14}, k_{23}, k_{24}\neq 0$. We have that
\begin{align*}
&|\mathfrak{I}_{q_1,\ldots, q_4, \kappa}(\ell)|\\
&\leq \int_{(\SS2)^4} |P_\ell(\<x_1,x_2\>)P_\ell(\<x_1,x_3\>)^{k_{13}}P_\ell(\<x_1,x_4\>)^{k_{14}}P_\ell(\<x_2,x_3\>)^{k_{23}}P_\ell(\<x_2,x_4\>)^{k_{24}}P_\ell(\<x_3,x_4\>)| dx\\
&\leq  \Big(\int_{(\SS2)^4} P_\ell(\<x_1,x_2\>)^2 P_\ell(\<x_1,x_3\>)^2 P_\ell(\<x_2,x_4\>)^2 dx  \int_{(\SS2)^4} P_\ell(\<x_1,x_4\>)^2 P_\ell(\<x_2,x_3\>)^2 P_\ell(\<x_3,x_4\>)^2 dx\Big)^{\frac{1}{2}}\\
&\leq \frac{C}{\ell^3}.
\end{align*}
Then, by considering also \eqref{asymp7-0} and \eqref{asymp7-1}, the estimate in \eqref{asymp-R=4} holds.

\end{proof}

\begin{lemma}\label{lemmaI-4}
If $R_{q_1,\ldots,q_4,\kappa}=5$, one has
\begin{equation}\label{asymp-R=5}
|\mathfrak{I}_{q_1,\ldots, q_4, \kappa}(\ell)|\leq \frac{C}{\ell^3 \sqrt{\ell}}\log \ell,
\end{equation}
	$C>0$ denoting a constant independent of $q_1,\ldots,q_4$ and of $\kappa$.
	
\end{lemma}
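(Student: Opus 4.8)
The plan is to estimate $\mathfrak{I}_{q_1,\ldots,q_4,\kappa}(\ell)$ by the Cauchy--Schwarz inequality, splitting its integrand into a factor supported on a spanning path of the four points $x_1,\ldots,x_4$ and a complementary factor that still carries a complete four-cycle. Recall from \eqref{I} that the integrand is $\prod_{i<j}P_\ell(\<x_i,x_j\>)^{k_{ij}}$ multiplied by the two extra factors $P_\ell(\<x_1,x_2\>)$ and $P_\ell(\<x_3,x_4\>)$, so that the \emph{effective} exponent is $k_{12}+1$ on the pair $(1,2)$, $k_{34}+1$ on $(3,4)$, and $k_{ij}$ on every remaining pair. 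Since $R_{q_1,\ldots,q_4,\kappa}=5$, exactly one $k_{ij}$ vanishes. Exploiting the symmetries $x_1\leftrightarrow x_2$, $x_3\leftrightarrow x_4$ and $\{x_1,x_2\}\leftrightarrow\{x_3,x_4\}$ of $\mathfrak{I}$, I would reduce to two representative cases: the vanishing index is one of the distinguished pairs, say $k_{12}=0$, or a cross pair, say $k_{13}=0$. In both cases every one of the six pairs carries effective exponent $\ge 1$ (when $k_{12}=0$ the factor $P_\ell(\<x_1,x_2\>)$ still survives), the pair $(3,4)$ --- respectively both $(1,2)$ and $(3,4)$ --- having effective exponent $\ge 2$.

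Two moment estimates, both consequences of Proposition \ref{GegProp} and \eqref{G1}, are the building blocks. For a spanning path, integrating out its two endpoints and then one interior point through $\int_{\SS2}P_\ell(\<x,y\>)^2\,dx=\tfrac{4\pi}{2\ell+1}$ gives
\[
\int_{(\SS2)^4}P_\ell(\<x_a,x_b\>)^2P_\ell(\<x_b,x_c\>)^2P_\ell(\<x_c,x_d\>)^2\,dx\le \frac{C}{\ell^{3}}.
\]
For a four-cycle $x_a-x_b-x_c-x_d-x_a$, integrating out the two opposite vertices $x_a,x_c$ and bounding each single-variable integral by $\int_{\SS2}P_\ell(\<x,y\>)^2P_\ell(\<x,z\>)^2\,dx\le\int_{\SS2}P_\ell(\<x,y\>)^4\,dx=\tfrac{12\log\ell}{\pi\ell^{2}}(1+o(1))$ (Cauchy--Schwarz and \eqref{asLegq}) gives
\[
\int_{(\SS2)^4}P_\ell(\<x_a,x_b\>)^2P_\ell(\<x_b,x_c\>)^2P_\ell(\<x_c,x_d\>)^2P_\ell(\<x_d,x_a\>)^2\,dx\le \frac{C(\log\ell)^{2}}{\ell^{4}}.
\]
Since $|P_\ell|\le 1$, both right-hand sides persist unchanged if any exponent is raised above $2$ or if extra $P_\ell$-factors are appended to the product.

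I would then perform the splitting. When $k_{12}=0$, I write the integrand as $f\,g$ with $f=P_\ell(\<x_1,x_3\>)P_\ell(\<x_3,x_4\>)P_\ell(\<x_2,x_4\>)$, corresponding to the spanning path $x_1-x_3-x_4-x_2$ (its three edges have effective exponents $k_{13},k_{34}+1,k_{24}\ge 1$); subtracting it leaves in $g$, with effective exponent $\ge 1$, the four pairs $(1,2),(2,3),(3,4),(4,1)$, i.e.\ the four-cycle $x_1-x_2-x_3-x_4-x_1$. When $k_{13}=0$, I take $f=P_\ell(\<x_1,x_2\>)P_\ell(\<x_2,x_4\>)P_\ell(\<x_4,x_3\>)$, the spanning path $x_1-x_2-x_4-x_3$, and $g$ again retains the four-cycle on $(1,2),(2,3),(3,4),(4,1)$. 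In either case Cauchy--Schwarz yields $|\mathfrak{I}_{q_1,\ldots,q_4,\kappa}(\ell)|\le(\int_{(\SS2)^4} f^2\,dx)^{1/2}(\int_{(\SS2)^4} g^2\,dx)^{1/2}$, and the two building blocks bound the factors by $(C/\ell^{3})^{1/2}$ and $(C(\log\ell)^2/\ell^{4})^{1/2}$, so that
\[
|\mathfrak{I}_{q_1,\ldots,q_4,\kappa}(\ell)|\le \frac{C\log\ell}{\ell^{7/2}}=\frac{C}{\ell^{3}\sqrt{\ell}}\log\ell,
\]
which is \eqref{asymp-R=5}.

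I expect the main obstacle to be the bookkeeping created by the pairs of effective exponent exactly $1$: such a factor cannot be shared between $f$ and $g$ but must be placed entirely on one side, so the spanning path has to be routed so that, after its removal, the pairs surviving in $g$ still contain a full four-cycle for \emph{every} admissible $\kappa$. The explicit choices above show this can always be done, and the resulting constant $C$ depends only on the universal constants of Proposition \ref{GegProp}, hence is independent of $q_1,\ldots,q_4$ and of $\kappa$.
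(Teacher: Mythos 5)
Your proof is correct and follows essentially the same route as the paper's: a Cauchy--Schwarz splitting of the product of Legendre polynomials into two groups, each bounded via the second and fourth moment asymptotics of Proposition \ref{GegProp}, yielding the bound $C\ell^{-7/2}\log\ell$ in both representative cases. Your uniform path-versus-four-cycle split is cleaner bookkeeping than the paper's slightly different ad hoc groupings in the two cases, and the only (harmless) inaccuracy is the parenthetical claim that all six pairs have effective exponent $\ge 1$ when a cross pair such as $k_{13}$ vanishes --- that pair then has effective exponent $0$, but your chosen spanning path and four-cycle never use it, so the argument is unaffected.
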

\begin{proof}
First we notice that we can distinguish two different instances: the first one is when $k_{12}, k_{34}\neq 0$, the second one is when $k_{12}=0$ or $k_{34}=0$ .
We start from the first case. Without loss of generality, we can suppose that $k_{13}=0$. Hence, by using the Cauchy Schwarz inequality and Proposition \ref{GegProp},  
\begin{align*}
&|\mathfrak{I}_{q_1,\ldots, q_4, \kappa}(\ell)|\\
&=\Big|\int_{(\SS2)^4} P_\ell(\<x_1, x_2\>)^{k_{12}+1} P_\ell(\<x_1, x_4\>)^{k_{14}}P_\ell(\<x_2, x_3\>)^{k_{23}}P_\ell(\<x_2, x_4\>)^{k_{24}}P_\ell(\<x_3, x_4\>)^{k_{34}+1} \Big|dx\\
&\leq \mbox{\small $\Big(\int_{(\SS2)^4} P_\ell(\<x_1, x_2\>)^{4} P_\ell(\<x_1, x_4\>)^{2}P_\ell(\<x_2, x_3\>)^{2} dx\Big)^{\frac 12}\Big( \int_{(\SS2)^4}P_\ell(\<x_2, x_4\>)^{2}P_\ell(\<x_3, x_4\>)^{4}dx \Big)^{\frac{1}{2}}$}\\
&\leq \frac{C}{\ell^3 \sqrt{\ell}}\log \ell.
\end{align*}

Now we study $k_{12}=0$.  If $q_1=q_2=3$, again by the Cauchy Schwarz inequality and Proposition \ref{GegProp},
\begin{align*}
&|\mathfrak{I}_{q_1,\ldots, q_4, \kappa}(\ell)|\\
&\leq \int_{(\SS2)^4}| P_\ell(\<x_1, x_2\>) P_\ell(\<x_1, x_3\>) P_\ell(\<x_1, x_4\>) P_\ell(\<x_2, x_3\>)  P_\ell(\<x_2, x_4\>)| P_\ell(\<x_3, x_4\>)^2  dx\\
&\leq \mbox{\small $\Big(\int_{(\SS2)^4} P_\ell(\<x_1, x_2\>)^2 P_\ell(\<x_1, x_3\>)^2 P_\ell(\<x_3, x_4\>)^4 dx\Big)^{\frac 12} \Big(\int_{(\SS2)^4} P_\ell(\<x_1, x_4\>)^2 P_\ell(\<x_2, x_3\>)^2  P_\ell(\<x_2, x_4\>)^2 dx\Big)^{\frac{1}{2}}$}\\
&\leq \frac{C}{\ell^3 \sqrt{\ell}}\log \ell.
\end{align*}
The other cases can be handled in the same way and bring to a faster decay, and \eqref{asymp-R=5} follows.

\end{proof}
\begin{lemma}\label{lemmaI-5}
If $R_{q_1,\ldots,q_4,\kappa}=6$, one has
\begin{equation}\label{asymp-R=6}
|\mathfrak{I}_{q_1,\ldots, q_4, \kappa}(\ell)|\leq \frac{C}{\ell^4}\log \ell,
\end{equation}
	$C>0$ denoting a constant independent of $q_1,\ldots,q_4$ and of $\kappa$.
\end{lemma}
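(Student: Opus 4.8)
The plan is to exploit that $R_{q_1,\ldots,q_4,\kappa}=6$ forces every off-diagonal index to be strictly positive, $k_{ij}\ge 1$ for all $i<j$, so that the extrapolated graph is the complete graph on $\{1,2,3,4\}$. First I would pass to absolute values and shave all exponents down to their minimal useful size by means of the trivial bound $|P_\ell|\le 1$: since $k_{12}+1\ge 2$ and $k_{34}+1\ge 2$, while each cross exponent satisfies $k_{13},k_{14},k_{23},k_{24}\ge 1$, one gets the nonnegative majorant
\begin{equation*}
|\mathfrak{I}_{q_1,\ldots,q_4,\kappa}(\ell)|\le \int_{(\SS2)^4} P_\ell(\<x_1,x_2\>)^2\, P_\ell(\<x_3,x_4\>)^2 \prod_{\substack{i\in\{1,2\}\\ j\in\{3,4\}}} |P_\ell(\<x_i,x_j\>)|\,dx.
\end{equation*}
Next I would split the integrand as $|A|\,|B|$ with
\begin{equation*}
A=P_\ell(\<x_1,x_2\>)P_\ell(\<x_1,x_3\>)P_\ell(\<x_2,x_4\>)P_\ell(\<x_3,x_4\>),\qquad B=P_\ell(\<x_1,x_2\>)P_\ell(\<x_1,x_4\>)P_\ell(\<x_2,x_3\>)P_\ell(\<x_3,x_4\>),
\end{equation*}
and apply the Cauchy--Schwarz inequality. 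Both $\int A^2$ and $\int B^2$ are integrals of squared Legendre polynomials along a $4$-cycle through all four nodes with all exponents equal to $2$, and they coincide by the symmetry $x_3\leftrightarrow x_4$ of the measure; calling this common value
\begin{equation*}
J:=\int_{(\SS2)^4} P_\ell(\<x_1,x_2\>)^2 P_\ell(\<x_2,x_3\>)^2 P_\ell(\<x_3,x_4\>)^2 P_\ell(\<x_4,x_1\>)^2\,dx,
\end{equation*}
I would obtain the reduction $|\mathfrak{I}_{q_1,\ldots,q_4,\kappa}(\ell)|\le J$.

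The heart of the argument is then to evaluate $J$ sharply, and this is where I expect the only real difficulty to lie: a naive further Cauchy--Schwarz that decouples the cycle into two factors $\int_{(\SS2)^2}P_\ell^4$ yields only $J=O\big((\log\ell)^2/\ell^4\big)$, one logarithm too many. To recover the stated single logarithm I would instead expand the square via \eqref{sviluppo}, $P_\ell(t)^2=\sum_{j\ge 0}\gamma_{j,\ell;2}\,P_j(t)$, and integrate out the two alternate nodes $x_2,x_4$ using the reproducing identity \eqref{G1} in its mixed-degree form $\int_{\SS2}P_j(\<a,z\>)P_{j'}(\<z,b\>)\,dz=\1_{j=j'}\tfrac{4\pi}{2j+1}P_j(\<a,b\>)$. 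Integrating finally in $x_1,x_3$ by orthogonality gives the closed form
\begin{equation*}
J=(4\pi)^4\sum_{j\ge 0}\frac{\gamma_{j,\ell;2}^{\,4}}{(2j+1)^3}=(4\pi)^4\sum_{j\ge 0}\frac{\gamma_{j,\ell;2}^{\,2}}{2j+1}\cdot\frac{\gamma_{j,\ell;2}^{\,2}}{(2j+1)^2}.
\end{equation*}

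To conclude I would bound the two factors separately. From \eqref{gammalr}, $\tfrac{\gamma_{j,\ell;2}}{2j+1}=\tfrac{1}{2\pi}\int_{-1}^1 P_\ell(t)^2P_j(t)\,dt$, so $|P_j|\le 1$ and $\int_{-1}^1P_\ell^2\,dt=\tfrac{2}{2\ell+1}$ give the uniform estimate $\tfrac{\gamma_{j,\ell;2}^{\,2}}{(2j+1)^2}\le \tfrac{1}{\pi^2(2\ell+1)^2}$, independent of $j$. On the other hand, Parseval applied to $P_\ell^2$ identifies $\sum_{j\ge0}\tfrac{\gamma_{j,\ell;2}^{\,2}}{2j+1}=\tfrac{1}{(4\pi)^2}\int_{(\SS2)^2}P_\ell(\<x,y\>)^4\,dxdy$, which by \eqref{asLegq} in Proposition \ref{GegProp} is $\sim \tfrac{3\log\ell}{\pi^2\ell^2}$. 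Combining the two bounds yields $J\le C\,\ell^{-4}\log\ell$, and hence \eqref{asymp-R=6}. The main obstacle is precisely this passage to the spectral sum together with the uniform control of $\gamma_{j,\ell;2}^{\,2}/(2j+1)^2$: it is what separates the sharp bound from the lossy $(\log\ell)^2$ one, and it is the step that genuinely uses the Gaunt-type machinery (Lemmas \ref{A_1}--\ref{C}) rather than the crude graph estimate of Lemma \ref{STIMA}.
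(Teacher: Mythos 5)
Your argument is correct and reaches the stated bound, but it diverges from the paper's proof at the decisive step. The paper uses the same initial reduction (lowering all exponents via $|P_\ell|\leq 1$ to get $P_\ell(\<x_1,x_2\>)^2P_\ell(\<x_3,x_4\>)^2$ times the four cross factors to the first power) and also a single Cauchy--Schwarz, but it splits the integrand \emph{asymmetrically}, as $A=P_\ell(\<x_1,x_2\>)^2P_\ell(\<x_1,x_3\>)P_\ell(\<x_2,x_4\>)$ and $B=P_\ell(\<x_1,x_4\>)P_\ell(\<x_2,x_3\>)P_\ell(\<x_3,x_4\>)^2$. Then $\int A^2=\int P_{12}^4P_{13}^2P_{24}^2$ has two degree-one nodes ($x_3$ and $x_4$), which integrate out to $\big(\tfrac{4\pi}{2\ell+1}\big)^2$, leaving a single fourth moment $\int_{(\SS2)^2}P_\ell^4=O(\ell^{-2}\log\ell)$ by \eqref{asLegq}; each factor is thus $O\big(\ell^{-4}\log\ell\big)^{1/2}$ and the product gives the single logarithm with no machinery beyond Proposition \ref{GegProp}. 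Your symmetric splitting instead produces the $4$-cycle integral $J$ with all weights $2$, which has no degree-one node and, as you correctly observe, loses a logarithm under any further crude estimate; you then rescue it with the spectral expansion \eqref{sviluppo}, the reproducing identity \eqref{G1}, and Parseval, arriving at the exact formula $J=(4\pi)^4\sum_j\gamma_{j,\ell;2}^4/(2j+1)^3$ and the bound $\sup_j\gamma_{j,\ell;2}^2/(2j+1)^2\cdot\sum_j\gamma_{j,\ell;2}^2/(2j+1)=O(\ell^{-2})\cdot O(\ell^{-2}\log\ell)$. This is a valid and in fact more informative route (the closed form for the cyclic integral is in the spirit of Lemmas \ref{A_1}--\ref{C} and could be reused elsewhere), at the price of being considerably longer than the paper's one-line Cauchy--Schwarz; note only the harmless constant slip in your formula $\gamma_{j,\ell;2}/(2j+1)=\tfrac12\int_{-1}^1P_\ell^2P_j\,dt$ (not $\tfrac1{2\pi}$, by \eqref{gammalr} with $d=2$), which does not affect the order of the final estimate.
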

\begin{proof}
Suppose that $k_{ij}\neq 0$ for every $i\neq j$. From the Cauchy Schwarz inequality and Proposition \ref{GegProp}, it follows that
\begin{align*}
&|\mathfrak{I}_{q_1,\ldots, q_4, \kappa}(\ell)|\\
&\leq \int_{(\SS2)^4}| P_\ell(\<x_1, x_2\>)^2 P_\ell(\<x_1, x_3\>) P_\ell(\<x_1, x_4\>) P_\ell(\<x_2, x_3\>)  P_\ell(\<x_2, x_4\>)| P_\ell(\<x_3, x_4\>)^2 |dx \\
&\leq \mbox{\small $\Big(\int_{(\SS2)^4}P_\ell(\<x_1, x_2\>)^4 P_\ell(\<x_1, x_3\>)^2  P_\ell(\<x_2, x_4\>)^2 dx\Big)^{\frac{1}{2}}\Big( \int_{(\SS2)^4} P_\ell(\<x_1, x_4\>)^2P_\ell(\<x_2, x_3\>)^2 P_\ell(\<x_3, x_4\>)^4 dx\Big)^{\frac{1}{2}}$ }\\
&\leq \frac{C}{\ell^4}\log \ell.
\end{align*}
The other cases are similar, giving also a faster decay, so \eqref{asymp-R=6} holds.
\end{proof}

\subsection{Proof of Proposition \ref{propDIM2}} \label{B3}
We come back to the estimate \eqref{appoggio} which appears in the proof of Proposition \ref{Conv0}. We rewrite it in dimension 2, that is in terms of the Legendre polynomials, and we use the definition of $\mathfrak{I}_{q_1,\ldots, q_4, \kappa}(\ell)$ in \eqref{I}:
\begin{align*}
&\Var(\sigma_\ell )
\leq \frac{1}{v_{\ell;2}^4} \sum_{q_1,q_2, q_3, q_4\geq 2} \prod_{i=1}^ 4\frac{|b_{q_i}|}{(q_i-1)!} \prod_{r=1}^4 (q_r-1)! \times\nonumber\\
&\times  \sum_{\{k_{i,j}\}^4_{{ i,j=1}} \in \mathcal{C}_{q_1-1,\ldots,q_4-1}  } \prod_{\underset{i<j}{i,j=1}}^4\frac{1}{k_{ij}!} \Big|\int_{(\mathbb S^2)^4 }\prod_{\underset{i<j}{i,j=1}}^4 P_\ell(\<x_i,x_j\>)^{k_{ij}} P_{\ell}(\<x_1,x_2\>)P_\ell(\<x_3,x_4\>)dx\Big|\\
&=
\frac{1}{v_{\ell;2}^4} \sum_{q_1,q_2, q_3, q_4\geq 2} \prod_{i=1}^ 4\frac{|b_{q_i}|}{(q_i-1)!} \prod_{r=1}^4 (q_r-1)! \sum_{\{k_{i,j}\}^4_{{ i,j=1}} \in \mathcal{C}_{q_1-1,\ldots,q_4-1}  } \prod_{\underset{i<j}{i,j=1}}^4\frac{1}{k_{ij}!} |\mathfrak{I}_{q_1,\ldots, q_4, \kappa}(\ell)|.
\end{align*} 
By recalling \eqref{v-ell}, by applying the estimate in Proposition \ref{prop-I} and by using \eqref{PassDel}, we get
\begin{align*}
\Var(\sigma_\ell )
&\leq C\ell^ 2 \times  \frac{1}{\ell^3} 
\sum_{q_1,q_2, q_3, q_4\geq 2} \prod_{i=1}^ 4\frac{|b_{q_i}|}{(q_i-1)!} \E\Big[\prod_{r=1}^4 H_{q_r-1}(Z)\Big]\\
&\leq \frac C\ell \times \E\Big[\Big(\sum_{q \geq 2}\frac{|b_q|}{(q-1)!}H_{q-1}(Z)\Big)^ 4\Big].
\end{align*} 
The last quantity in the above r.h.s. is finite due to Assumption \ref{ASSUMPTION}, and the proof is concluded.

\addcontentsline{toc}{section}{References}


\begin{thebibliography}{99}

\bibitem[AT07]{AT} R. Adler, J. Taylor. \textit{Random fields and geometry}. Springer Monographs in Mathematics. Springer, New York, 2007.

\bibitem[AAR99]{AAR99}  E. Andrews, R. Askey, and R. Roy. Special functions, volume 71 of Encyclopedia
of Mathematics and its Applications. Cambridge University Press, Cambridge, 1999

\bibitem[BCP19]{BCPzeri}  V. Bally, L. Caramellino, G.  Poly. Non universality for the variance of the number of real roots of random trigonometric polynomials. \emph{Probab. Theory Relat. Fields} 174, no. 3-4, 887--927, 2019. 

\bibitem[BCP20]{BCP19} V. Bally, L. Caramellino, G. Poly. Regularization lemmas and convergence in total variation. \textit{Electron. J. Probab.},  25, no. 74, 20 pp., 2020

\bibitem[Cam19]{Cam19}  V. Cammarota. Nodal area distribution for arithmetic random waves. \emph{Trans. Amer. Math. Soc.} 372, no. 5, 3539--3564, 2019.


\bibitem[CM18]{CM18}  V. Cammarota, D. Marinucci. A quantitative central limit theorem for the Euler-Poincaré characteristic of random spherical eigenfunctions. \emph{Ann. Probab.} 46, no. 6, 3188–3228, 2018.

\bibitem[CNN20]{CNN20}  M.-C. Chang, H. Nguyen, O. Nguyen, V. Vu. Random eigenfunctions on flat tori: universality for the number of intersections. \emph{Int. Math. Res. Not. IMRN}, no. 24, 9933–9973, 2020. 

\bibitem[Dur16]{Dur16} C. Durastanti. Adaptive global thresholding on the sphere. \emph{J. Multivar. Anal.} 151, 110--132, 2016.

\bibitem[Mar08]{Mar08} D. Marinucci. A central limit theorem and higher order results for the angular bispectrum. \textit{Probab. Theory Relat. Fields} 141(3-4):389–409, 2008.

\bibitem[MP11]{MPbook} D. Marinucci, G. Peccati. \textit{Random fields on the sphere: representations, limit theorems and cosmological applications}. London Mathematical Society Lecture Notes, Cambridge University Press, Cambridge, 2011.

\bibitem[MPRW16]{MPRW16} D. Marinucci, G. Peccati, M. Rossi, I. Wigman.
Non-universality of nodal length distribution for arithmetic random waves. 
\emph{Geom. Funct. Anal.} 26, no. 3, 926–960, 2016. 

\bibitem[MR15]{ROS20} D. Marinucci, M. Rossi. Stein-Malliavin Approximation for Nonlinear Functionals of Random Eigenfunctions on $\mathbb{S}^d$. \textit{J. Funct. Anal.} 268, no. 8, 2379–2420, 2015. 

\bibitem[MRW20]{MRW20}   D. Marinucci, M. Rossi, I. Wigman. The asymptotic equivalence of the sample trispectrum and the nodal length for random spherical harmonics. \emph{Ann. Inst. Henri Poincar\'e Probab. Stat.} 56, no. 1, 374–390, 2020. 


\bibitem[MW11]{MW11} D. Marinucci, I. Wigman. The defect variance of random spherical harmonics. \textit{J. Phys. A. Mathematical and Theoretical}, 44, no. 35,  2011.

\bibitem[MW14]{MW14} D. Marinucci, I. Wigman. On nonlinear functionals of random spherical eigenfunctions. \textit{Comm. Math. Phys.}, 327, no. 3, 849--872, 2014.

\bibitem[NP12]{NP12} I. Nourdin, G. Peccati. \textit{Normal approximations with Malliavin calculus: from Stein’s method to universality}, Cambridge University Press,  2012.

\bibitem[NPR09]{NPR09} I. Nourdin, G. Peccati, G. Reinert. Second order Poincaré inequalities and CLTs on Wiener space. \emph{J. Funct. Anal.} 257(2), 593–609, 2009.


\bibitem[Nua06]{Nua} D. Nualart. \textit{The Malliavin calculus and related topics. Second edition}. Springer-Verlag, Berlin, 2006.

\bibitem[Ros19]{Ros19} M. Rossi. The defect of random hyperspherical harmonics. \textit{J. Theoret. Probab.}, 32, no. 4, 2135--2165, 2019.



\bibitem[Sze39]{Sze39} G. Szeg\"o. \textit{Orthogonal polynomials}. American Mathematical Society Providence, Rhode Island 1939.




\bibitem[Tod19]{Tod19} A.P. Todino. A Quantitative Central Limit Theorem for the Excursion Area of Random Spherical Harmonics over Subdomains of S2. \textit{J. Math. Phys.} 60 (2), 2019.



\bibitem[vdH17]{Va17} R. van der Hofstad. \emph{Random graphs and complex networks}. Vol. 1. Cambridge Series in Statistical and Probabilistic Mathematics, [43]. Cambridge University Press, Cambridge, 2017. 

\bibitem[Vid20]{Vid19} A. Vidotto.  An improved second order Poincaré inequality for functionals of Gaussian fields,  \emph{J. Theoret. Probab.},  33, no. 1, 396--427, 2020.

\bibitem[VK93]{VK93} N. Ja. Vilenkin, A.U. Klimyk. \textit{Representation of Lie groups and special functions.} Vol. 2, volume 74 of Mathematics and its Applications (Soviet Series). Kluwer Academic Publishers Group, Dordrecht, 1993.


\end{thebibliography}
\end{document}